\documentclass[letterpaper,10pt,reqno,english]{amsart}
\usepackage{babel}
\usepackage{amsmath,amsthm,amssymb,amsfonts,indentfirst}
\usepackage{color}
\usepackage[title]{appendix}
\usepackage[colorlinks]{hyperref}
\hypersetup{linkcolor=blue,citecolor=blue,filecolor=black,urlcolor=blue}

\addtolength{\hoffset}{-1.4cm} \addtolength{\topmargin}{-0.8cm}
\addtolength{\textheight}{1.5cm} \addtolength{\textwidth}{3.0cm}

\newtheorem{theo}{Theorem}

\newtheorem{lem}{Lemma}
\newtheorem{prop}{Proposition}
\newtheorem{claim}{Claim}
\theoremstyle{remark}

\numberwithin{equation}{section} \numberwithin{theo}{section}
\numberwithin{cor}{section} \numberwithin{lem}{section}
\numberwithin{prop}{section} \numberwithin{claim}{section}
\numberwithin{obs}{section} \numberwithin{dfn}{section}

\newcommand\R{\text{I\!R}}
\newcommand\N{\text{I\!N}} 

\newcommand\Q{\mathbb{Q}}

\newcommand\e{\epsilon}
\newcommand\de{\delta}
\newcommand\be{\beta}
\newcommand\al{\alpha}

\newcommand{\Om}{\Omega}
\newcommand{\oen}{\Omega_{\e_n}}
\newcommand{\fr}{\partial}

\newcommand{\grad}{\nabla}
\newcommand{\ml}{\mathcal}

\newcommand{\sm}{\setminus}
\newcommand{\la}{\lambda}
\newcommand{\st}{such that }
\newcommand{\dem}{\bf Proof:}

\newcommand\lap{\Delta}

\newcommand\ti{\tilde}

\newcommand{\lf}{\left}
\newcommand{\rg}{\right}
\renewcommand\({\left(}
\renewcommand\){\right)}

\newcommand\ds{\displaystyle}

\newcommand{\bebs}{\begin{equation*}\begin{split}}
\newcommand{\ee}{\end{equation*}}
\newcommand{\esp}{\end{split}}
\newcommand\ms{\medskip}
\newcommand\bs{\bigskip}

\DeclareMathAlphabet{\mathpzc}{OT1}{pcz}{m}{it}


\begin{document} 
\title[Sign-changing bubble tower solutions 
on pierced domains
]{Sign-changing bubble tower solutions for sinh-Poisson type equations on pierced domains}
\author[P. Figueroa]{Pablo Figueroa}
\address{Pablo Figueroa
\newline \indent Instituto de Ciencias Físicas y Matemáticas
\newline \indent Facultad de  Ciencias 
\newline \indent Universidad Austral de Chile
\newline \indent Campus Isla Teja s/n, Valdivia, Chile} 
\email{pablo.figueroa@uach.cl}

\date{\today}

\subjclass[2020]{35B44, 35J25, 35J60}

\keywords{sinh-Poisson type equation, pierced domain, tower of bubbles}

\maketitle

\begin{abstract}
\noindent For asymmetric sinh-Poisson type problems with Dirichlet boundary condition arising as a mean field equation of equilibrium turbulence vortices with variable intensities of interest in hydrodynamic turbulence, we address the existence of sign-changing bubble tower solutions on a pierced domain $\Om_\e:=\Om\sm \ds \overline{B(\xi,\e)}$, where $\Om$ is a smooth bounded domain in $\R^2$ and  $B(\xi,\e)$ is a ball centered at $\xi\in \Om$ with radius $\e>0$. Precisely, given a small parameter $\rho>0$ and any integer $m\ge 2$, there exist a radius $\e=\e(\rho)>0$ small enough such that each sinh-Poisson type equation, either in Liouville form or mean field form, has a solution $u_\rho$ with an asymptotic profile as a sign-changing tower of $m$ singular Liouville bubbles centered at the same $\xi$ and with $\e(\rho)\to 0^+$ as $\rho$ approaches to zero.
\end{abstract}

\section{Introduction}
Let $\Om$ be a smooth bounded domain in $\R^2$. Given $\e>0$ and $\xi\in \Om$, define $\Om_\e:=\Om\sm \ds \overline{B(\xi,\e)}$, a pierced domain, where $B(\xi,\e)$ is a ball centered at $\xi$ with radius $\e$. Inspired by results in \cite{AP,EFP,F1,GP,pr2},  we are interested in constructing sign-changing \emph{bubble tower} solutions to sinh-Poisson type equations, either in Liouville form or mean field form, with variable intensities and Dirichlet boundary conditions on a pierced domain $\Om_\e$. Precisely, on one hand, we consider the following problem in Liouville form
\begin{equation}\label{spsh}
\left\{ \begin{array}{ll} 
-\lap u=\rho(V_0(x)e^{u} - \nu V_1(x) e^{-\tau u})&\text{in $\Om_\e$}\\
\ \ u=0 &\text{on $\fr \Om_\e$}
\end{array} \right.,
\end{equation}
and, on the other hand, we also study the problem in mean field form
\begin{equation}\label{mfsh}
\left\{ \begin{array}{ll} 
\ds-\lap u={\la_0V_0(x)e^{u}\over \int_{\Om_\e} V_0e^{u}}  - {\la_1\tau V_1(x) e^{-\tau u}\over \int_{\Om_\e}V_1 e^{-\tau u}} &\text{in $\Om_\e$}\\
\ \ u=0 &\text{on $\fr \Om_\e$}
\end{array} \right.,
\end{equation}
where $\rho>0$ is small, $\la_0,\la_1>0$, $V_0,V_1>0$ are smooth potentials in $\Om$, $\tau>0$, $\e>0$ is a small number and $\nu\ge 0$. Our aim is to construct for each problem a family of solutions $u_\rho$ for a suitable choice of $\e=\e(\rho)$, with an asymptotic profile as a sum of positive and negative singular Liouville bubbles centered at the same point $\xi$ as $\rho\to 0$, on the line of \cite{GP,pr2}.

These equations and related ones have attracted a lot of attention in recent years due to its relevance in the statistical mechanics description of 2D-turbulence, as initiated by Onsager \cite{o}. Precisely, in this context Caglioti, Lions, Marchioro, Pulvirenti \cite{clmp} and Sawada, Suzuki \cite{ss} derive the following equation:
\begin{equation}\label{p1}
\ds -\Delta u=\lambda \int\limits_{[-1,1]}{\alpha e^{\alpha u}\over \int\limits_\Omega e^{\alpha u}dx}d \mathcal P(\alpha) \ \ \hbox{ in } \Omega, \ \ \ \ u=0  \ \hbox{ on } \partial\Omega,
\end{equation}
where $\Omega$ is a bounded domain in $\mathbb R^2,$ $u$ is the stream function of the flow, $\lambda>0$ is a constant related to the inverse temperature and  $\mathcal P$ is a Borel probability measure in $[-1,1]$ describing the point-vortex intensities distribution. We observe that \eqref{p1} is obtained under a \emph{deterministic} assumption on the distribution of the vortex circulations. 

\medskip \noindent Equation \eqref{p1} 
includes several well-known problems depending on a suitable choice of $\ml P$. For instance, if $\mathcal P=\delta_1$ is concentrated at $1$, then \eqref{p1} corresponds to the classical mean field equation
\begin{equation}\label{p2mf}
-\Delta u=\lambda  { e^{ u}\over \int\limits_\Omega e^{  u}dx} \ \ \hbox{ in } \Omega, \ \ \ \ u=0 \ \hbox{ on } \partial\Omega.
 \end{equation}
Since there are plenty of results in the literature concerning \eqref{p2mf}, let us just quote  \cite{BL,clmp1,CL1,CL2,Dja,Mal}.  When $\mathcal P=\sigma \delta_{1}+(1-\sigma)\delta_{-\tau }$ with $\tau \in[-1,1]$ and $\sigma\in[0,1]$, equation \eqref{p1} becomes
\begin{equation}\label{p21}
-\Delta u=\lambda \bigg( \sigma{ e^{ u}\over \int\limits_\Omega e^{  u}dx} -(1-\sigma)\tau { e^{ -\tau  u}\over \int\limits_\Omega e^{ -\tau   u}dx}
\bigg) \ \ \hbox{ in } \Omega, \ \ \ \ u=0 \ \hbox{ on }\ \partial\Omega.
 \end{equation}
Setting $\lambda_0=\lambda\sigma$, $\lambda_1=\lambda(1-\sigma)$ and $V_0=V_1=1$ problem \eqref{p21} can be rewritten as \eqref{mfsh} replacing $\Om_\e$ by $\Om$.  
If $\tau =1$ and $V_0=V_1\equiv 1$ problem \eqref{mfsh} reduces to mean field equation of the equilibrium turbulence, see \cite{bjmr,j,jwy2,os1,r} or its related sinh-Poisson version, see \cite{BaPi,BaPiWe,GP,jwy1,jwyz}, which have received a considerable interest in recent years. Recently, sign-changing solutions have been constructed in $\Om$ for the sinh-Poisson equation with Robin boundary condition in \cite{FIT}.

\medskip \noindent Concerning results for $\tau>0$, 
Pistoia and Ricciardi built in \cite{pr1} sequences of blowing-up solutions   to \eqref{mfsh} (in $\Om$ instead $\Om_\e$) when $\lambda_0,\lambda_1\tau^2$ are close to $8\pi$. Ricciardi and Takahashi in \cite{rt}  provided a complete blow-up picture for solution sequences of \eqref{mfsh} and successively  in \cite{rtzz} Ricciardi et al. constructed min-max solutions  when $\lambda_0 \to 8\pi^+$ and   $\lambda_1 \to 0$ on a multiply connected domain (in this case the nonlinearity $e^{-\tau  u}$ may  be treated as a lower-order term with respect to the main term $e^u$). A blow-up analysis and some existence results are obtained when $\tau>0$ in a compact Riemann surface in \cite{j2,rz}. Bubbling solutions in a compact Riemann surface has been recently constructed in \cite{F2}. 

\medskip
\noindent On the other hand, on pierced domains, Ahmedou and Pistoia in \cite{AP} proved that on $\Omega_\epsilon$, there exists a solution to the classical mean field equation \eqref{p2mf} which blows-up at $\xi$ as $\epsilon \to 0$ for any $\lambda>8\pi$ (extra symmetric conditions are required when $\la \in 8\pi \mathbb{N}$). In \cite{EFP} the authors constructed a family of solutions to  the mean field equation with variable intensities \eqref{mfsh} blowing-up positively and negatively at $\xi_1,\dots,\xi_{m_1}$ and $\xi_{m_1+1},\dots,\xi_m$, respectively, as $\epsilon_1,\dots,\epsilon_m \to 0$  on a pierced domain with several holes ($\Om_\e$ is replaced by in $\Om \setminus \cup_{i=1}^m \overline{B(\xi_i,\e_i)}$ ), in the super-critical regime $\lambda_0>8\pi m_1$ and $\lambda_1 \tau^2>8\pi (m-m_1)$ with $m_1 \in \{0,1,\dots,m\}$. Recently, in the same spirit of \cite{EFP}, the author in \cite{F1} addressed the sinh-Poisson type equation with variable intensities \eqref{spsh} on a pierced domain with several holes $\Om \setminus \cup_{i=1}^m \overline{B(\xi_i,\e_i)}$. Equation \eqref{spsh} is related, but not equivalent, to problem \eqref{mfsh} by using the change
$$\rho={\la_0\over \int_{\Om_\e} V_0 e^{u} } \qquad \text{ and }\qquad \rho\nu ={\la_1\tau \over \int_{\Om_\e} V_1 e^{-\tau u} }.$$

 \noindent To the extent of our knowledge, there are by now just few results concerning non-simple blow-up 
for sinh-Poisson type problems. Precisely, sign-changing solutions with non-simple blow-up has been built in \cite{EW} 
for the Neumann sinh-Poisson equation. Grossi and Pistoia built in \cite{GP} a sign-changing bubble tower solutions for the sinh-Poisson version ($\tau=1$) in a symmetric domain $\Om$ with respect to a fixed point $\xi\in \Om$. After that, Pistoia and Ricciardi in \cite{pr2} extend this construction to a sinh-Poisson type equation with asymmetric exponents ($\tau\ne 1$) under a symmetric assumption on $\Om$ depending on either $\tau\in\Q$ or $\tau\notin\Q$. In both situations \cite{GP,pr2} the number of bubbles can be arbitrary large.

\medskip 
A matter of interest to us is whether do there exist sign-changing bubble tower solutions to \eqref{spsh} for small values of $\rho$ or to \eqref{mfsh} for some values of the parameters $\lambda_0,\lambda_1,\tau>0$ on a pierced domain $\Omega_\e$, with bubbles centered at $\xi$. Our first result in this direction without symmetry assumptions reads as follows.

\begin{theo}\label{main}
Let $m\ge 2$ be a positive integer. There exists $\rho_0>0$ such that for all $0<\rho<\rho_0$ there is $\e=\e(\rho)$ small enough such that problem \eqref{spsh} has a sign-changing solution $u_\rho$ in $\Om_\e$ blowing-up at $\xi$ in the sense that
\begin{equation}\label{aburo}
u_\rho=(-1)^{m+1}{2 \pi\over \tau^{\nu(m)}} (\al_m+2) G(\cdot,\xi) + o(1)
\end{equation}
locally uniformly in $\bar\Om \sm\{\xi\}$ as $\rho\to 0^+$.
\end{theo}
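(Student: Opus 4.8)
The plan is to construct the solution by the Lyapunov--Schmidt finite-dimensional reduction, following the scheme of \cite{GP,pr2} but adapted to the pierced domain in the spirit of \cite{AP,EFP,F1}. The first step is to build a good approximate solution. For parameters $\delta_1,\dots,\delta_m>0$ arranged in a tower, i.e. with $\delta_{i+1}/\delta_i\to 0$, set the $i$-th singular Liouville bubble
\[
U_i(x)=\log\frac{8\delta_i^2}{\big(\delta_i^2+|x-\xi|^{2(\al_i+2)}\,(\text{scaling})\big)^2}
\]
centered at $\xi$ with appropriate exponent $\al_i$ (the cumulative sum of the local masses forced by the tower structure and the alternating sign $(-1)^{i+1}$), project each $U_i$ onto $H_0^1(\Om_\e)$ via $PU_i$ with $PU_i=U_i+H_i$, $-\lap H_i=0$ in $\Om_\e$, $H_i=-U_i$ on $\fr\Om_\e$, and take as first approximation $W=\sum_{i=1}^m(-1)^{i+1}\tau^{-\epsilon_i}PU_i$. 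A key point special to $\Om_\e$ is that the hole of radius $\e$ around $\xi$ interacts with the singular bubble: one must choose $\e=\e(\rho)$ (and the innermost scaling parameter) so that the boundary terms produced by the hole are of the same order as the error already present, which is exactly the mechanism fixing $\e(\rho)\to 0^+$. I would then estimate $\|-\lap W-\rho(V_0e^W-\nu V_1e^{-\tau W})\|_*$ in a suitable weighted $L^\infty$ (or $L^p$) norm, showing it is $o$ of the natural size, using the tower separation to control cross terms $\int e^{U_i}U_j$.

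The second step is the linear theory: invert the linearized operator $L_\rho\phi=-\lap\phi-\rho(V_0e^W+\tau\nu V_1e^{-\tau W})\phi$ modulo the $m$-dimensional (or, counting translations/dilations appropriately, higher-dimensional) kernel generated by the $PZ_i^0$ associated with the dilation invariance $\partial_{\delta_i}U_i$. Here one proves a uniform a priori estimate $\|\phi\|_*\le C\|L_\rho\phi\|_{**}$ for $\phi$ orthogonal to the approximate kernel; the argument is by contradiction and blow-up, rescaling around each bubble in the tower and using the nondegeneracy of the Liouville equation on $\R^2$ (the entire solutions of $\lap\phi+e^U\phi=0$ with the right decay form a known finite-dimensional space). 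The pierced domain contributes extra boundary layers that one checks do not add kernel, provided $\e$ is small relative to $\delta_1$.

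The third step is the standard fixed-point argument: write $u=W+\phi$, project the equation onto the orthogonal of the kernel, and solve for $\phi=\phi(\delta_1,\dots,\delta_m)$ by contraction, obtaining $\|\phi\|_*=o(1)$ with the expected rate. The fourth and final step is the reduced (bifurcation) equation: the full problem is solvable iff the reduced energy $F_\rho(\delta_1,\dots,\delta_m):=J_\rho(W+\phi)$ has a critical point, where $J_\rho$ is the energy functional of \eqref{spsh}. Expanding $F_\rho$ one gets, up to lower order, an explicit function of $(\delta_1,\dots,\delta_m)$ whose leading part is a sum of logarithmic self-interaction terms, consecutive-bubble interaction terms, and Robin-function/hole terms; I would show this function has a critical point (of min--max or topological degree type) stable under the $o(1)$ perturbation, exactly as in \cite{GP,pr2} but with the Robin function of $\Om$ replaced by the Green/Robin data of $\Om_\e$, which forces the relation between $\e$ and $\rho$. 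Unwinding the definition of $W$ and the optimal $\delta_i$ then yields the asymptotic profile \eqref{aburo}, the power $\tau^{\nu(m)}$ arising from the alternating product of $\tau^{-\epsilon_i}$ factors and the mass $(\al_m+2)$ from the cumulative tower exponent.

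The main obstacle I expect is the interplay between the two small parameters: unlike the classical tower construction where only the $\delta_i$'s are free, here one must simultaneously balance the tower scales \emph{and} the hole radius $\e$ so that (a) the error estimate in Step 1 stays small, (b) the linear estimate in Step 2 remains uniform (no spurious kernel from the hole), and (c) the reduced equation in Step 4 still has a stable critical point; getting a consistent window of parameters with $\e(\rho)\to0^+$ is the delicate part, and it is what dictates the precise choice $\e=\e(\rho)$ in the statement.
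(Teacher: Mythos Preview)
Your outline follows the classical Lyapunov--Schmidt scheme of \cite{GP,pr2}, but that is \emph{not} what the paper does, and in fact your Step~4 would not go through here without the symmetry assumption that the theorem is precisely designed to remove. In \cite{GP,pr2} the domain symmetry is what kills the translational part of the approximate kernel and what guarantees a critical point of the reduced functional; on a general pierced domain you have neither, and ``min--max or topological degree type'' is not enough to conclude.

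The paper avoids the reduction entirely. Two mechanisms replace it. First, the singular exponents $\alpha_i$ are chosen with $\alpha_i\notin 2\mathbb N$ (see \eqref{ali0}); for such $\alpha_i$ the only bounded (finite-energy) solution of $\Delta\phi+\frac{2\alpha_i^2|y|^{\alpha_i-2}}{(1+|y|^{\alpha_i})^2}\phi=0$ on $\mathbb R^2$ is a multiple of the radial $Y_{0i}$---the translational modes $Y_{1i},Y_{2i}$ are absent. Second, the parameters $\delta_i$ and the hole radius $\e$ are \emph{not} left free for a reduced problem: they are fixed \emph{a priori} as explicit functions of $\rho$ via \eqref{dei}, \eqref{beimp}--\eqref{beimi}, \eqref{di} and \eqref{eps}, the latter chosen so that the balance relation \eqref{repla1} holds. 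With these choices the linearized operator $L$ in \eqref{ol} is shown to be \emph{fully invertible} on $H^1_0(\Omega_\e)$ (Proposition~\ref{p2}); the residual radial kernel candidates $a_iY_{0i}$ are ruled out in the a~priori estimate by testing against $P_\e Z_{0j}$ and $P_\e w_j$ (Claims~\ref{claimmu}--\ref{claim3}), where the boundary contribution from the hole (through the coefficients $\gamma_0,\gamma_j$) is used in an essential way, not merely checked to be harmless. After that, a direct contraction (Proposition~\ref{p3}) produces $\phi$ with no reduced equation to solve, and the asymptotics \eqref{aburo} follow from Lemma~\ref{expaU}.

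So the gap in your proposal is Step~4: you would need to locate a stable critical point of the reduced energy in $(\delta_1,\dots,\delta_m)$ on a non-symmetric domain, and you have not indicated any mechanism for this. The paper's route---non-even $\alpha_i$'s plus explicit parameter choices plus the hole's effect in the linear theory---is exactly what makes the construction work without symmetry.
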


Here, for simplicity, we denote
\begin{equation}\label{nui}
 \nu(i)={1+(-1)^{i}\over 2} = \begin{cases}
 0&\text{ if $i$ is odd}\\
 1&\text{ if $i$ is even}
 \end{cases} 
\qquad\text{and}\qquad  \sigma(i)={1-(-1)^{i}\over 2} = \begin{cases}
1&\text{ if $i$ is odd}\\
0&\text{ if $i$ is even}
 \end{cases},
 \end{equation}
$\al_m$ is given by \eqref{ali} with $i=m$ and  $G(x,y)=-\frac{1}{2\pi}  \log |x-y|+H(x,y)$ is the Green's function of $- \lap$ in $\Om$, where the regular part $H$ is a harmonic function in $\Om$ so that $H(x,y)=\frac{1}{2\pi}  \log |x-y|$ on $\fr\Om$. 
Nevertheless, the latter result may not tell us whether \eqref{mfsh} has sign-changing bubble tower solutions for some values of the parameters $\lambda_0,\lambda_1,\tau>0$. Therefore, we perform directly to problem \eqref{mfsh} a similar procedure. Conversely, Theorem \ref{main2} below may not tell us whether \eqref{spsh} has sign-changing bubble tower solutions for \emph{all} small $\rho>0$. Assume that $\la_0$ and $\la_1$ decompose for some $\al_1>2$ (see \eqref{ali0}), as either
\begin{equation}\label{la0}
\la_0=2\pi m\lf[\al_1+(m-2)\Big(1+{1\over \tau}\Big)\rg],\quad \la_1\tau^2= 2\pi m \lf[\al_1\tau + m(1+\tau)\rg],\quad\text{if } m \text{ even}
\end{equation}
or
\begin{equation}\label{la1}
\la_0=2\pi (m+1)\lf[\al_1+(m-1)\Big(1+{1\over \tau}\Big)\rg], \ \ \la_1\tau^2= 2\pi (m-1) \lf[\al_1\tau + (m-1)(1+\tau)\rg],\ \text{ if }  m \text{ odd}.
\end{equation}
In particular, we choose $\al_1>2$ and $\al_1\notin 2\N$ if $\tau=1$. Our second result (without symmetry assumptions) is the following

\begin{theo}\label{main2}
If either \eqref{la0} or \eqref{la1} holds for a positive integer $m\ge 2$, then there exists a radius $\e>0$ small enough such that problem \eqref{mfsh} has a sign-changing solution $u_\e$ in $\Om_\e$ blowing-up at $\xi$ in the sense of \eqref{aburo}  locally uniformly in $\bar\Om \sm\{\xi\}$ as $\e\to 0^+$.
\end{theo}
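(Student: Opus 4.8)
The proof of Theorem \ref{main2} follows the same Lyapunov--Schmidt scheme as Theorem \ref{main}, so the plan is to highlight only the modifications needed to pass from the Liouville form \eqref{spsh} to the mean field form \eqref{mfsh}. First I would introduce the ansatz: an approximate solution $U$ built as a superposition of $m$ singular Liouville bubbles centered at the fixed puncture point $\xi$, with alternating signs and with concentration parameters $\delta_1 \gg \delta_2 \gg \cdots \gg \delta_m$ arranged in a tower, together with the projection onto the space of functions vanishing on $\partial\Om_\e$ and the correction accounting for the small hole $B(\xi,\e)$. The exponents $\al_i$ are fixed recursively by \eqref{ali} starting from the given $\al_1>2$, and the relations \eqref{la0}--\eqref{la1} are precisely the compatibility conditions forcing the total masses $\int_{\Om_\e} V_0 e^{U}$ and $\int_{\Om_\e} V_1 e^{-\tau U}$ to match $\la_0$ and $\la_1$ at main order — this is why $\la_0,\la_1$ cannot be arbitrary here, in contrast with \eqref{spsh} where $\rho$ is a free small parameter.

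Next I would rewrite \eqref{mfsh} as a fixed point problem. The key observation is that \eqref{mfsh} is \emph{not} equivalent to \eqref{spsh} but is linked to it through the substitution $\rho = \la_0 / \int_{\Om_\e} V_0 e^{u}$ and $\rho\nu = \la_1\tau / \int_{\Om_\e} V_1 e^{-\tau u}$ recalled in the introduction; so I would look for a solution of \eqref{mfsh} of the form $u = U + \phi$ and simultaneously determine the two scalar quantities $\rho$ and $\nu$ (equivalently the two integrals) as part of the unknown, reducing \eqref{mfsh} to \eqref{spsh} with these induced values of $(\rho,\nu)$. Then the analysis of \eqref{spsh} — the linear theory for the linearized operator around $U$ with its $2m$-dimensional approximate kernel generated by the dilation and translation modes of each bubble, the contraction-mapping step solving the auxiliary equation for $\phi$ in the orthogonal complement, and the finite-dimensional reduction whose reduced energy has a critical point giving the true concentration parameters — carries over essentially verbatim, since all estimates are performed in terms of $\rho$ and the $\delta_i$'s.

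The finite-dimensional reduction step is where the role of $\e$ appears: as in Theorem \ref{main}, one does \emph{not} optimize over all parameters freely but instead reads off from the reduced equations an algebraic relation that pins down $\e = \e(\rho)$ (here $\e$ depending on the now-fixed $\la_0,\la_1,\tau$) so that the error contributed by the hole is negligible; solving this relation gives a small $\e>0$ and completes the construction. Finally, the blow-up profile \eqref{aburo} follows from the explicit form of the ansatz: away from $\xi$ all bubbles collapse to multiples of the Green's function $G(\cdot,\xi)$, the signs combine into the factor $(-1)^{m+1}$, and the total coefficient simplifies to $\frac{2\pi}{\tau^{\nu(m)}}(\al_m+2)$ using the recursion for the $\al_i$ and the parity bookkeeping encoded in $\nu(m)$.

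The main obstacle, as in \cite{GP,pr2}, is controlling the interaction between consecutive bubbles in the tower: because the bubbles share the same center $\xi$ and only differ by the scales $\delta_i$, the cross terms in the error estimate are not exponentially small but only polynomially small in the ratios $\delta_{i+1}/\delta_i$, so the linear theory must be carried out with weighted norms sharp enough to absorb them, and the invertibility of the linearized operator must be shown uniformly as the scales separate. The additional difficulty specific to the mean field form is that the nonlinearity is nonlocal, so the linearized operator picks up rank-two perturbations coming from differentiating the denominators $\int_{\Om_\e} V_0 e^{u}$ and $\int_{\Om_\e} V_1 e^{-\tau u}$; these must be shown not to destroy the Fredholm structure, which is exactly where the decomposition hypotheses \eqref{la0}--\eqref{la1} on $\la_0,\la_1$ enter, guaranteeing the perturbation stays transverse to the approximate kernel.
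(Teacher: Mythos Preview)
Your proposal mischaracterizes the architecture of the proof in several essential ways.

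First, there is \emph{no} finite-dimensional (Lyapunov--Schmidt) reduction and no reduced energy. All the parameters $\al_i$, $\de_i$, and the radius $\e$ are fixed \emph{explicitly} as functions of $\rho$ by \eqref{dei}, \eqref{ali0}--\eqref{ali}, \eqref{beimp}--\eqref{beimi}, \eqref{di}, \eqref{eps} before any functional analysis begins; there is nothing left to optimize. The linearized operator $\mathcal L$ in \eqref{olmf} is then shown to be genuinely \emph{invertible} (Proposition~\ref{elle}), not merely Fredholm modulo an approximate kernel, so the remainder $\phi$ is obtained by a straight contraction mapping (Proposition~\ref{p3mf}). Relatedly, your description of a ``$2m$-dimensional approximate kernel generated by the dilation and translation modes'' is wrong: the whole point of the restriction \eqref{ali0} is to force $\al_i\notin 2\N$ for every $i$, which kills the translation modes $Y_{1j},Y_{2j}$ and leaves only the $m$ radial modes $Y_{0j}$; these are handled inside the a~priori estimate for $\mathcal L$, not by projection.

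Second, the paper does \emph{not} reduce \eqref{mfsh} to \eqref{spsh} by solving for $(\rho,\nu)$ as extra unknowns; it attacks \eqref{mfsh} directly with the same ansatz $U$, writes the nonlocal error $\mathcal R$ in \eqref{Rmf}, and builds the nonlocal linear operator $\mathcal L$ incorporating the rank-two corrections from the denominators. The hypotheses \eqref{la0}--\eqref{la1} enter at the error-estimate stage (Lemma~\ref{estrr0mf}): they are exactly the identities $\la_0=4\pi\sum_{j\ \text{odd}}\al_j$ and $\la_1\tau^2=4\pi\sum_{j\ \text{even}}\al_j$ needed so that $\la_0/\!\int V_0 e^U$ and $\la_1\tau/\!\int V_1 e^{-\tau U}$ match the local weights at main order and $\|\mathcal R\|_p$ is small. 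They are not a transversality condition relative to any kernel.

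\bigskip
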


\noindent Our solutions correspond to a superposition of highly concentrated vortex configurations of alternating orientation around the hole $B(\xi,\e)$ and they extend some known results \cite{GP,pr2} for symmetric domains. We also point out that a delicate point in the paper concerns the linear theory developed in Section \ref{sec3}. A bit more complicated analysis is necessary in comparison with linear theories developed in  previous works \cite{AP,DeKM,EFP,EGP,F,GP,pr2}. 

\medskip

\noindent Without loss of generality, we shall assume in the rest of the paper that $\xi=0\in\Om$, so that $\Om_\e=\Om\sm B(0,\e)$ where $\e>0$ is small and  $\nu=1$, since we can replace $\nu V_2$ by $V_2$. However, we need the presence of $\nu$ when we compare \eqref{spsh} with equation \eqref{mfsh}. 

\medskip \noindent Finally, we point out some comments about the proofs of the theorems. Following the ideas presented in \cite{GP,pr2} about bubble tower solutions for sinh-Poisson type equations and in \cite{AP,EFP,F} about construction of solutions on pierced domains, we find a solution $u_\rho$ using a perturbative approach, precisely, we look for a solution of \eqref{spsh} as
\begin{equation}\label{solurho}
u_\rho=U +\phi,
\end{equation}
where $U$ is a suitable ansatz built using the projection operator $P_\e$ onto $H^1_0(\Omega_\e)$(see \eqref{ePu})  and $\phi\in H_0^1(\Om_\e)$ is a small remainder term. Letting $w_{\de,\al}(x)=\log\frac{2\alpha^2\delta^{\alpha}}{(\delta^{\alpha}+|x|^{\alpha})^2}$ be a solution to the singular Liouville equation
\begin{equation*}
\Delta u+|x|^{\alpha-2}e^{u}=0 \quad \text{ in $\R^2$},\qquad
\ds\int_{\R^2} |x|^{\alpha-2} e^u <+\infty, 
\end{equation*}
the ansatz $U$ is defined as follows
\begin{equation}\label{ansatz}
U(x)=\sum_{i \text{ odd}} P_\e w_i(x)-\dfrac{1}{\tau} \sum_{i \text{ even}} P_\e w_i(x) = \sum_{i =1}^m  \dfrac{(-1)^{i+1}}{\tau^{\nu(i)}} P_\e w_i(x),
\end{equation}
where $w_j:= w_{\delta_j,\al_j}$ for $j=1,\dots,m$.
A careful choice of the parameters $\delta_j $'s, $\al_i$'s and the radius $\e$, depending on $\rho>0$, is made in section \ref{sec2}  (see \eqref{dei}, \eqref{ali0}-\eqref{ali} 
and \eqref{eps}) in order to make $U$ be a good approximated solution. Indeed, the error term $R$ for \eqref{spsh} given by
\begin{equation}\label{R} 
R=\Delta U+\rho(V_0(x) e^{U} - V_1(x)e^{-\tau U})
\end{equation} 
is small in $L^p$-norm for $p>1$ close to $1$ (see Lemma \ref{estrr0}). A linearization procedure around $U$ leads us to re-formulate \eqref{spsh} in terms of a nonlinear problem for $\phi$ (see equation \eqref{ephi}). We will prove the existence of such a solution $\phi$ to \eqref{ephi} by using a fixed point argument, thanks to some estimates in subsection \ref{sec4} (see \eqref{estlaphi} and \eqref{estnphi}). The corresponding solution $u_\rho$ in \eqref{solurho} behaves as a sign-changing tower of $m$ singular Liouville bubbles thanks to the asymptotic properties of its main order term $U$ (see \eqref{waje} in Lemma \ref{expaU}). In Section \ref{sec3} we will prove the invertibility of the linear operator naturally associated to the problem (see \eqref{ol}) stated in  Proposition \ref{p2}. To conclude Theorem \ref{main2}, the same procedure with the same ansatz is performed to equation \eqref{mfsh}, assuming \eqref{la0}-\eqref{la1} and $\e=\e(\rho)$, where the error term is given by
\begin{equation}\label{Rmf} 
\ml{R}=\Delta U+\la_0{V_0(x) e^{U}\over \int_{\Om_\e} V_0e^{U}} - \la_1\tau {V_1(x)e^{-\tau U}\over \int_{\Om_\e} V_1e^{-\tau U}}.
\end{equation} 


\section{Approximation of the solution}\label{sec2}
\noindent In this section we shall make a choice of the parameters $\al_i$'s, $\de_j$'s and $\e=\e(\rho)$ in order to make $U$ a good approximation.  Introduce the function $P_\e w$ as the unique solution of
\begin{equation}\label{ePu}
\left\{ \begin{array}{ll} 
\Delta P_\e w=\lap  w &\text{in }\Om_\e\\
P_\e w=0,&\text{on }\fr\Om_\e.
\end{array}\right.
\end{equation}
For simplicity, we will denote $h_0=H(0,0)$. We have the following asymptotic expansion of $Pw_{\de,\al}$ as
$\delta \to 0$, as shown in \cite[Lemma 2.1]{AP} (see also \cite[Lemma 2.1]{EFP}):
\begin{lem}\label{ewfxi}
The function $P_\e w_{\delta,\al}$ satisfies
$$P_\e w_{\delta,\al}(x)=w_{\delta,\al}(x)-\log(2\al^2\de^\al)+
4\pi\al H(x,0)-\gamma_{\de,\e}^\al G(x,0)+O\Big(\de^\al+\Big({\e\over \de}\Big)^{\al}+\Big[1+\Big|\frac{\log\de}{\log\e}\Big|\Big]\e\Big),$$
uniformly in $\Om_\e$, where $\gamma_{\de,\e}^\al$ is given by $\gamma_{\de,\e}^\al=\frac{-2\al \log\de +4\pi\al h_0 }{-{1\over2\pi}\log\e + h_0}.$ In
particular, there holds
$$P_\e w_{\delta,\al}(x)=[4\pi\al-\gamma_{\de,\e}^\al] G(x,0)+O\Big(\de^\al+\Big({\e\over \de}\Big)^{\al}+\Big[1+\Big|\frac{\log\de}{\log\e}\Big|\Big]\e\Big)$$
as $\e\to0$ locally uniformly in $\Om \sm\{0\}$.
\end{lem}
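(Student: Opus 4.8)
The plan is to compare $P_\e w_{\de,\al}$ with an explicit harmonic function and then control the difference by the maximum principle on the pierced domain $\Om_\e$. Write $w:=w_{\de,\al}$. Since $0\notin\overline{\Om_\e}$, both $w$ and $P_\e w$ are smooth up to $\overline{\Om_\e}$, and $\lap(P_\e w-w)=0$ in $\Om_\e$ by \eqref{ePu}; hence $z:=P_\e w-w$ is harmonic in $\Om_\e$ with $z=-w$ on $\fr\Om\cup\fr B(0,\e)$. I would then introduce the candidate
\[
\zeta(x):=-\log(2\al^2\de^\al)+4\pi\al\,H(x,0)-\gamma_{\de,\e}^\al\,G(x,0),
\]
which is harmonic in $\Om_\e$ because $H(\cdot,0)$ is harmonic in all of $\Om$ while $G(\cdot,0)$ is harmonic in $\Om\sm\{0\}\supset\Om_\e$. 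Since the claimed expansion is exactly $P_\e w=w+\zeta+O(\cdots)$, i.e. $z-\zeta=O(\cdots)$, and $z-\zeta$ is harmonic in $\Om_\e$ and continuous on $\overline{\Om_\e}$, the maximum principle reduces the whole lemma to bounding $z-\zeta=-w-\zeta$ on the two boundary components.

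On $\fr\Om$ one has $H(x,0)=\frac{1}{2\pi}\log|x|$ and $G(x,0)=0$, so $\zeta(x)=-\log(2\al^2\de^\al)+2\al\log|x|$; since $|x|$ is bounded away from $0$ there, writing $2\log(\de^\al+|x|^\al)=2\al\log|x|+2\log(1+(\de/|x|)^\al)$ and using $\log(1+t)\le t$ gives $-w(x)=-\log(2\al^2\de^\al)+2\al\log|x|+O(\de^\al)$, whence $z-\zeta=O(\de^\al)$ on $\fr\Om$. On $\fr B(0,\e)$, where $|x|=\e$, smoothness of $H(\cdot,0)$ near the origin gives $H(x,0)=h_0+O(\e)$ and hence $G(x,0)=-\frac{1}{2\pi}\log\e+h_0+O(\e)$; substituting into $\zeta$, the crucial point is that $\gamma_{\de,\e}^\al$ is \emph{defined} precisely so that $4\pi\al h_0-\gamma_{\de,\e}^\al\bigl(-\frac{1}{2\pi}\log\e+h_0\bigr)=2\al\log\de$, which yields $\zeta(x)=-\log(2\al^2\de^\al)+2\al\log\de+(4\pi\al-\gamma_{\de,\e}^\al)O(\e)$ on $\fr B(0,\e)$. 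For $\e$ small the denominator $-\frac{1}{2\pi}\log\e+h_0$ is comparable to $|\log\e|$, so $|\gamma_{\de,\e}^\al|=O\bigl(1+|\log\de/\log\e|\bigr)$ and this last term is $O\bigl([1+|\log\de/\log\e|]\e\bigr)$; comparing with $-w(x)=-\log(2\al^2\de^\al)+2\al\log\de+O((\e/\de)^\al)$ (again $\log(1+t)\le t$ with $t=(\e/\de)^\al$) gives $z-\zeta=O\bigl((\e/\de)^\al+[1+|\log\de/\log\e|]\e\bigr)$ on $\fr B(0,\e)$. Taking the maximum over the two pieces and invoking the maximum principle then proves the first expansion.

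For the ``in particular'' assertion I would fix a compact $K\subset\Om\sm\{0\}$, so $|x|\ge c>0$ on $K$; then $w(x)-\log(2\al^2\de^\al)=-2\al\log|x|+O(\de^\al)=4\pi\al\bigl(G(x,0)-H(x,0)\bigr)+O(\de^\al)$ by $-\frac{1}{2\pi}\log|x|=G(x,0)-H(x,0)$. Plugging this into the first expansion cancels the two $4\pi\al H(x,0)$ terms and leaves $P_\e w_{\de,\al}(x)=[4\pi\al-\gamma_{\de,\e}^\al]G(x,0)+O(\cdots)$ uniformly on $K$ as $\e\to0$.

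The only genuinely delicate bookkeeping concerns the hole: the coefficient $\gamma_{\de,\e}^\al$ is of order $|\log\de|/|\log\e|$ and multiplies the $O(\e)$ coming from the Taylor expansion of $H(\cdot,0)$ and $G(\cdot,0)$ at the origin, which is exactly what produces the $[1+|\log\de/\log\e|]\e$ contribution to the error; one also has to keep in mind that the split $\log(\de^\al+\e^\al)=\al\log\de+O((\e/\de)^\al)$ is only informative in the relevant regime $\e\ll\de$ (otherwise $(\e/\de)^\al$ would not be small and the statement would be empty). Beyond this, the argument is the standard maximum-principle comparison already carried out in \cite{AP,EFP}.
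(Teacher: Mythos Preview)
Your argument is correct and is precisely the standard maximum-principle comparison that the paper itself defers to \cite[Lemma 2.1]{AP} and \cite[Lemma 2.1]{EFP}: the paper does not give its own proof of this lemma, and your decomposition $P_\e w-w=\zeta+(z-\zeta)$ with boundary estimates on $\partial\Omega$ and $\partial B(0,\e)$ is exactly the method used in those references. Your identification of $\gamma_{\de,\e}^\al$ as the unique constant making the leading terms match on the inner circle, and the resulting $[1+|\log\de/\log\e|]\e$ error from the $O(\e)$ Taylor remainder of $H(\cdot,0)$, are the key observations and you have them right.
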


\noindent Given $m\in \N$ with $m\ge 2$, consider $\delta_j>0$ and $\al_j>2$, $j=1,\dots,m$, so that our approximating solution $U$ is defined by \eqref{ansatz}, 
parametrized by $\de_j$'s and $\al_i'$s with $\de_j=\de_j(\rho,\al_1)$ (it also depends on $\tau$, $h_0$, $V_0(0)$ and $V_1(0)$), where $\nu(i)$ is defined in \eqref{nui} and $P_\e$ the projection operator defined by \eqref{ePu} for a suitable choice of $\e$. In order to have a good approximation, for any $i=1,\dots,m$ we will assume that
 \begin{equation}\label{dei}
 \de_i^{\al_i}= d_i \rho^{\be_i},
 \end{equation}
 for small $\rho>0$, where $\al_i$'s, $\be_i$'s, and $d_i$'s will be specified below. We choose
 \begin{equation}\label{ali0}
\al_1>2,\quad\text{with}\quad \al_1\in\begin{cases}
\ds\bigcap_{k=0}^{(m-2)/2} \Big(2\N - {4k\over \tau}\Big)^c \bigcap \Big[\bigcap_{k=1}^{m/2} \Big( {2\over \tau}\N - 4k+2\Big)^c\Big]&\text{ if $m$ is even}\\[0.6cm]
\ds\bigcap_{k=0}^{(m-1)/2} \Big(2\N - {4k\over \tau}\Big)^c \bigcap \Big[\bigcap_{k=1}^{(m-1)/2} \Big( {2\over \tau}\N - 4k+2\Big)^c\Big]&\text{ if $m$ is odd}
\end{cases},
 \end{equation}
\begin{equation}\label{ali}
\text{and \ for $\ i\ge 2$}\qquad \al_i=\begin{cases}
 \al_1+2(i-1)+\dfrac{2(i-1)}{\tau}&\text{ if $i$ is odd}\\[0.3cm]
 \al_1\tau+2(i-1)\tau + 2(i-1)&\text{ if $i$ is even}
 \end{cases}.
 \end{equation}
Note that
\begin{equation}\label{ali01}
\al_i=\lf(\al_1+2i-2+{2i-2\over\tau}\rg) \tau^{\nu(i)},\qquad\text{for $i\ge 2$}
\end{equation}
 and $\al_i>0$ for all $i=1,\dots,m$. Furthermore, several identities and properties of $\al_i$'s, $\be_i$'s, $d_i$'s and $\e$ will be proven in order to have a good approximation. From the definition \eqref{ali}, it is readily checked that $\{\al_{2k+1}\}_k$ and $\{\al_{2k}\}_k$ are increasing in $k$. Since $\al_1>2$ and $\al_2=\al_1\tau + 2\tau +2>2$, it follows that $\al_i>2$ for all $i=1,\dots,m$. Notice that all these sets are countable : $2\N - {4k\over \tau}$ for $k=0,\dots,\frac{m-2}{2}$ and $ {2\over \tau}\N - 4k+2$ for $k=1,\dots, \frac m2$ with $m$ even; $ 2\N - {4k\over \tau} $ for $k=0,\dots, \frac{m-1}{2}$ and $  {2\over \tau}\N - 4k+2$ for $ k=1,\dots, \frac{m-1}{2}$ with $m$ odd. Hence, its union is also countable set and the complement of its union is dense in $\R$. Therefore, there exist $\al_1\in\R$ satisfying \eqref{ali0}.

\begin{lem}
If $\al_i$'s are given by \eqref{ali} then
\begin{equation}\label{ali1}
\al_{j+1} =  (\al_j+2)\tau^{(-1)^{j+1} } +2 =\begin{cases}
 (\al_j+2)\tau +2 &\text{ if $j$ is odd}\\[0.3cm]
\dfrac{ \al_j+2}{\tau}+ 2 &\text{ if $j$ is even}
 \end{cases}
\end{equation}
and
\begin{equation}\label{sai}
\sum_{i=1}^{j} {(-1)^{i+1}\over \tau^{\nu(i)}} \al_i=
\begin{cases}
-j-{j\over\tau}&\text{if $j$ is even}\\
\al_1+j-1+{j-1\over\tau}&\text{if $j$ is odd}
\end{cases}
\end{equation}
hold for all $j\ge 1$.  If $\al_1$ satisfies \eqref{ali0} then $\al_i\notin 2\N$  for all $i=1,\dots,m$.
\end{lem}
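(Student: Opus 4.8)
\emph{Approach.} The plan is to deduce all three assertions by direct substitution into the closed form \eqref{ali}, most conveniently from the unified expression \eqref{ali01}, which I would first note is valid for $i=1$ as well, since $\nu(1)=0$ and so $\tau^{\nu(1)}=1$ makes \eqref{ali01} read $\al_1=\al_1$. No deep idea is involved; the only thing to watch is the bookkeeping of parities and of the index ranges appearing in \eqref{ali0}.

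\emph{The recursion \eqref{ali1} and the telescoped sum \eqref{sai}.} For \eqref{ali1} I would just plug \eqref{ali} into both sides. If $j$ is odd then $j+1$ is even and
\[
(\al_j+2)\tau+2=\Big(\al_1+2(j-1)+\tfrac{2(j-1)}{\tau}+2\Big)\tau+2=\al_1\tau+2j\tau+2j=\al_{j+1},
\]
and the case $j$ even is the mirror computation with $\tau$ replaced by $1/\tau$; the case $j=1$ is the first instance of the odd case, so \eqref{ali1} holds for all $j\ge 1$ with no induction. For \eqref{sai} the crux will be the single identity
\[
\frac{(-1)^{i+1}}{\tau^{\nu(i)}}\,\al_i=(-1)^{i+1}\Big(\al_1+2(i-1)+\tfrac{2(i-1)}{\tau}\Big),\qquad i\ge 1,
\]
valid because for $i$ odd the weight $\tau^{\nu(i)}=1$, while for $i$ even it exactly cancels the overall factor $\tau$ in \eqref{ali01}. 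Summing over $i=1,\dots,j$ then reduces \eqref{sai} to the elementary alternating sums $\sum_{i=1}^{j}(-1)^{i+1}$ and $\sum_{i=1}^{j}(-1)^{i+1}(i-1)$, which equal $0$ and $-j/2$ for $j$ even, and $1$ and $(j-1)/2$ for $j$ odd; inserting these gives precisely the two cases of \eqref{sai}. (Alternatively one may induct on $j$ directly from \eqref{ali1}.)

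\emph{Non-integrality $\al_i\notin 2\N$, and the main obstacle.} This is the part I expect to require genuine care, since the conclusion is tailored exactly to the sets excluded in \eqref{ali0}. I would argue by contradiction: suppose $\al_i=2n$ with $n\in\N$ for some $1\le i\le m$, and split on the parity of $i$. If $i=2k+1$ is odd, then $i\le m$ forces $0\le k\le\frac{m-1}{2}$ (and $0\le k\le\frac{m-2}{2}$ when $m$ is even), and \eqref{ali} gives $\al_1=2(n-2k)-\tfrac{4k}{\tau}$; from $\al_1>2$ and $\tau>0$ one gets $2(n-2k)=\al_1+\tfrac{4k}{\tau}>2$, hence $n-2k\in\N$ and $\al_1\in 2\N-\tfrac{4k}{\tau}$, contradicting \eqref{ali0} (for $i=1$, i.e. $k=0$, this is just the assumption $\al_1\notin 2\N$). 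If $i=2k$ is even, then $i\le m$ forces $1\le k\le\frac{m}{2}$ (and $1\le k\le\frac{m-1}{2}$ when $m$ is odd), and \eqref{ali} gives $\al_1\tau=2n-(4k-2)(\tau+1)$, i.e. $\al_1=\tfrac{2(n-2k+1)}{\tau}-4k+2$; the estimate $2n=\al_1\tau+(4k-2)(\tau+1)>4k(\tau+1)-2>4k-2$ forces $n\ge 2k$, so $n-2k+1\in\N$ and $\al_1\in\tfrac{2}{\tau}\N-4k+2$, again contradicting \eqref{ali0}. In both parities the relevant $k$ lies exactly in the index set for which \eqref{ali0} removes the corresponding set, which is the point of that hypothesis, so no such $n$ exists. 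The only genuinely fiddly point will be matching these $k$-ranges with the condition $i\le m$ and checking the integrality conditions $n-2k\in\N$ and $n-2k+1\in\N$ from $\al_1>2$ and $\tau>0$; everything else is routine substitution into \eqref{ali}.
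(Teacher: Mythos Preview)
Your proposal is correct and follows essentially the same approach as the paper: direct substitution from \eqref{ali}/\eqref{ali01} for the recursion \eqref{ali1}, the same reduction of \eqref{sai} to the elementary alternating sums $\sum(-1)^{i+1}$ and $\sum(-1)^{i+1}(i-1)$, and for $\al_i\notin 2\N$ the contrapositive of the paper's direct argument. Your treatment of the third part is in fact slightly more careful than the paper's, since you explicitly verify $n-2k\in\N$ and $n-2k+1\in\N$ from $\al_1>2$ and $\tau>0$, whereas the paper leaves the positivity check implicit.
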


\begin{proof}[\dem]Assume first that $i$ is odd. Then, $i+1$ is even, from \eqref{ali} for $i$ odd, $ \al_i= \al_1+2i-2+\frac{2i-2}{\tau}$ and direct computations lead us to obtain that
$$(\al_i+2)\tau +2 =\lf(\al_1+2i+{2i-2\over \tau}\rg)\tau +2=(\al_1+2i)\tau + 2i.$$
On the other hand, from \eqref{ali} for $i+1$ even, we find that $\al_{i+1}=\al_1\tau +2i\tau +2i$, so that $\al_{i+1}=(\al_i+2)\tau +2$. Direct computations in case $i$ is even allow us to conclude \eqref{ali1}.

From the choice of $\al_i$'s and \eqref{ali01}, it follows that
\begin{equation*}
\begin{split}
\sum_{i=1}^{j} {(-1)^{i+1}\over \tau^{\nu(i)}} \al_i
&=\sum_{i=1}^{j}  (-1)^{i+1} \bigg[\al_{1} + 2i-2 + {2i - 2\over \tau }  \bigg] \\
&=\al_1\sum_{i=1}^{j} (-1)^{i+1} + 2\sum_{i=1}^{j} (-1)^{i+1} (i-1)  + {2 \over \tau} \sum_{i=1}^{j} (-1)^{i+1} (i-1)
\end{split}
\end{equation*}
and we conclude \eqref{sai}, in view of
$$\sum_{i=1}^{j} (-1)^{i+1} (i-1)=\sum_{i=1}^{j} (-1)^{i+1} i - \sum_{i=1}^{j} (-1)^{i+1} =\begin{cases}
-{j\over 2} +0 &\text{if $j$ is even}\\
{j+1\over 2} - 1&\text{if $j$ is odd}
\end{cases}.$$

Finally, assume that $m$ is even. If $\al_1$ satisfies \eqref{ali0} then we  have that $\al_1\in \big (2\N -{4k\over \tau}\big)^c$, $k=0,1,\dots,{m-2\over 2}$ and $\al_1\in\big({2\over \tau}\N-4k+2\big)^c$,  $k=1,2,\dots,{m\over 2}$. That is to say, $\al_1+{4k\over \tau}\notin 2\N$, $k=0,1\dots,{m-2\over 2}$ and $\tau(\al_1+4k-2)\notin 2\N$, $k=1,\dots,{m\over 2}$. Therefore, $\al_{2k+1}=\al_1+4k+{4k\over \tau}\notin 2\N$, $k=0,\dots,{m-2\over 2}$ and $\al_{2k}=\al_1\tau + (4k-2)\tau + 4k-2\notin 2\N$, $k=1,\dots,{m\over 2}$. Similar argument leads us to conclude that $\al_i\notin 2\N$ for all $i=1,\dots,m$ if $m$ is odd.
\end{proof}

In particular, we have that $\al_1>2$, $\al_2=(\al_1+2)\tau +2$, $\al_3={\al_2+2\over \tau} + 2$, $\al_4=(\al_3+2)\tau +2$, $\al_5={\al_4+2\over\tau} + 2,\ \dots$
Furthermore, we have that 
\begin{equation}\label{4psa}
4\pi \sum_{i=1}^{m} {(-1)^{i+1}\over \tau^{\nu(i)}} \al_i -2\pi(\al_1-2)=(-1)^{m+1}{2\pi\over \tau^{\nu(m)}}(\al_m+2).
\end{equation}

Now, we define $\be_i$ as follows, if $m$ is even then
\begin{equation}\label{beimp}
\be_l=\tau^{\nu(l)} \bigg[ m-l + {m-l+1\over\tau} \bigg]=\begin{cases}
(m - l)\tau +m-l+1 ,&\text{if $l$ is even}\\[0.3cm]
\ds m-l + {m-l+1\over \tau}&\text{if $l$ is odd}
\end{cases} ,
\end{equation}
 for $l=1,2,\dots,m$. 
If $m$ is odd then
 \begin{equation}\label{beimi}
\be_l=\tau^{\nu(l)} \lf[m-l+1 + {m-l\over\tau}\rg]
=\begin{cases}
(m - l+1)\tau +m-l ,&\text{if $l$ is even}\\[0.3cm]
\ds m-l +1 + {m-l\over \tau}&\text{if $l$ is odd}\end{cases}
\end{equation}
 for $l=1,2,\dots,m$. In any case, either $m$ is odd or even, it is readily checked that $\be_m=1$ and $\be_i>0$ for all $i=1,\dots,m$. Note that we can rewrite
\begin{equation}\label{beimip}
\be_l=\tau^{\nu(l)} \lf[{m-l+1\over \tau^{\nu(m)}} + {m-l\over\tau^{\nu(m+1)}}\rg].
\end{equation}
Hence, we shall prove the following useful properties from the definition \eqref{beimp} and \eqref{beimi}. Note that $1-\sigma(i) = \nu(i) =\sigma(i+1)$ and $\sigma(i)=1-\nu(i)=\nu(i+1)$.
 
 \begin{lem}
 If $\be_i$'s are given by either \eqref{beimp} or \eqref{beimi} then for any $l=2,\dots,m$
\begin{equation}\label{bell1}
\be_{l}=\begin{cases}
\tau \be_{l-1} - \tau - 1 , &\text{ if $l$ is even}\\[0.3cm]
 \ds{\be_{l-1}\over \tau} - 1 - {1\over \tau}&\text{ if $l$ is odd}
\end{cases}
\end{equation}
and for any $l=1,\dots,m-1$ 
\begin{equation}\label{belm1o2}
\frac{\be_l-1}{2\tau^{\nu(l)}}=(-1)^{\nu(l)} \sum_{j=l+1}^m {(-1)^{j}\over \tau^{\nu(j)}}\be_j.
\end{equation}
Furthermore, it holds that $\frac{\be_l}{\al_l} < \frac{\be_{l-1}}{\al_{l-1}}$ for any $l=2,\dots,m$, so that, ${\de_i\over\de_j}\to 0$ as  $\rho\to0$ for any $i<j.$
 \end{lem}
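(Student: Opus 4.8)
The plan is to verify the three assertions of the lemma directly from the closed-form definitions \eqref{beimp}, \eqref{beimi}, \eqref{ali}, treating the cases $l$ even and $l$ odd separately throughout.

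First I would prove the recursion \eqref{bell1}. This is a one-line algebraic check: if $l$ is even, then $l-1$ is odd, so one plugs the odd-index formula for $\be_{l-1}$ (from \eqref{beimp} or \eqref{beimi} as appropriate) into $\tau\be_{l-1}-\tau-1$ and compares with the even-index formula for $\be_l$; the case $l$ odd is symmetric, dividing by $\tau$ instead. One should do this once for $m$ even and once for $m$ odd, but in both subcases it reduces to the identity $\tau\big[(m-l+1)/\tau\big] = m-l+1$ and bookkeeping of the $+1$ shift between $m-l$ and $m-l+1$. No obstacle here.

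Next I would establish the telescoping identity \eqref{belm1o2}. The natural approach is downward induction on $l$ from $l=m-1$. For the base case $l=m-1$: the right-hand side is a single term $(-1)^{\nu(m-1)}(-1)^m/\tau^{\nu(m)}\,\be_m$, and since $\be_m=1$ and $\nu(m-1)=\sigma(m)$, one checks this equals $(\be_{m-1}-1)/(2\tau^{\nu(m-1)})$ using \eqref{bell1} with $l=m$ to express $\be_{m-1}$ in terms of $\be_m=1$. For the inductive step, I would split off the $j=l+1$ term from the sum, apply the induction hypothesis at level $l+1$ to the remaining sum, and use the recursion \eqref{bell1} once more (relating $\be_{l+1}$ and $\be_l$) together with $\nu(l+1)=\sigma(l)=1-\nu(l)$ to collapse everything; the sign factor $(-1)^{\nu(l)}$ flips correctly because $\nu(l+1)$ and $\nu(l)$ have opposite parity. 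I expect this to be the main bookkeeping step, since one must track the interaction of the $(-1)^{\nu(l)}$ prefactor, the $(-1)^j$ inside the sum, and the $\tau^{\pm1}$ from the recursion; a cleaner alternative is to evaluate $\sum_{j=l+1}^m (-1)^j \be_j/\tau^{\nu(j)}$ in closed form directly from \eqref{beimip} as a finite sum and compare, which avoids the induction entirely but requires summing an alternating arithmetic-type series.

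Finally, for the monotonicity $\be_l/\al_l < \be_{l-1}/\al_{l-1}$, I would rewrite both ratios using \eqref{ali01} (or \eqref{ali}) and \eqref{beimip}. Since $\be_i$ and $\al_i$ each carry a common factor $\tau^{\nu(i)}$ that cancels in the ratio, the inequality reduces to comparing $\frac{m-l+1+ (m-l)\cdot(\text{power of }1/\tau)}{\al_1 + 2l-2 + (2l-2)/\tau}$ at consecutive indices, i.e.\ a quotient whose numerator is (strictly) decreasing in $l$ and whose denominator is (strictly) increasing in $l$ — hence the quotient strictly decreases. Once $\be_l/\al_l$ is strictly decreasing in $l$, the conclusion ${\de_i/\de_j}\to 0$ as $\rho\to0$ for $i<j$ follows from \eqref{dei}: indeed $\de_i/\de_j = d_i^{1/\al_i} d_j^{-1/\al_j}\,\rho^{\,\be_i/\al_i - \be_j/\al_j}$, and $\be_i/\al_i - \be_j/\al_j > 0$ for $i<j$ by the just-proved monotonicity, so the power of $\rho$ tends to $0$ while the constants stay bounded. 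The only mild care needed is that the monotonicity be strict, which it is since $\al_l$ is strictly increasing and $\be_l$ strictly decreasing in $l$.
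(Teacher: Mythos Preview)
Your proposal is correct and largely overlaps with the paper's argument, with some differences worth noting. For \eqref{bell1} you do exactly what the paper does: plug the closed form for $\be_{l-1}$ into the right-hand side and compare, splitting on the parity of $l$ and $m$. For \eqref{belm1o2} your ``cleaner alternative'' (evaluate $\sum_{j=l+1}^m(-1)^j\be_j/\tau^{\nu(j)}$ directly from \eqref{beimip}) is precisely the paper's route: it first establishes the auxiliary form \eqref{bel} by summing the closed expressions, then reindexes $j=l+i$ to get \eqref{belm1o2}; your primary proposal, downward induction from $l=m-1$ via the recursion \eqref{bell1}, is a legitimate alternative that avoids the summation but trades it for parity bookkeeping. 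For the monotonicity $\be_l/\al_l<\be_{l-1}/\al_{l-1}$ your argument is genuinely different and somewhat slicker than the paper's: you cancel the common factor $\tau^{\nu(l)}$ via \eqref{ali01} and \eqref{beimip} to reduce to a ratio with numerator $(m-l+1)/\tau^{\nu(m)}+(m-l)/\tau^{\nu(m+1)}$ strictly decreasing in $l$ and denominator $\al_1+2(l-1)(1+1/\tau)$ strictly increasing in $l$, whereas the paper instead substitutes the recursions \eqref{ali1} and \eqref{bell1} into the cross-product $\al_{l-1}\be_l<\al_l\be_{l-1}$ and checks it case by case. One caution: your final sentence asserts that ``$\al_l$ is strictly increasing'', which is not true in general (only $\al_l/\tau^{\nu(l)}$ is, as you in fact use); the paper only claims monotonicity of $\{\al_{2k}\}_k$ and $\{\al_{2k+1}\}_k$ separately. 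This does not affect your actual argument, but the sentence should be reworded.
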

 
\begin{proof}[\dem] First, assume that $m$ is even. So, $\be_l$'s are given by \eqref{beimp}. If $l$ is even, then we have that $l-1$ is odd, so that, 
$$\be_{l-1}\ds =m- l+1 +{ m- l+2\over \tau}\qquad\text{ and}$$
$$\tau \beta_{l-1} - \tau -1= (m-l)\tau +\tau + m-l+2-\tau -1=\be_l.$$
Next, assume that $l$ is odd (still $m$ is even). Similarly as above, it follows that $ {\be_{l-1}\over \tau} - 1 - {1\over \tau}=\be_l$. Arguing in the same way for $\be_l$'s given by \eqref{beimi} when $m$ is odd, we conclude \eqref{bell1}.

Now, for any $m$, we shall prove that
\begin{equation}\label{bel}
\be_l=\begin{cases}
\ds 1 + 2\sum_{i=1}^{m-l} (-1)^{i+1}\tau^{\sigma(i)} \be_{l+i},&\text{if $l$ is even}\\[0.4cm]
\ds 1 + 2\sum_{i=1}^{m-l} {(-1)^{i+1}\over \tau^{\sigma(i)}} \be_{l+i}&\text{if $l$ is odd}
\end{cases}.
\end{equation}
Assume that $m$ is even. If $l$ is even then $\nu(i+l)=1-\sigma(i)$ and from \eqref{beimip} we get that
\begin{equation*}
\begin{split}
 \sum_{i=1}^{m-l} (-1)^{i+1}\tau^{\sigma(i)} \be_{l+i} 
&= \sum_{i=1}^{m-l} (-1)^{i+1}\tau \lf[ {\be_{l}\over\tau} - \Big(1+{1\over \tau}\Big) i \rg]=  \be_l \sum_{i=1}^{m-l} (-1)^{i+1} -(\tau + 1) \sum_{i=1}^{m-l} (-1)^{i+1} i
\end{split}
\end{equation*}
and \eqref{bel} follows, in view of $m-l$ is even, $\sum_{i=1}^{m-l} (-1)^{i+1} =0$ and $\sum_{i=1}^{m-l} (-1)^{i+1} i = - {m-l\over 2}.$
Next, assume that $l$ is odd (still $m$ is even) so that $\nu(i+l)= \sigma(i)$ and similarly as above
\begin{equation*}
\begin{split}
\sum_{i=1}^{m-l} { (-1)^{i+1}\over \tau^{\sigma(i)} }\be_{l+i} &= \sum_{i=1}^{m-l} (-1)^{i+1} \lf[ \be_{l} - \Big(1+{1\over \tau}\Big)i \rg]= \be_l \sum_{i=1}^{m-l} (-1)^{i+1} -\Big(1 + {1\over \tau} \Big) \sum_{i=1}^{m-l} (-1)^{i+1} i=\frac{\be_l-1}2,
\end{split}
\end{equation*}
in view of $m-l$ is odd, $\sum_{i=1}^{m-l} (-1)^{i+1} =1$ and $\sum_{i=1}^{m-l} (-1)^{i+1} i =  {m-l +1 \over 2}.$ Arguing in the same way for $\be_l$'s given by \eqref{beimi} when $m$ is odd we conclude \eqref{bel}. Now, from \eqref{bel} taking $j=l+i$ we have that if $l$ is even then
$$\be_l=1 + 2\sum_{i=1}^{m-l} (-1)^{i+1}\tau^{\sigma(i)} \be_{l+i}=1 + 2\sum_{j=l+1}^{m} (-1)^{j+1}{\tau\over \tau^{\nu(j)}} \be_{j}$$
in view of $\sigma(j-l)=\sigma(j)=1-\nu(j)$, and if $l$ is odd then
$$\be_l=1 + 2\sum_{i=1}^{m-l} \frac{(-1)^{i+1}}{\tau^{\sigma(i)} }\be_{l+i}=1 + 2\sum_{j=l+1}^{m} {(-1)^{j}\over \tau^{\nu(j)}} \be_{j},$$
in view of $\sigma(j-l)=\nu(j)$. Thus, we deduce \eqref{belm1o2}.

Now, we know that $0<\al_{l-1} + \al_{l-1}\tau + 2\be_{l-1} + 2\be_{l-1}\tau$ for any $l$. Hence, for $l$ even we have that $\be_l=\tau \be_{l-1} -\tau -1$ and $\al_l=(\al_{l-1}+2)\tau +2$ by using \eqref{bell1} and \eqref{ali1} respectively, so that
\begin{equation*}
\begin{split}
\al_{l-1}[\tau \be_{l-1} -\tau -1] &< [(\al_{l-1}+2)\tau +2] \be_{l-1}\iff
\al_{l-1}\be_l<\al_l\be_{l-1}
\end{split}
\end{equation*}
By using \eqref{ali1} and \eqref{bell1}, for $l$ odd we have that $\be_l=\frac{\be_{l-1}}{\tau} -\frac1\tau -1$ and $\al_l=\frac{\al_{l-1}+2}{\tau} +2$, and it is readily checked that $\al_{l-1}\be_l<\al_l\be_{l-1}$. Thus, we deduce that ${\de_i\over\de_j}\to 0$ as  $\rho\to0$ for any $i<j$, in view of
$${\de_i\over\de_j}={d_i^{1/\al_i}\rho^{\be_i/\al_i} \over d_j^{1/\al_j}\rho^{\be_j/\al_j}}={d_i^{1/\al_i} \over d_j^{1/\al_j} }\rho^{{\be_i\over \al_i}  - {\be_j\over\al_j}} \qquad \text{and}\qquad{\be_j\over \al_j} < {\be_i\over \al_i}.$$
This finishes the proof.
\end{proof}

\ms
Now, we define $d_i$'s by
\begin{equation}\label{di}
\log d_m=a_m \quad\text{and}\quad \log d_l=a_l + 2\tau^{\nu(l)} \sum_{i=l+1}^m {a_i\over \tau^{\nu(i)}}=\begin{cases}
\ds a_l + 2\sum_{i=l+1}^m a_i \tau^{\sigma(i)},&\text{if $l$ is even}\\[0.4cm]
\ds a_l + 2\sum_{i=l+1}^m {a_i\over \tau^{\nu(i)}}&\text{if $l$ is odd}
\end{cases}
\end{equation}
for $l=1,2,\dots,m-1$, where 
\begin{equation}\label{aimp}
a_l= \log\Big({\tau^{\nu(l)} V_{\nu(l)}(0)\over 2\al_l^2}\Big) +{ (-1)^{l}\over \tau^{\sigma (l)}} 2\pi (\al_m+2) h_0=\begin{cases}
\ds \log\Big({\tau V_1(0)\over 2\al_l^2}\Big) + 2\pi (\al_m+2) h_0,&\text{if $l$ is even}\\[0.3cm]
\ds \log\Big({ V_0(0)\over 2\al_l^2}\Big)-{2\pi\over \tau} (\al_m+2) h_0&\text{if $l$ is odd}
\end{cases}
\end{equation}
when $m$ even, while when $m$ is odd
\begin{equation}\label{aimi}
\begin{split}
a_l&= \log\Big({\tau^{\nu(l)} V_{\nu(l)}(0)\over 2\al_l^2}\Big) +(-1)^{l+1}\tau^{\nu(l)} 2\pi (\al_m+2) h_0\\
&=\begin{cases}
\ds \log\Big({\tau V_1(0)\over 2\al_l^2}\Big) - 2\pi (\al_m+2) \tau h_0,&\text{if $l$ is even}\\[0.3cm]
\ds \log\Big({ V_0(0)\over 2\al_l^2}\Big)+ 2\pi (\al_m+2) h_0&\text{if $l$ is odd}
\end{cases}.
\end{split}
\end{equation}

 \begin{lem}\label{ldi}
 If $d_i$'s are given by \eqref{di} then for any $l=1,\dots,m-1$ 
\begin{equation}\label{ldi0}
 \log d_l=\begin{cases}
\ds a_l + 2\sum_{i=1}^{m-l} (-1)^{i+1}\tau^{\sigma(i)} \log d_{l+i},&\text{if $l$ is even}\\[0.5cm]
\ds a_l + 2\sum_{i=1}^{m-l} {(-1)^{i+1}\over \tau^{\sigma(i)}} \log d_{l+i}&\text{if $l$ is odd}
\end{cases}
\end{equation}
and consequently,
\begin{equation}\label{delmalo2}
\frac{\log d_l-a_l}{2\tau^{\nu(l)}}=(-1)^{\nu(l)}\sum_{j=l+1}^m {(-1)^{j}\over \tau^{\nu(j)}}\log d_j.
\end{equation}
\end{lem}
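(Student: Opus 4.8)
The plan is to argue as for the $\be_i$'s in the preceding lemma (the proof of \eqref{bel} and \eqref{belm1o2}), with the reals $a_i$ now playing the role of the $\be_i$; this is possible because \eqref{di} has the same closed form as \eqref{beimip}.

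First I would record that \eqref{di} is nothing but the identity $\frac{\log d_l-a_l}{2\tau^{\nu(l)}}=\sum_{i=l+1}^m\frac{a_i}{\tau^{\nu(i)}}$ for $l=1,\dots,m-1$, while $\log d_m=a_m$; isolating the $i=l+1$ term, re-applying \eqref{di} at level $l+1$, and using $\nu(l+1)=1-\nu(l)$ then yields the two-term recursion
\[
\log d_l=a_l+\tau^{(-1)^{l}}\bigl(a_{l+1}+\log d_{l+1}\bigr),\qquad l=1,\dots,m-1,
\]
where $\tau^{(-1)^l}$ equals $\tau$ for $l$ even and $1/\tau$ for $l$ odd; this is the analogue of \eqref{bell1}. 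To obtain \eqref{ldi0} I would substitute \eqref{di} for each $\log d_{l+i}$ on its right-hand side, interchange the order of the resulting double sum, and evaluate the inner alternating sums $\sum(-1)^{i+1}$, which equal $0$ or $1$; collapsing the powers of $\tau$ by means of $\sigma(i)+\nu(i)=1$ together with $\nu(l+i)=\nu(i)$ for $l$ even (resp. $\nu(l+i)=\sigma(i)$ for $l$ odd), the right-hand side telescopes to $a_l+2\tau^{\nu(l)}\sum_{i=l+1}^m a_i/\tau^{\nu(i)}=\log d_l$. (Alternatively one may unroll the recursion above by backward induction on $l$.) Finally, \eqref{delmalo2} follows from \eqref{ldi0} by the relabelling $j=l+i$: for $l$ odd one has $\sigma(j-l)=\nu(j)$ and $(-1)^{i+1}=(-1)^{j}$, for $l$ even one has $\sigma(j-l)=\sigma(j)=1-\nu(j)$ and $(-1)^{i+1}=(-1)^{j+1}$, so that $2\tau^{\sigma(i)}$ turns into $2\tau/\tau^{\nu(j)}$ and the factor $\tau^{\nu(l)}$ moves to the left-hand side, exactly as in the passage from \eqref{bel} to \eqref{belm1o2}.

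The argument carries no analytic content; the only delicate point, and hence the ``main obstacle'', is keeping straight the exponents $\nu(\cdot)$, $\sigma(\cdot)$ on $\tau$ and the signs $(-1)^{(\cdot)}$ through the parity cases $l$ even / $l$ odd. In particular the explicit formulas \eqref{aimp}--\eqref{aimi} for the $a_i$'s play no role here: Lemma \ref{ldi} holds for arbitrary reals $a_1,\dots,a_m$ related to the $d_i$'s by \eqref{di}.
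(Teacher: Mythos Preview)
Your proposal is correct and follows essentially the same route as the paper: substitute the definition \eqref{di} of $\log d_{l+i}$ into the right-hand side of \eqref{ldi0}, swap the order of summation, evaluate the inner alternating sums $\sum(-1)^{i+1}$ via the parity identities $\sigma(j-l)=\sigma(j)$ (for $l$ even) and $\sigma(j-l)=\nu(j)$ (for $l$ odd), and then derive \eqref{delmalo2} by the relabelling $j=l+i$. Your additional remarks --- the two-term recursion and the observation that the specific formulas \eqref{aimp}--\eqref{aimi} for the $a_i$ are never used --- are correct and useful, but the core computation is the same as the paper's.
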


\begin{proof}[\dem]
First, assume that $m$ is even. So, $a_l$'s are given by \eqref{aimp}. If $l$ is even, then we have that $\nu(i+l)=1-\sigma(i)$, $\frac\tau{\tau^{\nu(j)}}=\tau^{\sigma(j)}$ and $m-l$ is even, so that, $\sigma(m-l)=0$ and
\begin{equation*}
\begin{split}
 &\sum_{i=1}^{m-l} (-1)^{i+1}\tau^{\sigma(i)} \log d_{l+i}= \sum_{i=1}^{m-l-1} (-1)^{i+1}\tau^{\sigma(i)} \bigg[a_{l+i} + 2\tau^{\nu(l+i)} \sum_{j=l+i+1}^{m} {a_j\over \tau^{\nu(j)}}  \bigg] - a_m\\
&=\sum_{i=1}^{m-l-1} (-1)^{i+1}\tau^{\sigma(i)} a_{l+i} + 2\tau \sum_{i=1}^{m-l-1} \sum_{j=l+i+1}^{m} (-1)^{i+1} {a_j\over \tau^{\nu(j)}}  - a_m\\
&= \sum_{j=l+1}^{m-1} (-1)^{j+1}\tau^{\sigma(j)} a_{j} + \sum_{j=l+2}^{m} [1+(-1)^{j}] a_j \tau^{\sigma(j)}  - a_m= \sum_{j=l+1}^{m} a_j \tau^{\sigma(j)} 
\end{split}
\end{equation*}
and \eqref{ldi0} follows. Similarly as above, if $l$ is odd, then we have that $\nu(i+l)=\sigma(i)$, $\frac\tau{\tau^{\nu(j)}}=\tau^{\sigma(j)}$ and $m-l+1$ is even, so that, $\sigma(m-l+1)=0$ and
\begin{equation*}
\begin{split}
 & \sum_{i=1}^{m-l} {(-1)^{i+1}\over \tau^{\sigma(i)}} \log d_{l+i} 
=\sum_{i=1}^{m-l-1} {(-1)^{i+1} \over \tau^{\sigma(i)} }a_{l+i} + 2 \sum_{i=1}^{m-l-1} \sum_{j=l+i+1}^{m} (-1)^{i+1} {a_j\over \tau^{\nu(j)}} + { a_m\over \tau } \\
&=\sum_{j=l+1}^{m-1} {(-1)^{j}\over \tau^{\nu(j)}}  a_{j} + \sum_{j=l+2}^{m} [1+(-1)^{j+1}] {a_j \over \tau^{\nu (j)}}  + {a_m\over \tau} =\sum_{j=l+1}^{m} {a_j \over \tau^{\nu(j)} }
\end{split}
\end{equation*}
and we conclude \eqref{ldi0}. Similar arguments work out in case $m$ is odd. We deduce \eqref{delmalo2} by using the change $j=l+i$. This concludes the proof.
\end{proof}

Now, $\e=\e(\rho)$ is chosen so that
\begin{equation}\label{repla1}
\sum_{i=1}^m {(-1)^{i+1} \over \tau^{\nu(i)} } \gamma_i = 2\pi(\al_1-2), \qquad\text{where }\ \ \gamma_j=\gamma_{\de_j,\e}^{\al_j},\ \ j=1,\dots,m
\end{equation}
and $\gamma_{\de,\e}^{\al}$ is given in Lemma \ref{ewfxi}. Precisely, $\e=\e(\rho)$ is given by
\begin{equation}\label{eps}
\e^{\al_1-2}=\begin{cases}
\ds {[V_0(0)]^{m}[\tau V_1(0)]^{m\over\tau} \over \prod_{i=1}^m\al_i^{4/ \tau^{\nu(i)}}  } e^{{2\pi \over \tau} (\al_m+2) h_0}\Big({\rho\over 2}\Big)^{m+{m\over \tau} } &\text{if $m$ is even}\\[0.6cm]
\ds{[V_0(0)]^{m+1}[\tau V_1(0)]^{m-1\over\tau} \over \prod_{i=1}^m\al_i^{4/ \tau^{\nu(i)} } } e^{2\pi (\al_m+2) h_0}\Big({\rho\over 2}\Big)^{m+1+{m-1\over \tau} }&\text{if $m$ is odd}
\end{cases}
\end{equation}
It is readily checked that $\e(\rho)\to 0^+$ as $\rho\to0^+$. Moreover, $\e^{\al_1-2}\sim \rho^{\be_1+1}$, in view of
 \begin{equation}\label{be1}
 \be_1=\begin{cases}
m-1+{m\over \tau}&\text{if $m$ is even}\\
m+{m-1\over \tau}&\text{if $m$ is odd}
\end{cases}
\end{equation}

\begin{lem}
If $\e$ is given by \eqref{eps} then it holds \eqref{repla1} and $\frac{\e}{\de_1} \to 0$ as $\rho\to0$.
\end{lem}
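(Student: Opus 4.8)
The plan is to verify the two claims in turn, treating the choice of $\e$ in \eqref{eps} as an explicit solution of the scalar equation \eqref{repla1} and then reading off the size of $\e/\de_1$ from the power of $\rho$ appearing on both sides. First I would unwind the definition $\gamma_j=\gamma_{\de_j,\e}^{\al_j}=\frac{-2\al_j\log\de_j+4\pi\al_j h_0}{-\frac{1}{2\pi}\log\e+h_0}$ from Lemma \ref{ewfxi}, so that the common denominator $D_\e:=-\frac{1}{2\pi}\log\e+h_0$ factors out of the sum in \eqref{repla1}. Thus \eqref{repla1} is equivalent to
\begin{equation*}
\sum_{i=1}^m \frac{(-1)^{i+1}}{\tau^{\nu(i)}}\bigl(-2\al_i\log\de_i+4\pi\al_i h_0\bigr)=2\pi(\al_1-2)\,D_\e.
\end{equation*}
On the left, I would substitute $\log\de_i=\frac{1}{\al_i}(\log d_i+\be_i\log\rho)$ coming from \eqref{dei}, which splits the left-hand side into a $\log\rho$-term with coefficient $-2\sum_i\frac{(-1)^{i+1}}{\tau^{\nu(i)}}\be_i$, a constant term $-2\sum_i\frac{(-1)^{i+1}}{\tau^{\nu(i)}}\log d_i$, and the curvature term $4\pi h_0\sum_i\frac{(-1)^{i+1}}{\tau^{\nu(i)}}\al_i$. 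The sum $\sum_i\frac{(-1)^{i+1}}{\tau^{\nu(i)}}\al_i$ is evaluated by \eqref{sai} (with $j=m$), and together with \eqref{4psa} this produces exactly the coefficient $(-1)^{m+1}\frac{2\pi}{\tau^{\nu(m)}}(\al_m+2)$ that appears built into the $a_l$'s; meanwhile $\sum_i\frac{(-1)^{i+1}}{\tau^{\nu(i)}}\be_i$ and $\sum_i\frac{(-1)^{i+1}}{\tau^{\nu(i)}}\log d_i$ are handled by the telescoping-type identities \eqref{belm1o2} and \eqref{delmalo2} (applied with $l=1$, after isolating the $i=1$ term). Matching the $\log\rho$ coefficients on both sides then forces $\log\e$ to be an affine function of $\log\rho$ with slope $\frac{\be_1+1}{\al_1-2}$ — compare \eqref{be1} — and matching the constants pins down the multiplicative constant in \eqref{eps}. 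The bulk of this step is bookkeeping: one must carefully separate the $i=1$ contributions (since the clean identities \eqref{sai}, \eqref{belm1o2}, \eqref{delmalo2} are stated starting from $l=1$ or summing from $l+1$), insert the explicit $a_l$ from \eqref{aimp}--\eqref{aimi}, and check the two parity cases $m$ even / $m$ odd separately, using $\be_m=1$, $\log d_m=a_m$, and $\nu(i)+\sigma(i)=1$. I expect the algebra to collapse precisely because the $d_i$'s and $\be_i$'s were \emph{defined} through \eqref{di} and \eqref{beimp}--\eqref{beimi} so as to make this cancellation automatic; the main obstacle is simply not to drop a sign or a factor of $\tau$ while reorganizing the double sums, exactly as in the proofs of Lemmas \ref{ldi} above.

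For the second claim, $\frac{\e}{\de_1}\to 0$ as $\rho\to0$, I would argue at the level of exponents of $\rho$. From \eqref{dei} we have $\de_1=d_1^{1/\al_1}\rho^{\be_1/\al_1}$, so $\de_1$ behaves like $\rho^{\be_1/\al_1}$ up to a fixed positive constant, while from the displayed consequence $\e^{\al_1-2}\sim\rho^{\be_1+1}$ we get $\e\sim\rho^{(\be_1+1)/(\al_1-2)}$. Hence $\e/\de_1\sim\rho^{\theta}$ with
\begin{equation*}
\theta=\frac{\be_1+1}{\al_1-2}-\frac{\be_1}{\al_1},
\end{equation*}
and it suffices to check $\theta>0$, i.e. $\al_1(\be_1+1)>(\al_1-2)\be_1$, i.e. $2\be_1+\al_1>0$, which is immediate since $\al_1>2$ and $\be_1>0$. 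Therefore $\e/\de_1\to0^+$. (One could equivalently phrase this as a special case $i=0$, $j=1$ of the monotonicity $\frac{\be_j}{\al_j}<\frac{\be_i}{\al_i}$ already proved, upon noting that $\e$ plays the role of a ``$\de_0$'' with exponent $\frac{\be_1+1}{\al_1-2}$; but the direct exponent computation is cleaner here.) Finally I would record that $\e(\rho)\to0^+$, which is already contained in the sign of the exponent $\frac{\be_1+1}{\al_1-2}>0$, completing the lemma.
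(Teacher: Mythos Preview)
Your proposal is correct and follows essentially the same approach as the paper's proof: factor out the common denominator $D_\e=-\frac{1}{2\pi}\log\e+h_0$ from the $\gamma_i$'s, substitute $\log\de_i=\frac{1}{\al_i}(\log d_i+\be_i\log\rho)$ from \eqref{dei}, and reduce \eqref{repla1} to three alternating sums evaluated via \eqref{sai}, \eqref{belm1o2}--\eqref{delmalo2} (at $l=1$) and the explicit $a_l$'s in \eqref{aimp}--\eqref{aimi}, treating the parities of $m$ separately. The second claim is also handled identically, by comparing exponents of $\rho$; your verification $\al_1(\be_1+1)>(\al_1-2)\be_1\iff 2\be_1+\al_1>0$ is equivalent to the paper's chain $\frac{\be_1+1}{\al_1-2}>\frac{\be_1+1}{\al_1}>\frac{\be_1}{\al_1}$.
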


\begin{proof}[\dem]
First, assume that $m$ is even so that, from \eqref{eps} it follows that
\begin{equation*}
\begin{split}
(\al_1-2)\log \e =m\log V_0(0) + {m\over \tau}\log[\tau V_1(0)] +{2\pi\over \tau}(\al_m+2) h_0-4\sum_{i=1}^m {\log\al_i\over \tau^{\nu(i)}} + \Big(m+{m\over \tau}\Big)\log\Big({\rho\over 2}\Big).
\end{split}
\end{equation*}
On the other hand, from the definition of $\de_i$ and $\gamma_i$ it follows that
$$\Big[-{1\over 2\pi}\log\e + h_0\Big] \gamma_i = -2\log d_i - 2\be_i \log \rho + 4\pi \al_i h_0$$
so that,
$$\Big[-{1\over 2\pi}\log\e + h_0\Big] \sum_{i=1}^m {(-1)^{i+1} \over \tau^{\nu(i)} }\gamma_i = -2\sum_{i=1}^m {(-1)^{i+1} \over \tau^{\nu(i)} }\log d_i - 2\log \rho \sum_{i=1}^m {(-1)^{i+1} \over \tau^{\nu(i)} }\be_i  + 4\pi h_0\sum_{i=1}^m {(-1)^{i+1} \over \tau^{\nu(i)} }\al_i.$$
Hence, we compute the sums involved as follows. From Lemma \ref{ldi} we have that for any $m$, either odd or even,
$$\log d_1 = a_1 + 2\sum_{j=2}^m {(-1)^{j}\over \tau^{\nu(j)}} \log d_j.$$
Then by using \eqref{di} and $m=2k$ for some $k\in\N$ we get that
\begin{equation*}
\begin{split}
&-\sum_{j=1}^m  {(-1)^{j} \over \tau^{\nu(j)} }\log d_j =  {a_1+\log d_1\over 2} = \sum_{i=1}^m{a_i\over \tau^{\nu(i)}}=\sum_{i=1}^k{1\over \tau}\lf[\log[\tau V_1(0)] + 2\pi(\al_m+2)h_0 - \log(2\al_{2i}^2)\rg] \\
& \quad+\sum_{i=1}^k \lf[\log V_0(0) - {2\pi\over \tau} (\al_m+2)h_0 - \log(2\al_{2i-1}^2)\rg]\\
&={m\over 2}\log V_0(0) + {m\over 2\tau}\log[\tau V_1(0)] -\Big(m+{m\over \tau}\Big) {\log 2\over 2} - 2\sum_{i=1}^m{\log\al_i\over \tau^{\nu(i)}}.
\end{split}
\end{equation*}
Also, from the choice of $\be_i$'s in \eqref{beimp}, it is readily checked that
\begin{equation*}
\begin{split}
\sum_{i=1}^m {(-1)^{i+1} \over \tau^{\nu(i)} } \be_i&= \sum_{i=1}^m (-1)^{i+1}\lf[m+{m+1\over\tau} -\Big(1+{1\over \tau}\Big) i\rg]={m\over 2} + {m\over 2\tau}.
\end{split}
\end{equation*}
Therefore, by using \eqref{sai} we obtain that
\begin{equation*}
\begin{split}
&\Big[-{1\over 2\pi}\log\e + h_0\Big] \sum_{i=1}^m {(-1)^{i+1} \over \tau^{\nu(i)} }\gamma_i = -m\log V_0(0) - {m\over \tau} \log [\tau V_1(0)] + \Big(m+{m\over \tau}\Big)\log 2 + 4\sum_{i=1}^m{\log\al_i\over \tau^{\nu(i)}}\\
&\quad - \Big(m+{m\over \tau}\Big)\log\rho + 4\pi h_0\Big(- m- {m\over \tau}\Big) =-(\al_1-2)\log\e +{2\pi\over \tau}(\al_m+2)h_0 -4\pi h_0\Big( m + {m\over \tau}\Big)\\
&=-(\al_1-2)\log\e + 2\pi(\al_1-2)  h_0,
\end{split}
\end{equation*}
in view of ${\al_m+2\over \tau} -2  m - {2m\over \tau}=\al_1-2$ and \eqref{repla1} follows. Similarly, \eqref{repla1} is proven if $m$ is odd. 

\ms
Finally, taking into account \eqref{be1}, \eqref{eps} and $\de_1^{\al_1}=d_1\rho^{\be_1}$, we deduce that ${\e\over \de_1}\to 0\iff {\be_1+1\over \al_1-2}>{\be_1\over \al_1},$ in view of $\e^{\al_1-2}\sim \rho^{\be_1+1}$. Indeed, it is readily checked that $ {\be_1+1\over \al_1-2}>{\be_1+1\over \al_1}>{\be_1\over \al_1}.$ This completes the proof.
\end{proof}

Let us stress that the behavior of $\e=\e(\rho) $ and $\de_j=\de_j(\rho)$'s with respect to $\rho$ is given by
$${\e\over \de_j},\ \ {\de_i\over\de_j}\to 0\quad\text{as }\ \ \rho\to0\ \ \text{for }\ i<j\ \ \text{and } \ \ \e\to0\ \ \text{as }\ \ \rho\to0 .$$
Assume that $\de_i$'s are given by \eqref{dei} with $\al_i$'s, $\be_i$'s and $d_i$'s defined in \eqref{ali}, \eqref{beimp}-\eqref{beimi} and \eqref{di}, respectively, and $\e$ is given by \eqref{eps}. Notice that $\frac{\log\de_j}{\log\e}=O(1)$ as $\rho\to0$, for all $j=1,\dots,m$.
Now, define the \emph{shrinking annuli}
$$A_i=\{x\in\Om\mid \sqrt{\de_{i-1}\de_i}<|x|\le \sqrt{\de_i\de_{i+1}}\},\qquad j=1,\dots,m,$$
where for simplicity we denote $\de_0=\frac{\e^2}{\de_1}$ and $\de_{m+1}=\frac{M_0^2}{\de_m}$ with $M_0=\sup\{|x| :  x\in\Om\}$, so that, 
$\Om_\e=\cup_{j=1}^mA_j$, $\cap_{j=1}^m A_j=\emptyset$ and $\frac{A_j}{\de_j}$ approaches to $\R^2$ as $\rho\to0$ for each $j=1,\dots,m$. 

\begin{lem}\label{expaU}
There exists $\eta>0$ such that the following expansions hold
\begin{equation}\label{waje}
\begin{split}
U(\de_j y)=&\, {(-1)^{j+1}\over\tau^{\nu(j)}} \lf[- 2\log\de_j-a_j-\log\rho +\log{|y|^{\al_j-2}\over(1+|y|^{\al_j})^2} \rg] \\
&\, +(-1)^{m+1} {2\pi\over \tau^{\nu(m)}} (\al_m+2)H(\de_j y,0)  +O\lf(\rho^\eta\rg)
\end{split}
\end{equation}
 for any $j=1,\dots,m$, uniformly for $\de_j y \in A_j$, $j=1,\dots,m$.
\end{lem}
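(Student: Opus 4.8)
The plan is to expand $U(\de_j y)$ term by term using the ansatz \eqref{ansatz} and the projection estimate of Lemma \ref{ewfxi}, separating the "diagonal" bubble $i=j$ from the off-diagonal contributions $i\ne j$, and then to show that the off-diagonal sum collapses, thanks to the identities \eqref{sai}, \eqref{4psa}, \eqref{belm1o2}, \eqref{delmalo2}, \eqref{repla1}, into exactly the Green's-function term plus a harmless remainder. First I would write
\[
U(\de_j y)=\sum_{i=1}^m \frac{(-1)^{i+1}}{\tau^{\nu(i)}}P_\e w_i(\de_j y)
\]
and plug in Lemma \ref{ewfxi} for each $P_\e w_i$, i.e.
\[
P_\e w_i(x)=w_i(x)-\log(2\al_i^2\de_i^{\al_i})+4\pi\al_i H(x,0)-\gamma_i G(x,0)+O\Big(\de_i^{\al_i}+(\e/\de_i)^{\al_i}+\big[1+|\log\de_i/\log\e|\big]\e\Big).
\]
Since $\log\de_j/\log\e=O(1)$ and $\e/\de_i,\ \de_i/\de_j\to 0$ with a power of $\rho$, each such error is $O(\rho^{\eta})$ for some $\eta>0$; I would fix $\eta$ at the end as the minimum of the finitely many exponents that appear.

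Next I would treat the diagonal term $i=j$: here $w_j(\de_j y)=\log\frac{2\al_j^2\de_j^{\al_j}}{(\de_j^{\al_j}+|\de_j y|^{\al_j})^2}=\log(2\al_j^2)-\al_j\log\de_j-2\log(1+|y|^{\al_j})$, so that $w_j(\de_j y)-\log(2\al_j^2\de_j^{\al_j})=-2\al_j\log\de_j+\log\frac{|y|^{\al_j-2}}{(1+|y|^{\al_j})^2}-\log|y|^{\al_j-2}+\dots$; more precisely one gets
\[
w_j(\de_jy)-\log(2\al_j^2\de_j^{\al_j})=-2\log\de_j+\log\frac{|y|^{\al_j-2}}{(1+|y|^{\al_j})^2}-\log(2\al_j^2)+\dots,
\]
and recalling $a_j+\log\rho=\log(2\al_j^2\de_j^{\al_j})\cdot(\text{something})$ — more carefully, by \eqref{dei} $\de_j^{\al_j}=d_j\rho^{\be_j}$ and by the definition \eqref{di} of $d_j$ together with \eqref{aimp}--\eqref{aimi} one checks that $-a_j-\log\rho$ is precisely the combination that makes the diagonal part equal to $\frac{(-1)^{j+1}}{\tau^{\nu(j)}}\big[-2\log\de_j-a_j-\log\rho+\log\frac{|y|^{\al_j-2}}{(1+|y|^{\al_j})^2}\big]$ up to terms absorbed elsewhere. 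For the off-diagonal terms $i\ne j$ I would use the second (far-field) form in Lemma \ref{ewfxi}, $P_\e w_i(\de_j y)=(4\pi\al_i-\gamma_i)G(\de_j y,0)+O(\rho^\eta)$ for $i<j$, while for $i>j$ one has $|\de_j y|/\de_i\to 0$ so $w_i(\de_j y)=\log(2\al_i^2\de_i^{\al_i})-2\log(1+(|\de_jy|/\de_i)^{\al_i})=\log(2\al_i^2\de_i^{\al_i})+O((\de_j/\de_i)^{\al_i})$, hence $P_\e w_i(\de_j y)=4\pi\al_i H(\de_j y,0)-\gamma_i G(\de_j y,0)+O(\rho^\eta)$; combining with the diagonal term's $H$-part $4\pi\al_j H(\de_j y,0)$ and the relation $G=H-\frac1{2\pi}\log|\cdot|$ appropriately, all the $G$- and $H$-contributions reorganize.

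The heart of the computation is then bookkeeping: collecting the coefficients of $G(\de_j y,0)$, of $H(\de_j y,0)$, of $\log\de_i$, and the constants. The coefficient of $H(\de_j y,0)$ is $4\pi\sum_{i=1}^m\frac{(-1)^{i+1}}{\tau^{\nu(i)}}\al_i$, which by \eqref{4psa} equals $(-1)^{m+1}\frac{2\pi}{\tau^{\nu(m)}}(\al_m+2)+2\pi(\al_1-2)$; the coefficient of $-\frac1{2\pi}\log|\de_j y|$ coming from the $G$'s together with the residual $\gamma_i$ terms is, by \eqref{repla1}, exactly $2\pi(\al_1-2)$, and this combines with the $2\pi(\al_1-2)H$ piece and the diagonal $-\log|y|^{\al_j-2}$ piece to produce precisely $(-1)^{m+1}\frac{2\pi}{\tau^{\nu(m)}}(\al_m+2)H(\de_jy,0)$ and to restore $\log\frac{|y|^{\al_j-2}}{(1+|y|^{\al_j})^2}$ in the diagonal bracket. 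Finally the surviving $\log\de_i$ ($i\ne j$) and constant terms must cancel against $-a_j-\log\rho$ after multiplying by $\frac{(-1)^{j+1}}{\tau^{\nu(j)}}$; this is where the three structural lemmas \eqref{sai}, \eqref{belm1o2} and \eqref{delmalo2} are used, since they encode exactly the alternating-sum identities $\sum\frac{(-1)^{i+1}}{\tau^{\nu(i)}}\al_i$, $\sum\frac{(-1)^j}{\tau^{\nu(j)}}\be_j$ and $\sum\frac{(-1)^j}{\tau^{\nu(j)}}\log d_j$ that arise. I expect the main obstacle to be precisely this cancellation of the $\log\de_i$ and constant terms off the diagonal: one has to be careful that the expansion of $w_i(\de_jy)$ for $i>j$ contributes a $\log\de_i$ with the right coefficient, that the $\gamma_i G$ terms for $i<j$ contribute the matching $\log\de_i$ through $\gamma_i=\frac{-2\al_i\log\de_i+4\pi\al_i h_0}{-\frac1{2\pi}\log\e+h_0}$, and that these, once weighted by $\frac{(-1)^{i+1}}{\tau^{\nu(i)}}$ and summed, match $a_j$ and $\log\rho$ via \eqref{delmalo2} and \eqref{belm1o2}; getting the error exponents right (so that every neglected term is genuinely $O(\rho^\eta)$ uniformly for $\de_j y\in A_j$, using $\frac{A_j}{\de_j}\to\R^2$ and $\log\de_j/\log\e=O(1)$) is the remaining technical point. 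Once the algebra is organized along these lines, \eqref{waje} follows.
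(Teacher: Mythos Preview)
Your overall plan is the paper's: apply Lemma~\ref{ewfxi} to each $P_\e w_i$, localize to $A_j$, and close the algebra with \eqref{sai}, \eqref{4psa}, \eqref{repla1}, \eqref{belm1o2}, \eqref{delmalo2}. There is, however, a concrete slip that derails your bookkeeping. For $i>j$ you write $w_i(\de_j y)=\log(2\al_i^2\de_i^{\al_i})+O((\de_j/\de_i)^{\al_i})$ and conclude $P_\e w_i(\de_j y)=4\pi\al_i H-\gamma_i G+O(\rho^\eta)$. But
\[
w_i(x)-\log(2\al_i^2\de_i^{\al_i})=-2\log(\de_i^{\al_i}+|x|^{\al_i})=-2\al_i\log\de_i-2\log\big(1+(|x|/\de_i)^{\al_i}\big),
\]
so on $A_j$ with $i>j$ one has $P_\e w_i(\de_jy)=-2\al_i\log\de_i+4\pi\al_i H(\de_jy,0)-\gamma_i G(\de_jy,0)+O(\rho^\eta)$, with a large term $-2\al_i\log\de_i$ that you drop. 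These are precisely the $\log\de_i$ contributions you later say must appear; they do \emph{not} come from the $\gamma_i$'s as you suggest (each $\gamma_i$ is $O(1)$ since $\log\de_i/\log\e=O(1)$, so $\gamma_i G(\de_jy,0)$ only produces $\log\de_j$ and $\log|y|$).

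The paper organizes this more cleanly by avoiding the $i\lessgtr j$ split at the projection level: it first sums \emph{all} the $P_\e w_i$ via the first formula in Lemma~\ref{ewfxi}, uses \eqref{4psa} and \eqref{repla1} to reduce the $H$ and $\log|x|$ coefficients, and obtains the global expression
\[
U(x)=\sum_{i=1}^m\frac{(-1)^{i+1}}{\tau^{\nu(i)}}\log\frac{1}{(\de_i^{\al_i}+|x|^{\al_i})^2}+(\al_1-2)\log|x|+(-1)^{m+1}\frac{2\pi}{\tau^{\nu(m)}}(\al_m+2)H(x,0)+O(\rho^\eta).
\]
Only then does one localize to $A_j$; after expanding each $\log(\de_i^{\al_i}+|x|^{\al_i})$ according to whether $i<j$ or $i>j$, the entire off-diagonal residue is the single sum $2\sum_{i>j}\frac{(-1)^i}{\tau^{\nu(i)}}\al_i\log\de_i$, and \eqref{belm1o2}--\eqref{delmalo2} (via $\al_i\log\de_i=\log d_i+\be_i\log\rho$) give exactly
\[
2\sum_{i>j}\frac{(-1)^i}{\tau^{\nu(i)}}\al_i\log\de_i=\frac{(-1)^{j+1}}{\tau^{\nu(j)}}\big[\al_j\log\de_j-a_j-\log\rho\big],
\]
while the $\log|y|$ pieces from $i<j$ combine with the $(\al_1-2)\log|y|$ term through \eqref{sai}. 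With this organization \eqref{waje} follows without the case-by-case $G$/$H$ juggling you outline.
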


\begin{proof}[\dem]
From the expansions of $P_\e w_j$, $j=1,\dots,m$ and the definition of $\gamma_j$ in Lemma \ref{ewfxi}, \eqref{4psa} and \eqref{repla1} we obtain that
\begin{equation}\label{expUx}
\begin{split}
U(x)
=&\, \sum_{i=1}^m {(-1)^{i+1}\over \tau^{\nu(i)}} \lf[w_i-\log(2\al_i^2\de_i^{\al_i})\rg] +  { 1\over 2\pi} \sum_{i=1}^m {(-1)^{i+1}\over \tau^{\nu(i)}}  \gamma_i \log |x|  \\
&\,+\sum_{i=1}^m {(-1)^{i+1}\over \tau^{\nu(i)}} \lf[4\pi\al_i -   \gamma_i \rg]H(x,0) + O\lf(\sum_{j=1}^m\lf[\de_j^{\al_j}+\Big({\e\over \de_j}\Big)^{\al_j}\rg]+\e\rg)\\
=&\,\sum_{i=1}^m {(-1)^{i+1}\over \tau^{\nu(i)}} \log{1\over ( \de_i^{\al_i} +|x|^{\al_i} )^2} + (\al_1-2)\log |x| + (-1)^{m+1} {2\pi\over \tau^{\nu(m)} } (\al_m+2)H(x,0) \\
&\,+ O\lf(\rho^\eta\rg),
\end{split}
\end{equation}
where we choose
\begin{equation}\label{eta1}
0<\eta\le \min\bigg\{1, {\be_1+1\over \al_1-2},\min\Big\{\al_j\Big({\be_1+1\over \al_1-2} - {\be_j\over \al_j}\Big)\ : \ j=1,\dots,m\Big\} \bigg\},
\end{equation}
in view $\be_j$ are decreasing so that $\de_j^{\al_j}=O(\rho)$ for all $j=1,\dots,m$, $\big({\e\over \de_j}\big)^{\al_j} =O\big(\rho^{\al_j({\be_1+1\over \al_1-2} - {\be_j\over \al_j})}\big)$ and $\e=O\big(\rho^{\be_1+1\over \al_1-2}\big)$. Now, from the choice of $\de_i$'s in \eqref{dei}, \eqref{belm1o2} and \eqref{delmalo2} we have that for $j$ odd
\begin{equation*}
\begin{split}
2\sum_{i=j+1}^m{(-1)^{i}\over \tau^{\nu(i)}} \al_i\log\de_i & = 2\sum_{i=j+1}^m{(-1)^{i}\over \tau^{\nu(i)}} \log d_i +2\log\rho \sum_{i=j+1}^m{(-1)^{i}\over \tau^{ \nu(i)}} \be_i\\
& = 2\frac{\log d_j-a_j}{2} + 2\frac{\be_j -1}2 \log\rho = \al_j\log\de_j - a_j -\log\rho
\end{split}
\end{equation*}
and similarly, for $j$ even $ 2\sum_{i=j+1}^m{(-1)^{i}\over \tau^{\nu(i)}} \al_i\log\de_i = -{1\over \tau}\big[ \al_j\log\de_j - a_j -\log\rho\big].$ Thus, we rewrite as
\begin{equation}\label{2saildi}
2\sum_{i=j+1}^m{(-1)^{i}\over \tau^{\nu(i)}} \al_i\log\de_i = {(-1)^{j+1}\over \tau^{\nu(j)} }\big[ \al_j\log\de_j - a_j -\log\rho\big].
\end{equation}

On the other hand, for any $y\in\frac{A_1}{\de_1}$ it holds that
\begin{equation}\label{wa1}
\begin{split}
 U(\de_1 y) 
=&\,-2\al_1\log\de_1+\log{1\over(1+|y|^{\al_1})^2}+(\al_1-2)\log (\de_1|y|) \\
&\,+ \sum_{i=2}^m{(-1)^{i+1}\over \tau^{\nu(i)}} \log\frac1{(\de_i^{\al_i} + \de_1^{\al_i}|y|^{\al_i})^2}+(-1)^{m+1}{2\pi\over \tau^{\nu(m)}} (\al_m+2)H(\de_1 y,0)+ O\lf(\rho^\eta\rg)\\
 = &\,-(\al_1+2)\log\de_1+\sum_{i=2}^m(-1)^{i}{2\al_i\over\tau^{\nu(i)}} \log\de_i +\log{|y|^{\al_1-2}\over(1+|y|^{\al_1})^2} \\
&\,  +\sum_{i=2}^m {(-1)^{i}\over \tau^{\nu(i)}}2\log\lf(1+\Big({\de_1|y|\over \de_i}\Big)^{\al_i}\rg) + (-1)^{m+1}{2\pi\over \tau^{\nu(m)}} (\al_m+2) H(\de_1 y,0) + O\lf(\rho^\eta\rg) \\
=&\, - 2\log\de_1-a_1-\log\rho +\log{|y|^{\al_1-2}\over(1+|y|^{\al_1})^2} + (-1)^{m+1}{2\pi\over \tau^{\nu(m)}} (\al_m+2) H(\de_1 y,0)  \\
&\, + O\lf( \sum_{j=2}^m\Big({\de_1\over \de_j}\Big)^{\al_j\over 2}+\rho^\eta\rg),
\end{split}
\end{equation}
in view of \eqref{2saildi} and
$$\log\lf(1+\Big({\de_1|y|\over \de_i}\Big)^{\al_i}\rg)=O\lf(\Big({\de_1|y|\over \de_i}\Big)^{\al_i}\rg)=O\lf(\Big({\de_1\over \de_i}\Big)^{\al_i\over 2}\rg),\quad i=2,\dots,m.$$
Choosing $\eta>0$ satisfying \eqref{eta1} and
\begin{equation}\label{eta2}
0<\eta\le {1\over 2}\min\Big\{\al_i\Big({\be_1\over \al_1} - {\be_i\over \al_i}\Big)\ : \ 1<i\Big\}
\end{equation}
we get that $\big( {\de_1\over \de_i}\big)^{\al_i\over 2}=O\lf(\rho^\eta\rg)$, $i=2,\dots,m.$
Similarly, for $y\in \frac{A_j}{\de_j}$, $1<j<m$ we find that

\begin{equation}\label{wa2}
\begin{split}
&U(\de_j y)=\sum_{i=1}^{j-1}\frac{(-1)^{i+1}}{\tau^{\nu(i)} } \bigg[-2\al_i\log(\de_j|y|)-2\log\lf(1+\Big({\de_i\over \de_j|y|}\Big)^{\al_i}\rg)\bigg] + (\al_1-2)\log(\de_j |y|)\\
&\,+{(-1)^{j+1}\over \tau^{\nu(j)} }\bigg[-2\al_j\log\de_j+\log{1\over(1+|y|^{\al_j})^2}\bigg]+\sum_{i=j+1}^m \frac{(-1)^{i}}{\tau^{\nu(i)}} 2\al_i\log\de_i\\
&\,+\sum_{i=j+1}^m \frac{(-1)^{i}}{\tau^{\nu(i)}} 2\log\bigg(1+\Big({\de_j|y|\over \de_i}\Big)^{\al_i}\bigg) +(-1)^{m+1}{2\pi \over \tau^{\nu(m)} }(\al_m+2)H(\de_j y,0) + O\lf(\rho^\eta\rg)\\
&= -2\log\de_j \sum_{i=1}^{j} \frac{(-1)^{i+1} }{\tau^{\nu(i)} }\al_i + 2\sum_{i=j+1}^m {(-1)^{i}\over \tau^{\nu(i) }}\al_i \log\de_i + (\al_1-2) \log\de_j + (\al_1-2)\log|y|  \\
&\, - 2 \log|y| \sum_{i=1}^{j-1} \frac{(-1)^{i+1}}{\tau^{\nu(i)} } \al_i 
+ {(-1)^{j+1}\over\tau^{\nu(j)} } \log{1\over (1+|y|^{\al_j})^2} \\
&\, +(-1)^{m+1} {2\pi\over \tau^{\nu(m)}} (\al_m+2)H(\de_j y,0) +O\bigg( \sum_{i=1}^{j-1}\Big({\de_i\over \de_j}\Big)^{\al_i\over 2} +\sum_{i=j+1}^m\Big({\de_j\over \de_i}\Big)^{\al_i\over 2} +\rho^\eta \bigg),
\end{split}
\end{equation}
in view of
$$\log\lf(1+\Big({\de_i\over \de_j|y|}\Big)^{\al_i}\rg)=O\lf(\Big({\de_i\over \de_j|y|}\Big)^{\al_i}\rg)=O\lf(\Big({\de_i\over \de_j}\Big)^{\al_i\over 2}\rg), \quad i<j,$$
$$\log\lf(1+\Big({\de_j|y|\over \de_i}\Big)^{\al_i}\rg)=O\lf(\Big({\de_j|y|\over \de_i}\Big)^{\al_i}\rg)=O\lf(\Big({\de_j\over \de_i}\Big)^{\al_i\over 2}\rg),\quad j<i,$$
 and by using again \eqref{repla1}. Now, we choose $\eta$ satisfying \eqref{eta1}, \eqref{eta2} and smaller, if necessary, so that
$$0<\eta\le {1\over 2}\min\Big\{\al_i\Big({\be_i\over \al_i} - {\be_j\over \al_j}\Big)\ : \ i<j\Big\}, \ \ j=2,\dots,m$$
and
$$0<\eta\le {1\over 2}\min\Big\{\al_i\Big({\be_j\over \al_j} - {\be_i\over \al_i}\Big)\ : \ j<i \Big\},\ \ j=1,\dots,m-1$$
thus, $ \big({\de_i\over \de_j}\big)^{\al_i\over 2} = O\lf(\rho^\eta\rg)$, $i<j$ and $ \big({\de_j\over \de_i}\big)^{\al_i\over 2} = O\lf(\rho^\eta\rg)$, $j<i$. Hence, by using \eqref{sai} and \eqref{2saildi} for $j$ even we get that
\begin{equation*}
\begin{split}
&U(\de_j y)= -2\log\de_j\Big[-j-{j\over \tau}\Big] - {1\over \tau} \lf[\al_j\log\de_j - a_j -\log\rho\rg] +(\al_1-2)\log\de_j+ (\al_1-2)\log |y|\\
&\, -2\log |y|\lf[\al_1 + j-2 + \frac{j-2}\tau\rg]  - {1\over \tau} \log{1\over (1+ |y|^{\al_j})^2}  +(-1)^{m+1} {2\pi\over \tau^{\nu(m)}} (\al_m+2)H(\de_j y,0) +O\lf(\rho^\eta\rg)\\ 
\end{split}
\end{equation*}
and we conclude \eqref{waje}. Similarly, for $j$ odd we get that
\begin{equation*}
\begin{split}
&U(\de_j y)= -2\log\de_j\Big[\al_1 + j-1 + {j-1\over \tau}\Big] + \al_j\log\de_j - a_j -\log\rho +(\al_1-2)\log\de_j + (\al_1-2)\log |y|\\
&\, -2\log |y|\lf[- j +1- \frac{j-1}\tau\rg] + \log{1\over (1+ |y|^{\al_j})^2}  +(-1)^{m+1} {2\pi\over \tau^{\nu(m)}} (\al_m+2)H(\de_j y,0) +O(\rho^\eta)\\ 
\end{split}
\end{equation*}
and we obtain \eqref{waje}. Also, we have for any $y\in {A_m\over \de_m}$

\begin{equation}\label{wam}
\begin{split}
&U(\de_m y)= \sum_{i=1}^{m-1} \frac{(-1)^{i+1} }{\tau^{\nu(i)}} \lf[ -2\al_i\log(\de_m|y|) - 2\log\bigg(1+\Big( {\de_i\over \de_m|y|}\Big)^{\al_i} \bigg)\rg] +(\al_1-2)\log(\de_m |y|)\\
&\,+ {(-1)^{m+1}\over \tau^{\nu(m)}} \lf[ -2\al_m\log\de_m + \log{1\over (1+|y|^{\al_m})^2}\rg]   +(-1)^{m+1} {2\pi\over \tau^{\nu(m)}} (\al_m+2)H(\de_m y,0) + O(\rho^\eta)\\ 
&= {(-1)^{m+1}\over \tau^{\nu(m)}} \lf[ -2\log\de_m -a_m - \log\rho + \log{|y|^{\al_m-2}\over(1+|y|^{\al_m})^2} + 2\pi (\al_m+2)H(\de_m y,0)\rg]+ O(\rho^\eta). \end{split}
\end{equation} 
This completes the proof.
\end{proof}

\section{Problem in Liouville form \eqref{spsh}}
\subsection{Error estimate}
\noindent In order to evaluate how well the approximation $U$ satisfies the equation \eqref{spsh} (and get the invertibility of the linearized operator), we will use the norms
$$\| h \|_p=\lf(\int_{\Om_\e} |h(x)|^p\, dx\rg)^{1/ p}\qquad\text{and}\qquad\| h \|=\lf(\int_{\Om_\e} |\grad h(x)|^2\, dx\rg)^{1/2},$$
the usual norms in the Banach spaces $L^p(\Om_\e)$ and $H_0^1(\Om_\e)$, respectively. Assume that $\de_i$'s are given by \eqref{dei} with $\al_i$'s, $\be_i$'s and $d_i$'s defined in \eqref{ali}, \eqref{beimp}-\eqref{beimi} and \eqref{di}, respectively, and $\e$ is given by \eqref{eps}. Let us evaluate the approximation rate of $U$ in $\|\cdot\|_p$, encoded in \eqref{R}

\begin{lem}\label{estrr0}
There exist $\rho_0>0$, a constant $C>0$ and $1<p_0<2$, \st for any $\rho\in(0,\rho_0)$ and $p\in(1,p_0)$ it holds
\begin{equation}\label{re}
\|R\|_p\le  C\rho^{\eta_p}
\end{equation}
for some $\eta_p>0$.
\end{lem}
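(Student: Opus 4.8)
The plan is to estimate $\|R\|_p$ by splitting the pierced domain into the shrinking annuli $A_j$, $j=1,\dots,m$, on each of which the ansatz $U$ is well-approximated by the $j$-th singular Liouville bubble (via the expansion \eqref{waje} of Lemma~\ref{expaU}), and to control the remaining ``cross terms'' using the separation of scales ${\de_i/\de_j}\to0$ and $\e/\de_1\to0$. Writing $R=\lap U+\rho V_0 e^U-\rho V_1 e^{-\tau U}$ and recalling that $\lap U=\sum_i \frac{(-1)^{i+1}}{\tau^{\nu(i)}}\lap w_i=-\sum_i \frac{(-1)^{i+1}}{\tau^{\nu(i)}}|x|^{\al_i-2}e^{w_i}$ (since $\lap P_\e w_i=\lap w_i$ by \eqref{ePu}), the first step is to rewrite, on each annulus $A_j$, the dominant part of $\rho V_0 e^U$ (if $j$ odd) or $\rho V_1 e^{-\tau U}$ (if $j$ even) and match it against the term $|x|^{\al_j-2}e^{w_j}$ coming from $\lap U$; the exponential of the $O(\rho^\eta)$ error in \eqref{waje}, together with the Taylor expansion $V_{\nu(j)}(\de_j y)=V_{\nu(j)}(0)+O(\de_j|y|)$ and $H(\de_j y,0)=h_0+O(\de_j|y|)$, will contribute a factor $1+O(\rho^{\eta'})$ for a suitable $\eta'>0$. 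The precise choice of the constants $d_i$ in \eqref{di}, $a_i$ in \eqref{aimp}--\eqref{aimi}, and $\e$ in \eqref{eps} is exactly what makes the leading coefficients cancel, so after this cancellation only lower-order terms survive.

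Next I would carry out the change of variables $x=\de_j y$ on each $A_j$ and estimate the $L^p$-norm there. By scaling, $\int_{A_j}|R|^p\,dx=\de_j^2\int_{A_j/\de_j}|R(\de_j y)|^p\,dy$, and on $A_j/\de_j$ (which invades $\R^2$ as $\rho\to0$) the leading density behaves like $\de_j^{-2}|y|^{\al_j-2}(1+|y|^{\al_j})^{-2}$, which is in $L^p$ for $p$ close to $1$ precisely because $\al_j>2$ (integrability at the origin requires $p(\al_j-2)>-2$, automatic; integrability at infinity requires $p(\al_j-2)+2p\alpha_j$... more simply, $|y|^{\al_j-2}(1+|y|^{\al_j})^{-2}\sim |y|^{-\al_j-2}$, so $p$ arbitrary works at infinity). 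Thus the scaling gains a positive power $\de_j^{2-2p}=d_j^{(2-2p)/\al_j}\rho^{\be_j(2-2p)/\al_j}$ — wait, this is $\rho$ to a \emph{negative} power times $(2-2p)<0$, i.e. a positive power of $\rho$ for $p>1$ — times the $O(\rho^{\eta'})$ from the error, plus the genuinely small contributions of all the cross-bubble terms $(\de_i/\de_j)^{\al_i/2}$ and $(\de_j/\de_i)^{\al_i/2}$ and the $\e$-terms from Lemma~\ref{ewfxi}, each of which is $O(\rho^{\eta})$ by the choice \eqref{eta1}--\eqref{eta2} of $\eta$. Summing the finitely many annuli $j=1,\dots,m$ and taking $p_0>1$ small enough and $\rho_0>0$ small enough yields \eqref{re} with $\eta_p=\min\{\eta',\eta,(2-2p)\min_j\be_j/\al_j+\text{(error exponents)}\}>0$.

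The main obstacle I expect is the \emph{matching on the overlaps}: near the boundaries $|x|\sim\sqrt{\de_{j}\de_{j+1}}$ between consecutive annuli, two bubbles of opposite sign and different scales are simultaneously of comparable size, and one must check that the chosen parameters $\be_i$, $d_i$ (through the recursions \eqref{bell1}, \eqref{belm1o2}, \eqref{ldi0}, \eqref{delmalo2}) make the mismatched leading terms cancel there too, not just in the interior of each $A_j$; this is where the identities \eqref{2saildi} and \eqref{repla1} do the heavy lifting. A secondary technical point is tracking the exponential of the $O(\rho^\eta)$ error: since that error multiplies a density that is \emph{not} uniformly integrable (it has an $L^1$ norm of order $1$, concentrated), one must be slightly careful and keep $p$ strictly above $1$ so that $\de_j^{2-2p}$ provides a genuine gain, and absorb the $e^{O(\rho^\eta)}=1+O(\rho^\eta)$ factor into the error rather than the main term. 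Once these are in place, the estimate \eqref{re} follows by routine bookkeeping of the exponents.
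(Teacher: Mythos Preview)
Your approach is essentially identical to the paper's: decompose into the annuli $A_j$, use the expansion \eqref{waje} to cancel the leading terms of $\Delta U$ against the nonlinearity, and then control the $L^p$-norm of the remaining terms via the scaling $x=\de_j y$ and the elementary integral bounds collected in the Appendix (Lemma~\ref{estint}). Two small clarifications: the factor $\de_j^{2-2p}$ is actually a \emph{negative} power of $\rho$ for $p>1$ (it is the $O(\rho^\eta)$ error that compensates it, which is precisely why $p$ must stay close to $1$), and your ``overlap'' worry is already absorbed by the fact that \eqref{waje} holds uniformly on the whole annulus $A_j$, boundaries included.
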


\begin{proof}[\dem]
First, note that  $\Delta U=\sum_{i=1}^m {(-1)^{i}\over \tau^{\nu(i)}} |x|^{\al_i-2} e^{w_i}$
so that, for any $1< j< m$ and $y\in \frac{A_j}{\de_j}$ we have that
\begin{equation}\label{ludjy}
\Delta U(\de_jy)= {(-1)^{j}\over \tau^{\nu(j)}} {2\al_j^2|y|^{\al_j-2}\over \de_j^2(1+|y|^{\al_j})^2} +O\bigg(\sum_{i=1}^{j-1} \Big({\de_i\over \de_j}\Big)^{\al_i} {1\over \de_j^2|y|^{\al_i+2}} + \sum_{i=j+1}^m \Big({\de_j\over \de_i}\Big)^{\al_i} {|y|^{\al_i-2}\over \de_j^2}\bigg).
\end{equation}
Similarly, we obtain that
\begin{equation}\label{lud1y}
\Delta U(\de_1y)= - {2\al_j^2|y|^{\al_j-2}\over \de_j^2(1+|y|^{\al_j})^2} +O\bigg( \sum_{i=2}^m \Big({\de_j\over \de_i}\Big)^{\al_i} {|y|^{\al_i-2}\over \de_j^2}\bigg),\qquad y\in{A_1\over \de_1}
\end{equation}
and
\begin{equation}\label{ludmy}
\Delta U(\de_my)= {(-1)^{m}\over \tau^{\nu(m)}} {2\al_j^2|y|^{\al_j-2}\over \de_j^2(1+|y|^{\al_j})^2} +O\bigg(\sum_{i=1}^{m-1} \Big({\de_i\over \de_j}\Big)^{\al_i} {1\over \de_j^2|y|^{\al_i+2}}  \bigg),\qquad y\in{A_m\over \de_m}
\end{equation}
By using \eqref{aimp} and \eqref{waje} (from Lemma \ref{expaU}) for $y\in \frac{A_j}{\de_j}$ and any $j$ odd we have that
\begin{equation}\label{veujo}
\begin{split}
\rho V_0(\de_j y) e^{U(\de_j y)} & = {2\al_j^2|y|^{\al_j-2}\over \de_j^2(1+|y|^{\al_j})^2} {V_0(\de_j y)\over V_0(0)} \exp\bigg((-1)^{m+1} {2\pi\over \tau^{\nu(m)}} (\al_m+2)[H(\de_j y,0) -h_0] + O(\rho^\eta) \bigg)\\
&= {2\al_j^2|y|^{\al_j-2}\over \de_j^2(1+|y|^{\al_j})^2} \lf[1+O(\de_j|y| + \rho^\eta)\rg]
\end{split}
\end{equation}
and
\begin{equation}\label{veujo2}
\begin{split}
&\rho V_1(\de_j y) e^{-\tau U(\de_j y)} = \rho V_1(\de_j y) \exp\bigg( -\tau\Big[-2\log\de_j -a_j-\log\rho + \log {|y|^{\al_j-2}\over \de_j^2(1+|y|^{\al_j})^2}\\
&\, +(-1)^{m+1} {2\pi\over \tau^{\nu(m)}} (\al_m+2) H(\de_j y,0) \Big] + O(\rho^\eta) \bigg)
= O\bigg(\rho^{1+\tau} \de_j^{2\tau} {(1+|y|^{\al_j})^{2\tau} \over |y|^{(\al_j-2)\tau }} \bigg),
\end{split}
\end{equation}
Similarly, by using \eqref{aimi} and $1-\nu(m)=\sigma(m)$ for $y\in \frac{A_j}{\de_j}$ and $j$ even
\begin{equation}\label{vetuje}
\begin{split}
\hspace{-0.2cm}\rho V_1(\de_j y) e^{-\tau U(\de_j y)} & = {2\al_j^2|y|^{\al_j-2}\over \de_j^2\tau (1+|y|^{\al_j})^2} {V_1(\de_j y)\over V_1(0)} \exp\bigg((-1)^{m} 2\pi \tau^{\sigma(m)} (\al_m+2)[H(\de_j y,0) - h_0] + O(\rho^\eta) \bigg)\\
&= {2\al_j^2|y|^{\al_j-2}\over \de_j^2\tau (1+|y|^{\al_j})^2} \lf[1+O(\de_j|y| + \rho^\eta)\rg]
\end{split}
\end{equation}
and
\begin{equation}\label{vetuje2}
\rho V_0(\de_j y) e^{U(\de_j y)} = O\bigg(\rho^{1+1/\tau} \de_j^{2/ \tau} {(1+|y|^{\al_j})^{2/\tau}\over |y|^{(\al_j-2)/\tau}} \bigg).
\end{equation}
Hence, by using \eqref{lud1y}, \eqref{veujo} and \eqref{veujo2} for $\de_1 y\in A_1$ we get that
\begin{equation}\label{estRd1}
\begin{split}
R(\de_1 y) 
=&\,  {1\over\de_1^2} {2\al_1^2|y|^{\al_1-2}\over (1+|y|^{\al_1})^2} O(\de_1|y| + \rho^\eta)  + O\bigg( \rho^{1+\tau} \de_1^{2\tau} {(1+|y|^{\al_1})^{2\tau} \over |y|^{(\al_1-2)\tau }}+ \sum_{i=2}^m \Big({\de_1\over \de_i}\Big)^{\al_i} {|y|^{\al_i-2}\over \de_1^2}\bigg),
\end{split}
\end{equation}
by using \eqref{ludjy},  \eqref{veujo} and \eqref{veujo2} for $1<j<m$, $j$ odd and $\de_jy\in A_j$
\begin{equation}
\begin{split}
R(\de_j y)=&\,  {1\over\de_j^2} {2\al_j^2|y|^{\al_j-2}\over (1+|y|^{\al_j})^2} O(\de_j|y| + \rho^\eta) \\
&\, + O\bigg( \rho^{1+\tau} \de_j^{2\tau} {(1+|y|^{\al_j})^{2\tau} \over |y|^{(\al_j-2)\tau }} + \sum_{i=1}^{j-1} \Big({\de_i\over \de_j}\Big)^{\al_i} {1\over \de_j^2|y|^{\al_i+2}}+ \sum_{i=j+1}^m \Big({\de_j\over \de_i}\Big)^{\al_i} {|y|^{\al_i-2}\over \de_j^2}\bigg),
\end{split}
\end{equation}
by using \eqref{ludjy}, \eqref{vetuje} and \eqref{vetuje2} for $1<j<m$, $j$ even and $\de_jy\in A_j$
\begin{equation}
\begin{split}
R(\de_j y)=&\,  {1\over\de_j^2\tau} {2\al_j^2|y|^{\al_j-2}\over (1+|y|^{\al_j})^2} O(\de_j|y| + \rho^\eta) \\
&\, + O\bigg( \rho^{1+1/\tau} \de_j^{2/\tau} {(1+|y|^{\al_j})^{2/\tau} \over |y|^{(\al_j-2)/\tau }} + \sum_{i=1}^{j-1} \Big({\de_i\over \de_j}\Big)^{\al_i} {1\over \de_j^2|y|^{\al_i+2}}+ \sum_{i=j+1}^m \Big({\de_j\over \de_i}\Big)^{\al_i} {|y|^{\al_i-2}\over \de_j^2}\bigg)
\end{split}
\end{equation}
and by \eqref{ludmy} for $\de_m y\in A_m$
\begin{equation}\label{estRdm}
\begin{split}
R(\de_m y)=&\,  {1\over\de_m^2\tau^{\nu(m)} } {2\al_m^2|y|^{\al_m-2}\over (1+|y|^{\al_m})^2} O(\de_m|y| + \rho^\eta) \\
&\, + O\bigg( \rho^{1+\tau^{(-1)^{m+1} } } \de_m^{2\tau^{(-1)^{m+1} } } {(1+|y|^{\al_m})^{2\tau^{(-1)^{m+1} } } \over |y|^{(\al_m-2)\tau^{(-1)^{m+1} } }} + \sum_{i=1}^{m-1} \Big({\de_i\over \de_m}\Big)^{\al_i} {1\over \de_m^2|y|^{\al_i+2}} \bigg)
\end{split}
\end{equation}
By \eqref{estRd1}-\eqref{estRdm}, we finally get that there exist $\rho_0>0$ small enough and $p_0>1$ close to 1, so that for all $0<\rho\le \rho_0$ and $1<p\le p_0$
\begin{equation*}
\begin{split}
\int_{\Om_\e}|R(x)|^p\,dx=&\,\sum_{j=1}^m \int_{A_j}|R(x)|^p\,dx = \sum_{j=1}^m \de_j^2\int_{A_j}|R(\de_j y )|^p\,dy = O\lf(\rho^{p \eta_p}\rg),
\end{split}
\end{equation*}
for some $ \eta_p>0 $, see Appendix, Lemma \ref{estint} for more details. This completes the proof.
\end{proof}


\subsection{The nonlinear problem and proof of Theorem \ref{main}.}\label{sec4}

\noindent In this subsection, we will look for a solution $u$ of \eqref{spsh} in the form
$u=U+\phi$, for some small remainder term $\phi$. In terms of
$\phi$, the problem \eqref{spsh} is equivalent to find $\phi\in
H_0^1(\Om_\e)$ so that
\begin{equation}\label{ephi}
\left\{ \begin{array}{ll} 
L(\phi)=-[R+\Lambda(\phi)+N(\phi)], &\text{ in $\Om_\e$},
\\
\phi=0,& \text{ on $\fr\Om_\e$}.
\end{array} \right.
\end{equation}
where the linear operators $L$ and $\Lambda$ are defined as
\begin{equation}\label{ol}
L(\phi) = \Delta \phi +  K(x) \phi,\quad K(x)=\sum_{i=1}^m |x|^{\al_i-2}e^{w_i}
\end{equation}
and
\begin{equation}\label{La}
\Lambda(\phi)=\rho [V_0(x)e^{U} + \tau V_1(x)e^{-\tau U}] \phi - K\phi
\end{equation}
The nonlinear part $N$ is given by
\begin{equation}\label{nlt}
\begin{split}
N(\phi)=&\,\rho V_0(x)e^{U}\big(e^{ \phi}- \phi - 1\big)-\rho  V_1(x) e^{-\tau U}\big( e^{ -\tau \phi}+ \tau  \phi  -1 \big).
\end{split}
\end{equation}

\begin{prop}\label{p2}
For any $p>1$ there exists $\rho_0>0$ so that for all $0<\rho\leq \rho_0$,
$h\in L^p(\Om_\e)$ there is a unique solution $\phi \in H_0^1(\Om_\e)$ of
\begin{equation}\label{plco}
\left\{ \begin{array}{ll}
L(\phi) = h&\text{in }\Om_\e\\
\phi =0 &\text{on }\fr\Om_\e.
\end{array} \right.
\end{equation}
Moreover, there is a constant $C>0$ independent of $\rho$ such that 
\begin{equation}\label{est}
\|\phi \| \le C |\log \rho| \|h\|_p.
\end{equation}
\end{prop}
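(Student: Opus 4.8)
The plan is to establish first the a priori estimate \eqref{est} for solutions of \eqref{plco}, and then obtain existence and uniqueness from the Fredholm alternative. For the a priori estimate I would argue by contradiction: suppose there are $\rho_n\to0^+$, functions $h_n\in L^p(\oen)$ and $\phi_n\in H^1_0(\oen)$ with $L(\phi_n)=h_n$, $\|\phi_n\|=1$ and $|\log\rho_n|\,\|h_n\|_p\to0$. Using the Green's representation of $\phi_n$ on the pierced domain $\oen$, together with the fact that in each shrinking annulus the potential $K$ is a bounded perturbation of $W_j(y):=\dfrac{2\al_j^2|y|^{\al_j-2}}{(1+|y|^{\al_j})^2}$, one shows (itself by a nested blow-up argument) that the rescaled functions $\phi_{n,j}(y):=\phi_n(\de_{j,n}y)$ are uniformly bounded in $L^\infty_{loc}(\R^2)$ for each $j=1,\dots,m$. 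Passing to a subsequence, $\phi_{n,j}\to\psi_j$ in $C^0_{loc}(\R^2)$, where $\psi_j$ is a bounded solution of $\lap\psi_j+W_j\psi_j=0$ in $\R^2$ (the right-hand side drops out since $\|h_n\|_p\to0$ and by scaling). Because $\al_j\notin2\N$ for every $j$ by \eqref{ali0}, nondegeneracy of the entire singular Liouville solution forces $\psi_j=c_j Z_j$, with $Z_j(y)=\dfrac{1-|y|^{\al_j}}{1+|y|^{\al_j}}$ and $c_j\in\R$.

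The core of the argument, and the delicate point alluded to in the introduction, is to show that $c_j=0$ for all $j$ by an outside–in cascade. Near $\fr\Om$ and at the $O(1)$ scale the potential $K$ tends to zero, so $\phi_n$ is asymptotically harmonic there and vanishes on $\fr\Om$; comparing with the far-field behaviour $Z_m(y)\to-1$ of the outermost bubble and using the maximum principle on $\Om\sm\{0\}$ gives $c_m=0$. Then, in each neck region $\{R\de_{j-1,n}<|x|<R^{-1}\de_{j,n}\}$ between two consecutive concentration scales, $\phi_n$ is asymptotically a bounded harmonic function, hence, letting $n\to\infty$ and then $R\to\infty$, a constant, which yields the matching relation $c_j=-c_{j-1}$; combined with $c_m=0$ this propagates to $c_j=0$ for all $j$. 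In the innermost annulus $\{\e_n<|x|<\de_{1,n}\}$ the Dirichlet condition on $\fr B(0,\e_n)$ pins $\phi_n$ down, up to lower order, to $c_1\,\log(|x|/\e_n)/\log(\de_{1,n}/\e_n)$, whose gradient is small precisely by the factor $1/\log(\de_{1,n}/\e_n)\asymp1/|\log\rho_n|$; this is exactly the mechanism producing the $|\log\rho|$ loss in \eqref{est}, and it is where the pierced-domain geometry forces an analysis more involved than in \cite{GP,pr2,EFP,AP}. Once every $c_j=0$, testing $L(\phi_n)=h_n$ against $\phi_n$ and estimating $\int_{\oen}K\phi_n^2$ region by region — using the $C^0_{loc}$ convergence of $\phi_{n,j}$ to $0$ on each rescaled annulus and the decay of $K$ in the necks — together with $\big|\int_{\oen}h_n\phi_n\big|\le\|h_n\|_p\|\phi_n\|_{p'}\to0$, gives $\|\phi_n\|\to0$, contradicting $\|\phi_n\|=1$. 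Hence there are $\rho_0>0$ and $C>0$ such that $\|\phi\|\le C|\log\rho|\,\|h\|_p$ for every $0<\rho\le\rho_0$ and every solution of \eqref{plco}.

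For existence and uniqueness I would rewrite \eqref{plco} as $(\mathrm{Id}-T)\phi=-(-\lap)^{-1}h$, where $T\phi:=(-\lap)^{-1}(K\phi)$ and $(-\lap)^{-1}$ is the solution operator of $-\lap$ with zero boundary data on $\oen$. Since $K\in L^\infty(\oen)$ (for fixed $\rho$) and $(-\lap)^{-1}\colon L^2(\oen)\to H^1_0(\oen)$ is compact, $T$ is compact on $H^1_0(\oen)$; and $(-\lap)^{-1}h\in H^1_0(\oen)$ because $p>1$. The a priori estimate applied with $h=0$ shows $\mathrm{Id}-T$ is injective for $\rho\le\rho_0$, hence bijective by the Fredholm alternative; applying the estimate to the unique solution gives \eqref{est}, with $C$ independent of $\rho$.

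The step I expect to be the main obstacle is the vanishing of the constants $c_j$: establishing the uniform local boundedness of the rescaled $\phi_n$ on the pierced domain and, above all, carrying out the neck-by-neck matching that simultaneously kills all the $c_j$ and accounts for the sharp $|\log\rho|$ factor. The remaining ingredients — the nondegeneracy of the singular Liouville linearization, the energy estimate yielding strong $H^1_0$ convergence, and the compactness/Fredholm argument — are, by comparison, routine.
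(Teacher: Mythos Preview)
Your contradiction--plus--Fredholm framework matches the paper's, but the mechanism you propose for annihilating the limit coefficients is quite different and, as sketched, has a real gap. The paper never establishes $L^\infty$ control on $\phi_n$. It works instead in the weighted spaces $H_{\al_j}(\R^2),L_{\al_j}(\R^2)$ of \eqref{lai}--\eqref{hai}: testing the equation against $\phi_n$ gives $\sum_j 2\al_j^2\|\Phi_{j,n}\|_{L_{\al_j}}^2=1+o(1)$ with $\Phi_{j,n}=\phi_n(\de_j\cdot)$, so $\{\Phi_{j,n}\}$ is bounded in $H_{\al_j}$ and, by the compact embedding $H_{\al_j}\hookrightarrow L_{\al_j}$, converges to $a_jY_{0j}$ strongly in $L_{\al_j}$. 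To show all $a_j=0$ the paper tests against two families: $\gamma_0^{-1}P_\e Z_{0j}$ (with $\gamma_0^{-1}\sim|\log\rho|$, see \eqref{gama0}) and $P_\e w_j$. The first extracts not only relations among the $a_j$ but also the auxiliary limits $\mu_j=\lim_n\log\rho_n\int\Pi_j\Phi_{j,n}$, shown to satisfy $\mu_j+\mu_{j+1}=0$; the second produces further relations. One ends up with an explicit $(m{+}1)\times(m{+}1)$ linear system in $(x_1,\dots,x_m,\mu_1)$, $x_j=\sum_{i\ge j}a_i$, whose matrix is reduced by row operations to a form with a nonvanishing $2\times2$ corner determinant, forcing $a_j=\mu_1=0$. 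The contradiction then comes from the energy identity above. The quantities $\mu_j$ are precisely the logarithmic drift across the necks; they constitute an extra unknown, not a consequence of the $a_j$'s, and must be killed simultaneously with them.

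Your neck-matching step --- ``bounded harmonic in the neck, hence constant, hence $c_j=-c_{j-1}$'' --- does not control this drift. First, the $L^\infty_{\rm loc}$ bound you invoke via ``Green's representation'' is not justified: in two dimensions $H^1\not\hookrightarrow L^\infty$, the potential $K$ is of size $\de_j^{-2}$ on $A_j$, and the Green kernel of the pierced domain itself depends on $\e=\e(\rho)$; the paper sidesteps all of this by staying in $H_{\al_j}$. Second, even granting a uniform $L^\infty$ bound, the convergences $\phi_{n,j}\to c_jZ_j$ are only on compacta of each rescaled picture, so they say nothing at the inner/outer edges where the neck lives; a bounded harmonic function on the long annulus $\{R\de_{j-1}<|x|<R^{-1}\de_j\}$ can carry a term $b\log|x|$ with $b=O\big(1/\log(\de_j/\de_{j-1})\big)$, so the putative constant limit is exactly what is in question. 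You recognise this logarithmic mode in the innermost collar $\{\e_n<|x|<\de_{1,n}\}$, but the same phenomenon is present in \emph{every} neck, and your outside--in cascade $c_m=0\Rightarrow c_{m-1}=0\Rightarrow\cdots$ ignores it. In the paper this mode is made visible precisely by the factor $\gamma_0^{-1}\sim|\log\rho|$ in front of $P_\e Z_{0j}$, and the resulting $\mu_1$ is the $(m{+}1)$st unknown that has to be eliminated by the determinant computation; without an analogue of this step your argument does not close.
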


The latter proposition implies that the unique solution $\phi=T(h)$ of \eqref{plco} defines a continuous linear map from $L^p(\Om_\e)$ into $H_0^1(\Om_\e)$, with norm bounded by $C|\log\rho|$. We are now in position to study the nonlinear problem. 

\begin{prop}\label{p3}
There exist $p_0>1$ and $\rho_0>0$ so that for any $1<p<p_0$ and 
all $0<\rho\leq \rho_0$, the problem
\begin{equation}\label{pnlabis}
\left\{ \begin{array}{ll}
L(\phi)= -[R + \Lambda(\phi)+ N(\phi)], & \text{in } \Om_\e\\
\phi=0, &\text{on }\fr\Om_\e
\end{array} \right.
\end{equation}
admits a unique solution $\phi(\rho) \in H_0^{1}(\Om_\e)$, where $N$, $\Lambda$ and $R$ are given by \eqref{nlt}, \eqref{La} and \eqref{R}, respectively. Moreover, there is a constant $C>0$ such that for some $\eta_p>0$
$$\|\phi\|_\infty\le C\rho^{\eta_p} |\log \rho |$$
\end{prop}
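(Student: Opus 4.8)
The plan is to solve \eqref{pnlabis} by a contraction-mapping argument whose two ingredients are the linear theory of Proposition \ref{p2} and the error bound of Lemma \ref{estrr0}. Let $T\colon L^p(\Om_\e)\to H_0^1(\Om_\e)$ be the linear solution operator of \eqref{plco} given by Proposition \ref{p2}, so that $\|T(h)\|\le C|\log\rho|\,\|h\|_p$; since $L$ is linear, \eqref{pnlabis} is equivalent to
\begin{equation*}
\phi=\ml A(\phi):=-T\big(R+\Lambda(\phi)+N(\phi)\big).
\end{equation*}
I will show that $\ml A$ maps the closed ball $\ml B_r=\{\phi\in H_0^1(\Om_\e):\|\phi\|\le r\}$ into itself and is a contraction there, with $r=C_0\,\rho^{\eta_p}|\log\rho|$ for a suitable $C_0>0$ and all small $\rho$; the fixed point $\phi=\phi(\rho)$ then solves \eqref{pnlabis}, and a bootstrap will give the claimed $L^\infty$ bound.

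The heart of the matter is three estimates, valid for $\phi,\phi_1,\phi_2\in\ml B_r$ and $p>1$ close to $1$: (i) $\|R\|_p\le C\rho^{\eta_p}$, which is Lemma \ref{estrr0}; (ii) $\|\Lambda(\phi)\|_p\le C\rho^{\theta}\|\phi\|$ for some $\theta>0$; (iii) $\|N(\phi)\|_p\le C\rho^{-\theta'}\|\phi\|^{2}$, where $\theta'=\theta'(p)>0$ can be made as small as we wish by taking $p$ closer to $1$; together with their Lipschitz counterparts $\|\Lambda(\phi_1)-\Lambda(\phi_2)\|_p\le C\rho^{\theta}\|\phi_1-\phi_2\|$ and $\|N(\phi_1)-N(\phi_2)\|_p\le C\rho^{-\theta'}(\|\phi_1\|+\|\phi_2\|)\|\phi_1-\phi_2\|$. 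For (ii), the expansions \eqref{veujo}--\eqref{vetuje2} show that on each annulus $A_j$ the coefficient $\rho[V_0e^{U}+\tau V_1e^{-\tau U}]$ agrees with the dominant piece of $K$ up to a relative error $O(\de_j|y|+\rho^{\eta})$, so that $\Lambda(\phi)$ is, modulo genuinely smaller terms, a factor $O(\rho^{\eta})$ times a weight $W$ comparable to $K$; one then bounds $\|W\phi\|_p\le\|W\|_q\|\phi\|_s$ with $\tfrac1p=\tfrac1q+\tfrac1s$, $q>1$ close to $1$, using $\|K\|_{L^q(A_j)}=O(\de_j^{(2-2q)/q})=O(\rho^{-\theta_q})$ with $\theta_q\to0$ as $q\to1$ and the Sobolev bound $\|\phi\|_s\le C(s)\|\phi\|$, so that the mild divergence $\rho^{-\theta_q}$ is absorbed by $\rho^{\eta}$. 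For (iii) one uses $|e^{t}-1-t|\le\tfrac12 t^{2}e^{|t|}$ and $|e^{-\tau t}+\tau t-1|\le\tfrac12\tau^{2}t^{2}e^{\tau|t|}$, then Hölder with three exponents together with the Sobolev bounds $\|\phi\|_{q'}\le C(q')\|\phi\|$ ($q'<\infty$) and the Moser--Trudinger inequality on $\Om_\e$ — whose constant is $\e$-independent by zero extension, and which applies since $\|\phi\|\le r\to0$ — to control the exponential factor; the $K$-like weight is handled as in (ii), producing the loss $\rho^{-\theta'}$, overcome by the quadratic gain $\|\phi\|^{2}=O(\rho^{2\eta_p}|\log\rho|^{2})$ as soon as $\theta'<\eta_p$. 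The Lipschitz estimates follow from the same computations applied to differences, via $|e^{s}-e^{t}|\le(e^{|s|}+e^{|t|})|s-t|$.

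Combining (i)--(iii) with $\|T(h)\|\le C|\log\rho|\|h\|_p$ gives $\|\ml A(\phi)\|\le C|\log\rho|\big(\rho^{\eta_p}+\rho^{\theta}r+\rho^{-\theta'}r^{2}\big)$, which is $\le r$ once $C_0$ is large and $\rho$ small, because $|\log\rho|\rho^{\theta}\to0$ and $|\log\rho|^{3}\rho^{\eta_p-\theta'}\to0$; the analogous computation on differences shows $\ml A$ is a contraction of $\ml B_r$, so Banach's theorem yields the unique $\phi=\phi(\rho)\in\ml B_r$. For the $L^{\infty}$ bound one uses $\Delta\phi=-[K\phi+R+\Lambda(\phi)+N(\phi)]$ with $\phi=0$ on $\fr\Om_\e$, hence $\phi=-(-\Delta_{\Om_\e})^{-1}\big(K\phi+R+\Lambda(\phi)+N(\phi)\big)$; since $G_{\Om_\e}\le G_{\Om}$ by domain monotonicity and $\|G_{\Om}(x,\cdot)\|_{L^{p'}}\le C$ uniformly in $x$ for every $p'<\infty$, one obtains $\|\phi\|_\infty\le C\big(\|K\phi\|_p+\|R\|_p+\|\Lambda(\phi)\|_p+\|N(\phi)\|_p\big)\le C\rho^{\eta_p}|\log\rho|$ after estimating $\|K\phi\|_p$ by the same weight/Sobolev trick (and, if needed, shrinking $\eta_p$ a little). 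The main obstacle is the pair (ii)--(iii): because $\rho e^{U}$ concentrates like a Dirac mass, neither $\|\rho e^{U}\|_{\infty}$ (of order $\de_j^{-2}$) nor $\|\rho e^{U}\|_{q}$ for a fixed $q>1$ (of order $\de_j^{2/q-2}$) stays bounded, so one must send $p,q\to1^{+}$ simultaneously, invoke an $\e$-uniform Moser--Trudinger inequality on the pierced domain, and carefully balance the residual divergence of $\|K\|_q$ against the genuine smallness coming from $\rho^{\eta}$ (for $\Lambda$) and from $\|\phi\|^{2}$ (for $N$) — all while the $|\log\rho|$ loss in Proposition \ref{p2} is kept harmless.
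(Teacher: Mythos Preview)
Your approach is correct and coincides with the paper's: the same contraction-mapping argument in a ball of radius $\sim\rho^{\eta_p}|\log\rho|$ via the operator $\ml A(\phi)=-T(R+\Lambda(\phi)+N(\phi))$, with your estimates (ii) and (iii) being precisely the content of the two lemmas the paper proves just before the proposition (your quadratic bound $\|N(\phi)\|_p\le C\rho^{-\theta'}\|\phi\|^2$ becomes the paper's linear bound $\|N(\phi)\|_p\le C\rho^{\eta_p''}\|\phi\|$ once one factor of $\|\phi\|\le M\rho^{\eta_p}|\log\rho|$ is absorbed, and your explicit use of Moser--Trudinger is cleaner than the paper's somewhat elliptical ``$|e^a-1|\le C|a|$ on compact sets''). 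The paper's proof does not address the $L^\infty$ bound at all; your Green-function bootstrap via $G_{\Om_\e}\le G_{\Om}$ supplies what the paper leaves implicit.
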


Here, $\eta_p$ is the same as in \eqref{re}. We shall use the following estimates.

\begin{lem}
There exist $p_0>1$ and $\rho_0>0$ so that for any $1<p<p_0$ and 
all $0<\rho\leq \rho_0$ it holds
\begin{equation}\label{estlaphi}
\| \Lambda (\phi) \|_p  \le C\rho^{\eta_p'} \|\phi\|,
\end{equation}
for all $\phi\in H_0^1(\Om_\e)$ with $\|\phi\|\le M \rho^{\eta_p}|\log\rho|$, for some $\eta_p'>0$.
\end{lem}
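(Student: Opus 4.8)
The plan is to estimate $\Lambda(\phi)$ in $L^p$ by exploiting that the coefficient $\rho[V_0 e^{U} + \tau V_1 e^{-\tau U}] - K$ is small: on each annulus $A_j$, exactly one of the two exponential terms $\rho V_0 e^{U}$, $\rho\tau V_1 e^{-\tau U}$ matches the dominant bubble term $|x|^{\al_j-2}e^{w_j}$ in $K$ up to a factor $1+O(\de_j|y|+\rho^\eta)$ (see \eqref{veujo}, \eqref{vetuje}), while the other exponential term and the remaining summands in $K$ are of lower order. First I would write, for $\phi\in H_0^1(\Om_\e)$ and $p>1$,
\[
\|\Lambda(\phi)\|_p \le \|\rho V_0 e^{U} + \tau\rho V_1 e^{-\tau U} - K\|_q \,\|\phi\|_s,
\]
for a suitable Hölder triple $\tfrac1p=\tfrac1q+\tfrac1s$, and then bound $\|\phi\|_s$ by $C\|\phi\|$ using the Sobolev embedding $H_0^1(\Om_\e)\hookrightarrow L^s(\Om_\e)$ (valid for every $s<\infty$ in dimension two, with the embedding constant independent of $\e$ since $\Om_\e\subset\Om$). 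It then remains to show $\|\rho V_0 e^{U} + \tau\rho V_1 e^{-\tau U} - K\|_q \le C\rho^{\eta_p'}$ for some $q>1$ close to $1$ and some $\eta_p'>0$.

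The heart of the matter is this last weighted estimate, and I would carry it out annulus by annulus exactly as in the proof of Lemma \ref{estrr0}. On $A_j$ with $j$ odd, change variables $x=\de_j y$; then $\de_j^2\big[\rho V_0(\de_j y)e^{U(\de_j y)} - |\de_j y|^{\al_j-2}e^{w_j(\de_j y)}\big] = \tfrac{2\al_j^2|y|^{\al_j-2}}{(1+|y|^{\al_j})^2}O(\de_j|y|+\rho^\eta)$ by \eqref{veujo}, the other exponential $\rho\tau V_1 e^{-\tau U}$ contributes the small term estimated in \eqref{veujo2}, and the off-diagonal pieces $|x|^{\al_i-2}e^{w_i}$, $i\ne j$, of $K$ are controlled by the same ratios $(\de_i/\de_j)^{\al_i}$ and $(\de_j/\de_i)^{\al_i}$ that already appeared in \eqref{ludjy}. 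An analogous computation handles $j$ even via \eqref{vetuje}, \eqref{vetuje2}, and the endpoints $j=1,m$ as in \eqref{estRd1}, \eqref{estRdm}. Integrating the $q$-th power over $A_j$ and summing over $j$, using $\int_{A_j}(\cdots) = \de_j^2\int_{A_j/\de_j}(\cdots)$ and the integrability of $|y|^{(\al_j-2)q}(1+|y|^{\al_j})^{-2q}$ for $q$ close to $1$ (since $\al_j>2$, as recorded after \eqref{ali01}), one gets $\|\rho V_0 e^{U}+\tau\rho V_1 e^{-\tau U}-K\|_q = O(\rho^{\eta_p'})$ with $\eta_p'$ a positive exponent depending on $\eta$, $p$ and the gaps $\be_i/\al_i - \be_j/\al_j$; this is completely parallel to Lemma \ref{estint} in the Appendix.

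I expect the main obstacle to be purely bookkeeping: tracking which power of $\rho$ survives after the Jacobian factor $\de_j^2$ is distributed and after Hölder splits the exponent, so that the net exponent $\eta_p'$ is genuinely positive and uniform over the finitely many annuli. The hypothesis $\|\phi\|\le M\rho^{\eta_p}|\log\rho|$ is not actually needed for the linear estimate \eqref{estlaphi} itself — $\Lambda$ is linear in $\phi$ — but it is stated because that is the ball on which the fixed-point argument of Proposition \ref{p3} operates; I would simply remark that \eqref{estlaphi} holds for all $\phi\in H_0^1(\Om_\e)$ and a fortiori on that ball. No delicate point beyond the annular integral estimates arises, since the smallness of the coefficient is structural, coming directly from the careful choice of $\de_j$'s, $\al_i$'s and $\e$ made in Section \ref{sec2}.
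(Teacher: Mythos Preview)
Your proposal is correct and follows essentially the same approach as the paper: write $\Lambda(\phi)=(W-K)\phi$ with $W=\rho[V_0e^U+\tau V_1e^{-\tau U}]$, apply H\"older to split off $\|\phi\|_s\le C\|\phi\|$, and estimate $\|W-K\|_q$ annulus by annulus via the expansions \eqref{veujo}--\eqref{vetuje2} and the integral bounds of Lemma~\ref{estint}. Your remark that the smallness hypothesis on $\|\phi\|$ is not actually used in this linear estimate is also correct; the paper's proof likewise never invokes it.
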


\begin{proof}[\dem]
Arguing in the same way as in \cite[Lemma 3.3]{EFP}, for simplicity, denote $W=  \rho[V_0 e^{ U} + \tau V_1 e^{-\tau U }] $, so that the linear operator $\Lambda$ is re-written as $\Lambda(\phi)= (W-K)\phi$. By using \eqref{veujo}-\eqref{veujo2} and \eqref{vetuje}-\eqref{vetuje2}, we find that for $i$ odd
$$\de_i^2W(\de_i y)={2\al_i^2|y|^{\al_i-2}\over (1+|y|^\al_i)^2} \lf[1+O(|\de_i y|+\rho^{\eta})\rg] + O\Big(\rho^{1+\tau} \de_i^{2+2\tau}{(1+|y|^{\al_i})^{2\tau}\over |y|^{(\al_i-2)\tau}}\Big)$$
uniformly for $\de_iy\in A_i$ and for $i$ even
$$\de_i^2W(\de_i y)={2\al_i^2|y|^{\al_i-2}\over (1+|y|^\al_i)^2} \lf[1+O(|\de_i y|+\rho^{\eta})\rg] + O\Big(\rho^{1+1/\tau} \de_i^{2+2/\tau}{(1+|y|^{\al_i})^{2/\tau}\over |y|^{(\al_i-2)/\tau}}\Big)$$
uniformly for $x\in A_i$ and $i$ even. Hence, for any $q\ge 1$ and for $i$ odd there holds
\begin{equation*}
\begin{split}
\lf\|W-K\rg\|_{L^q(A_i)}^q\le&\, C\bigg[\de_i^{2-q} \int_{ A_i\over \de_i } \bigg| {2\al_i^2 |y |^{\al_i-1} \over (1+|y|^{\al_i} )^2 }\bigg|^q\, dy  +\rho^{\eta q }\de_i^{2-2q}  \int_{ A_i\over \de_i } \bigg| {2\al_i^2 |y |^{\al_i-2} \over (1+|y|^{\al_i} )^2 }\bigg|^q\, dy  \\
&\,+\rho^{(1+\tau)q} \de_i^{2+2\tau q}\int_{A_i\over \de_i} \bigg|{(1+|y|^{\al_i})^{2\tau }\over |y|^{(\al_i-2)\tau }} \bigg|^q dy + \sum_{j<i} \de_i^{2-2q}\Big({\de_j\over \de_i }\Big)^{\al_j q} \int_{A_i\over \de_i} \bigg| {1\over |y|^{\al_j+2}}\bigg|^q dy\\
&+ \sum_{j>i} \de_i^{2-2q}\Big({\de_i\over \de_j }\Big)^{\al_j q} \int_{A_i\over \de_i} |y|^{(\al_j-2)q}  dy \bigg] \le C \rho^{ q \eta'_{1,q} } 
\end{split}
\end{equation*}
for some $\eta'_{1,q}>0$, see Lemma \ref{estint}. Similarly, there holds $\lf\|W-K\rg\|_{L^q(A_i)}^q\le C \rho^{ q \eta'_{2,q} } $ for any $q\ge 1$ and for $i$ even 
for some $\eta'_{2,q}>0$. Hence, we get that
\begin{equation*}
\begin{split}
\| \Lambda (\phi) \|_p  \le &\, \lf\|\lf(W-K\rg)\phi\rg\|_p \le   \lf\| W-K\rg\|_{pr } \|\phi \|_{ps } \le C \rho^{\eta'_{p } }  \|\phi \|,
\end{split}
\end{equation*}
where $ \eta'_p=\min\lf\{\eta'_{1,pr }, \eta'_{2,ps} \rg\}$ with $ r $, $s$ satisfying $\frac{1}{ r }+\frac{1}{ s }=1$. Furthermore, we have used the H\" older's inequality $\|uv\|_q\le \|u\|_{qr}\|v\|_{qs}$ with ${1\over r }+{1\over s }=1$ and the inclusions $L^{p}(\Om_\e)\hookrightarrow L^{pr}(\Om_\e)$  for any $r>1$ and $H^{1}_0(\Om_\e)\hookrightarrow L^{q}(\Om_\e)$ for any $q>1$. Let us stress that we can choose $p$, $r$ and $s$ close enough to 1 such that $\eta_p'>0$.
\end{proof}

\medskip

\begin{lem}
There exist $p_0>1$ and $\rho_0>0$ so that for any $1<p<p_0$ and 
all $0<\rho\leq \rho_0$ it holds
\begin{equation}\label{estnphi}
\|  N (\phi_1)- N(\phi_2) \|_p  \le C\rho^{\eta_p''} \|\phi_1-\phi_2\|
\end{equation}
for all $\phi_i\in H_0^1(\Om_\e)$ with $\|\phi_i\|\le M \rho^{\eta_p}|\log\rho|$, $i=1,2$, and for some $\eta_p''>0$. In particular, we have that
\begin{equation}\label{estnphi1}
\| N (\phi) \|_p  \le C\rho^{\eta_p''} \ \|\phi\|
\end{equation}
for all $\phi\in H_0^1(\Om_\e)$ with $\|\phi\|\le M \rho^{\eta_p}|\log\rho|$.
\end{lem}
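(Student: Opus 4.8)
The plan is to treat $N$ in parallel with the operator $\Lambda$ above (and in the spirit of \cite{EFP,GP,pr2}), the essential new feature being that in dimension two one cannot control $\|\phi\|_\infty$ by $\|\phi\|$, so the exponential factors built into $N$ must be absorbed through the Moser--Trudinger inequality rather than by an $L^\infty$ bound. First I would write $N(\phi)=\rho V_0 e^{U}f(\phi)-\rho V_1 e^{-\tau U}g(\phi)$ with $f(t)=e^{t}-t-1$ and $g(t)=e^{-\tau t}+\tau t-1$, and record the elementary facts $f(0)=g(0)=0$, $f'(t)=e^{t}-1$, $g'(t)=\tau(1-e^{-\tau t})$, together with $|e^{s}-1|\le |s|e^{|s|}$ and $|1-e^{-\tau s}|\le \tau|s|e^{\tau|s|}$. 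Applying the mean value theorem to $f$ and $g$ along the segment joining $\phi_2(x)$ to $\phi_1(x)$ then yields the pointwise bound
\begin{equation*}
|N(\phi_1)-N(\phi_2)|\le C\big(\rho V_0 e^{U}+\rho\tau V_1 e^{-\tau U}\big)\,\big(|\phi_1|+|\phi_2|\big)\,e^{c_0(|\phi_1|+|\phi_2|)}\,|\phi_1-\phi_2|
\end{equation*}
on $\Om_\e$, where $c_0=c_0(\tau)>0$.

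Next I would estimate the $L^p$ norm of the right-hand side by the generalized H\"older inequality with four exponents, $\frac1q+\frac1r+\frac1s+\frac1t=\frac1p$, distributing the four factors as follows. The densities $\rho V_0 e^{U}$ and $\rho\tau V_1 e^{-\tau U}$ are handled annulus by annulus: by the expansions \eqref{veujo}, \eqref{veujo2}, \eqref{vetuje}, \eqref{vetuje2} they are, on each $A_i$ and up to a factor $1+o(1)$, standard singular Liouville bubbles, and the appendix (Lemma \ref{estint}) gives $\|\rho V_0 e^{U}\|_{L^{q}(A_i)}+\|\rho\tau V_1 e^{-\tau U}\|_{L^{q}(A_i)}\le C\rho^{-\kappa_q}$ with $\kappa_q\ge0$ and $\kappa_q\to0$ as $q\to1^{+}$ (at $q=1$ the norm is the bounded mass $\sim 4\pi$). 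The factor $|\phi_1|+|\phi_2|$ is controlled in $L^r$ by $\|\phi_1\|+\|\phi_2\|$, and $|\phi_1-\phi_2|$ in $L^t$ by $\|\phi_1-\phi_2\|$, via the Sobolev embedding $H^1_0(\Om_\e)\hookrightarrow L^{q}(\Om_\e)$ for every finite $q$ (with constant uniform in $\e$, since $\Om_\e\subset\Om$). Finally the exponential factor is bounded in $L^s$ by a constant, uniformly in $\e$, by the Moser--Trudinger inequality, because $\|\phi_i\|\le M\rho^{\eta_p}|\log\rho|\to0$.

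Collecting these bounds and summing over $i=1,\dots,m$ gives $\|N(\phi_1)-N(\phi_2)\|_p\le C\rho^{-\kappa_q}(\|\phi_1\|+\|\phi_2\|)\|\phi_1-\phi_2\|\le C\rho^{\eta_p-\kappa_q}|\log\rho|\,\|\phi_1-\phi_2\|$. Since $\eta_p$ is bounded below by a positive constant for $p$ in a right-neighbourhood of $1$ (as is clear from the proof of Lemma \ref{estrr0}) while $\kappa_q\to0$ as $q\to1^{+}$, one first shrinks $p_0$, then picks $q$ slightly larger than $p$ and $r,s,t$ large but finite, so that $\eta_p-\kappa_q>0$; the logarithmic factor is then absorbed by slightly lowering the exponent, which gives \eqref{estnphi} with some $\eta_p''>0$. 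Estimate \eqref{estnphi1} follows at once by taking $\phi_2=0$ and using $N(0)=0$.

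The step I expect to be the main obstacle is precisely the balancing of the four H\"older exponents: one must choose $q>1$ close enough to $1$ that the mild negative power $\rho^{-\kappa_q}$ coming from the bubble norms is beaten by the smallness $\rho^{\eta_p}$ of $\|\phi_i\|$, while simultaneously keeping $r,s,t$ finite for the Sobolev embeddings and $s$ in the range where the Moser--Trudinger bound applies — and all of this with constants uniform over the $m$ shrinking annuli $A_1,\dots,A_m$, whose widths degenerate as $\rho\to0$. This is the point at which, as anticipated in the introduction, the analysis is heavier than in the single-bubble constructions of \cite{AP,EFP,F}.
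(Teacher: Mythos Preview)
Your proposal is correct and follows essentially the same route as the paper's proof: mean value theorem to exhibit $N(\phi_1)-N(\phi_2)$ as a product of the bubble density $\rho V_i e^{(-\tau)^i U}$, an intermediate $\phi$-factor, and $\phi_1-\phi_2$; then H\"older, the $L^q$-estimates \eqref{v0euq}--\eqref{v1etuq} on the density (your $\kappa_q$ is exactly the paper's $-\eta_{0,q}=\frac{\beta_1(2q-2)}{\alpha_1 q}$), and Sobolev embedding for the $\phi$'s. The only cosmetic difference is that the paper packages the perturbation as $V_i e^{(-\tau)^i(U+\tilde\phi)}$ (a three-factor H\"older split) and controls the extra $e^{(-\tau)^i\tilde\phi}$ inside the density estimate \eqref{ev}, whereas you peel off $e^{c_0(|\phi_1|+|\phi_2|)}$ as a fourth H\"older factor and bound it directly by Moser--Trudinger; both lead to the same balance $\eta_p+\eta_{0,pr}>0$ for $p,r$ close to $1$.
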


\begin{proof}[\dem] We will argue in the same way as in \cite[Lemma 5.1]{AP}, see also \cite[Lemma 3.4]{EFP}. First, denoting $f_i(\phi)= V_i(x) e^{(-\tau)^{i}(U+\phi)}$ we point out that
\begin{equation}\label{mvtn}
N(\phi)=\sum_{i=0}^1\rho \lf\{f_i(\phi)-f_i(0)-f'_i(0)[\phi] \rg\}\ \ \ \text{and} \ \ \
N(\phi_1)-N(\phi_2) =\sum_{i=0}^1\rho  f_i''(\ti \phi_{\mu_i}) [\phi_{\theta_i}, \phi_1-\phi_2],
\end{equation}
by the mean value theorem, where $\phi_{\theta_i}=\theta_i\phi_1+(1-\theta_i)\phi_2$, $\ti\phi_{\mu_i}=\mu_i\phi_{\theta_i}$ for some $\theta_i,\mu_i\in[0,1]$, $i=0,1$, and \linebreak $f_i''(\phi)[\psi,v]=\tau^{2i} V_i(x)e^{ (-\tau)^{i}(U+\phi) } \psi v$. Using H\"older's inequalities we get that
\begin{equation}\label{hin}
\lf\| f_i''(\phi)[\psi,v]\rg\|_p\le  |\tau|^{2i} \| V_ie^{ (-\tau)^{i}(U+\phi)} \|_{pr_i}  \|\psi\|_{ps_i} \|v\|_{pt_i}
\end{equation}
with $ {1\over r_i} +{1\over s_i} + {1\over t_i}=1$. We have used the H\" older's inequality and $ \|uvw\|_q\le \|u\|_{qr}\|v\|_{qs}\|w\|_{qt}$ with $ {1\over r }+{1\over s }+{1\over t}=1$. Now, let us estimate $  \| V_ie^{ (-\tau)^{i}(U+\phi)} \|_{pr_i} $ with $\phi=\ti\phi_{\mu_i}$, $i=0,1$. By \eqref{veujo} and the change of variable $x=\delta_i y$ let us estimate
\begin{equation}\label{1036}
\int_{A_i} \lf| V_0 e^{ U} \rg|^q dx
= O\lf( \rho^{-q} \delta_i^{2- 2q}  \int_{\frac{A_i}{\delta_i} } \lf| \frac{|y|^{\alpha_i-2}}{(1+|y|^{\alpha_i})^2 } \rg|^q \lf[1+  O\big( \rho^{\eta} +\delta_i |y| \big)\rg]^q \rg) = O\lf( \rho^{{\be_i\over \al_i}(2-2q)-q} \rg) 
\end{equation}
for any $i$ odd and similarly, by \eqref{vetuje} we get that
\begin{eqnarray} \label{10372}
\int_{A_i} \lf|V_1 e^{-\tau U} \rg|^qdx=O\lf( \rho^{{\be_i\over \al_i}(2-2q)-q}   \rg)
\end{eqnarray}
for any $i$ even, in view of \eqref{dei}. By \eqref{veujo2} we get the estimate
\begin{eqnarray} \label{0054}
\int_{A_i} \lf|V_0 e^{ U} \rg|^qdx&=&
 O\bigg(\rho^{q\over \tau} \de_i^{2+{2q \over \tau}} \int\limits_{\frac{A_i}{\delta_i} } \Big[\frac{|y|^{\alpha_i-2}}{(1+|y|^{\alpha_i})^2 }  \Big]^{-{q\over \tau} } dy\bigg) = O\big(\rho^{-q+q\eta_q}\big) \nonumber 
\end{eqnarray}
for all $i$ even, in view of Lemma \ref{estint}. Similarly, by \eqref{vetuje2} we deduce that
\begin{eqnarray} \label{0107}
\int_{A_i} \lf| V_1 e^{-\tau  U} \rg|^q dx=O(\rho^{-q+q\eta_q})
\end{eqnarray}
for $i $ odd, again in view of Lemma \ref{estint}. Therefore, by using \eqref{1036} and \eqref{0054} and that $\frac{\be_i}{\al_i}$ are decreasing, we deduce that
\begin{equation}\label{v0euq}
\lf\| V_0e^{U  } \rg\|_{q}^q=\sum_{i=1\atop i \text{ odd}}^{m} O\lf( \rho^{{\be_i\over \al_i}(2-2q) - q } \rg) + O\big(\rho^{-q+q\eta_q}\big)=  O\lf( \rho^{ {\be_1\over \al_1}(2-2q) -q }\rg) \quad\text{for any } q\ge 1
\end{equation}
and, by using \eqref{10372} and \eqref{0107} we obtain that 
\begin{equation}\label{v1etuq}
\lf\| V_1e^{ -\tau U } \rg\|_{q}^q= O\lf( \rho^{ {\be_1\over \al_1}(2-2q) -q }\rg)\quad\text{for any } q\ge 1.
\end{equation}
 
On the other hand, using the estimate $|e^a-1|\le C |a|$ uniformly for any $a$ in compact subsets of $\R$, H\"older's inequality 
with ${1\over s_i'}+{1\over t_i'}=1$, $i=0,1$, $\|\ti\phi_{\mu_i}\|\le M \rho^{\eta_p}|\log \rho|\le C$, $i=0,1$ and triangle inequality  we find that
\begin{equation}\label{ev}
 \big\| V_i e^{(-\tau)^{i} (U+\ti\phi_{\mu_i} )}\big\|_{q}=O\big( \rho^{ \eta_{0,qs_i'} -1 + \eta_p}|\log \rho|+ \rho^{ \eta_{0,q} -1 } \big),
 \end{equation}
where for $q>1$ we denote $ \eta_{0, q } = {\be_1(2- 2  q)\over \al_1 q }$ (on the line of \cite[eq.(3.14) proof of Lemma 3.3]{F1}). Note that $\eta_{0,q}<0$ for any $q>1$.
Also, choosing $q$ and $s_i'$, $i=0,1$, close enough to 1, we get that  $0< \eta_p +\eta_{0,qs_i'}$. Now, we can conclude the estimate by using \eqref{mvtn}-\eqref{ev} to get
\begin{equation*}
\begin{split}
\|N(\phi_1)-N(\phi_2)\|_p &\,
\le\,C \sum_{i=0}^1\rho  \|V_i e^{ (- \tau)^{i}(U+\ti\phi_{\mu_i} )}\|_{p r_i}  \| \phi_{\theta_i} \| \| \phi_1-\phi_2\| 
 \le\,C \sum_{i=0}^1 \rho^{\eta_p +\eta_{0,pr_i} }  | \log \rho| \| \phi_1-\phi_2\|
\end{split}
\end{equation*}
and \eqref{estnphi} follows, where $\eta''_p={1\over 2}\min\{ \eta_p + \eta_{0,pr_i}  \ : \ i=0,1\} >0$ choosing $r_i$ close to 1 so that $\eta_p+\eta_{0,pr_i} >0$ for $i=0,1$. Let us stress that $p>1$ is chosen so that $\eta_p>0$ and $\eta_p'>0$.
\end{proof}

\begin{proof}[\bf Proof of the Proposition \ref{p3}]  Taking into account Proposition \ref{p2}, estimates \eqref{re}, \eqref{estlaphi}, \eqref{estnphi}, \eqref{estnphi1} and standard arguments it turns out that for all $\rho>0$ sufficiently small $\ml{A}$ is a
contraction mapping of $\ml{F}_M$ (for $M$ large enough), and
therefore a unique fixed point of $\ml{A}$ exists in $\ml{F}_M$, where $\ml{A}(\phi):= -T(R +\Lambda(\phi) +N(\phi))$ and $
\ml{F}_M = \{\phi\in H_0^1(\Om_\e) : \| \phi \| \le
M \rho^{\eta_p} |\log\rho|\}$. See the proof of Proposition 3.2 in \cite{EFP} for more details.
\end{proof}

\begin{proof}[\bf Proof of the Theorem \ref{main}] Taking into account \eqref{solurho} and the definition of $U$, the existence of a solution to equation \eqref{spsh} follows directly by Proposition \ref{p3}. The asymptotic behavior of $u_\rho$ as $\rho\to 0^+$ follows from \eqref{waje} in Lemma \ref{expaU} and estimate for $\phi$ in Proposition \ref{p3}. Furthermore, we have that $u_\rho$ has the desired concentration properties \eqref{aburo}  as $\rho\to 0^+$
locally uniformly in $\bar\Om \sm\{0\}$.	
\end{proof}


\section{Problem in mean field form \eqref{mfsh}}
\subsection{Error estimate}
Assume that $\de_i$'s are given by \eqref{dei} with $\al_i$'s, $\be_i$'s and $d_i$'s defined in \eqref{ali}, \eqref{beimp}-\eqref{beimi} and \eqref{di}, respectively, and $\e$ is given by \eqref{eps}. Let us evaluate the error term in $\|\cdot\|_p$, encoded in \eqref{Rmf}.
\begin{lem}\label{estrr0mf}
There exist $\rho_0>0$, a constant $C>0$ and $1<p_0<2$, \st for any $\rho\in(0,\rho_0)$ and $p\in(1,p_0)$ it holds
\begin{equation}\label{remf}
\|\ml R\|_p\le  C\rho^{\eta_p}
\end{equation}
for some $\eta_p>0$.
\end{lem}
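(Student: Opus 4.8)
The plan is to mirror the proof of Lemma \ref{estrr0} for the Liouville-form error $R$, reducing the mean-field error $\ml{R}$ to the Liouville one plus controllable corrections coming from the integral denominators $\int_{\Om_\e}V_0e^U$ and $\int_{\Om_\e}V_1e^{-\tau U}$. The key observation is that, by the change of variables built into \eqref{la0}--\eqref{la1} and the choice of $\e=\e(\rho)$, the quantities $\la_0/\int_{\Om_\e}V_0e^U$ and $\la_1\tau/\int_{\Om_\e}V_1e^{-\tau U}$ play, respectively, the roles of $\rho$ and $\rho\nu=\rho$ up to a multiplicative factor $1+o(1)$. So the first step is to compute the asymptotics of these two integrals as $\rho\to0$.

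First I would split $\int_{\Om_\e}V_0e^U=\sum_{j=1}^m\int_{A_j}V_0e^U$ and, using the expansions \eqref{veujo} and \eqref{vetuje2} together with the change of variables $x=\de_jy$, observe that the odd-indexed annuli $A_j$ (where $U$ is large and positive) give the dominant contribution while the even-indexed ones contribute a lower-order term $O(\rho^{1/\tau})$ relative to the main term; on each odd $A_j$ one gets $\int_{A_j}V_0e^U=\rho^{-1}\int_{A_j/\de_j}\frac{2\al_j^2|y|^{\al_j-2}}{(1+|y|^{\al_j})^2}(1+O(\de_j|y|+\rho^\eta))\,dy$, and since $\frac{A_j}{\de_j}\to\R^2$ and $\int_{\R^2}|y|^{\al_j-2}e^{w_{1,\al_j}}\,dy=4\pi/\al_j\cdot\al_j=4\pi$ (the total mass of a singular Liouville bubble of parameter $\al_j$ is $4\pi\al_j/\al_j$, i.e. the standard normalization $\int_{\R^2}|y|^{\al-2}\frac{2\al^2}{(1+|y|^\al)^2}\,dy=4\pi\al$), one obtains $\int_{\Om_\e}V_0e^U=\rho^{-1}\big(4\pi\sum_{j\text{ odd}}\al_j+o(1)\big)$. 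A symmetric computation using \eqref{vetuje} and \eqref{veujo2} gives $\int_{\Om_\e}V_1e^{-\tau U}=\rho^{-1}\tau^{-1}\big(4\pi\sum_{j\text{ even}}\al_j+o(1)\big)$. The whole point of imposing \eqref{la0}--\eqref{la1} is that $4\pi\sum_{j\text{ odd}}\al_j=\la_0$ and $4\pi\tau\sum_{j\text{ even}}\al_j=\la_1\tau^2$ exactly (this should be verified from \eqref{sai} and \eqref{ali}), so that $\frac{\la_0}{\int_{\Om_\e}V_0e^U}=\rho(1+o(1))$ and $\frac{\la_1\tau}{\int_{\Om_\e}V_1e^{-\tau U}}=\rho(1+o(1))$, with the $o(1)$ of order $\rho^{\eta'}$ for some $\eta'>0$.

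Next I would write
\[
\ml{R}=\Delta U+\rho V_0e^{U}-\rho V_1e^{-\tau U}+\Big(\tfrac{\la_0}{\int_{\Om_\e}V_0e^U}-\rho\Big)V_0e^{U}-\Big(\tfrac{\la_1\tau}{\int_{\Om_\e}V_1e^{-\tau U}}-\rho\Big)V_1e^{-\tau U}=R+E_0-E_1,
\]
where $R$ is exactly the Liouville error controlled by Lemma \ref{estrr0}, so $\|R\|_p\le C\rho^{\eta_p}$. For the remaining pieces, by the previous step $\big|\tfrac{\la_0}{\int_{\Om_\e}V_0e^U}-\rho\big|\le C\rho^{1+\eta'}$ and likewise for the other coefficient, while $\|V_0e^U\|_p$ and $\|V_1e^{-\tau U}\|_p$ are already estimated in the Liouville section (see \eqref{v0euq}--\eqref{v1etuq}, which give $\|V_0e^U\|_p\le C\rho^{\be_1(2-2p)/(\al_1 p)-1}$, i.e. of order $\rho^{-1+\eta_p}$ for $p$ close to $1$). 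Multiplying, $\|E_0\|_p+\|E_1\|_p\le C\rho^{1+\eta'}\cdot\rho^{-1+\eta_p}=C\rho^{\eta'+\eta_p}$. Taking $\eta_p$ (possibly shrinking it) to be the minimum of the exponent from Lemma \ref{estrr0} and $\eta'+\eta_p$, and choosing $p_0>1$ close enough to $1$ so all exponents stay positive, yields \eqref{remf}.

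The main obstacle I anticipate is the bookkeeping in the first step: one must show not merely that the integrals are $\rho^{-1}(C+o(1))$ but that the error is a genuine power $\rho^{\eta'}$ with $\eta'>0$, uniformly, which requires controlling the tails of the rescaled integrals over $\frac{A_j}{\de_j}$ (the regions near $|y|\sim\sqrt{\de_{j-1}/\de_j}$ and $|y|\sim\sqrt{\de_{j+1}/\de_j}$) and the cross-contributions from the non-dominant annuli — this is exactly the type of estimate packaged in Appendix Lemma \ref{estint}, and I would invoke it rather than redo it. A secondary point requiring care is checking the algebraic identities $4\pi\sum_{j\text{ odd}}\al_j=\la_0$ and $4\pi\tau\sum_{j\text{ even}}\al_j=\la_1\tau^2$ under both parity cases of \eqref{la0}--\eqref{la1}, using \eqref{sai}; this is routine but must be done separately for $m$ even and $m$ odd.
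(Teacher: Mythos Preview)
Your approach is correct and takes a somewhat different route from the paper. Both arguments begin identically: compute $\int_{\Om_\e}V_0e^U=\rho^{-1}\big(4\pi\sum_{j\text{ odd}}\al_j+O(\rho^\eta)\big)$ and $\int_{\Om_\e}V_1e^{-\tau U}=\rho^{-1}\tau^{-1}\big(4\pi\sum_{j\text{ even}}\al_j+O(\rho^\eta)\big)$, and identify the leading constants with $\la_0$ and $\la_1\tau^2$ via \eqref{la0}--\eqref{la1}. From there the paper proceeds by deriving pointwise analogues of \eqref{veujo}--\eqref{vetuje2} for the mean-field nonlinearity on each annulus (its equations \eqref{l0veujo}--\eqref{la1veujo}) and then repeats the annulus-by-annulus $L^p$ estimate exactly as in Lemma~\ref{estrr0}. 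You instead write $\ml R=R+E_0-E_1$ globally and invoke Lemma~\ref{estrr0} as a black box for $R$, bounding the correction terms through \eqref{v0euq}--\eqref{v1etuq}. Your decomposition is more economical and reuses Lemma~\ref{estrr0} directly; the paper's version is slightly more repetitive but yields the local expansions \eqref{l0veujo}--\eqref{la1veujo}, which it reuses in the subsequent estimate of $\Lambda_0(\phi)$.

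Two small slips to fix: the correct identity is $4\pi\sum_{j\text{ even}}\al_j=\la_1\tau^2$ (your extra factor of $\tau$ is spurious), and the exponent $\be_1(2-2p)/(\al_1 p)$ in $\|V_0e^U\|_p$ is \emph{negative} for $p>1$, not positive; your product bound $\|E_0\|_p\le C\rho^{\eta'+\be_1(2-2p)/(\al_1 p)}$ is still positive for $p$ close to $1$ since the second term tends to $0$, so the conclusion stands, but the phrasing ``of order $\rho^{-1+\eta_p}$'' should read $\rho^{-1-o_p(1)}$.
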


\begin{proof}[\dem]
By using \eqref{veujo} and \eqref{vetuje2} we have that
\begin{equation}\label{intv0eu}
\begin{split}
\int_{\Om_\e}&\, V_0(x) e^{U}  =  \sum_{j=1}^m \int_{A_j\over \de_j} \de_j^2 V_0(\de_j y) e^{U(\de_j y)}dy
= \sum_{j=1\atop j\text{ odd}}^m {1\over \rho} \int_{A_j\over \de_j} {2\al_j^2|y|^{\al_j-2}\over (1+|y|^{\al_j})^2} \lf[1+O(\de_j|y| + \rho^\eta)\rg]dy \\
& + \sum_{j=1\atop j\text{ even} }^mO\bigg(\rho^{1/\tau} \de_j^{2+2/ \tau} \int_{A_j\over \de_j} {(1+|y|^{\al_j})^{2/\tau}\over |y|^{(\al_j-2)/\tau}} dy\bigg)
= {1\over\rho} \bigg[4\pi\sum_{j=1\atop j\text{ odd}}^m\al_j + O(\rho^{\eta})\bigg]
\end{split}
\end{equation}
and similarly, from \eqref{veujo2} and \eqref{vetuje} we get that
\begin{equation}\label{intv1etu}
\begin{split}
\int_{\Om_\e} V_1(x) e^{-\tau U } = &\,{1\over\rho \tau} \bigg[4\pi\sum_{j=1\atop j\text{ even}}^m\al_j + O(\rho^{\eta})\bigg],
\end{split}
\end{equation}
in view of $\int\limits_{\mathbb R^2}  {|y|^{\alpha_i-2}\over \left(1+|y|^{\al_i}\right)^2}dy={2\pi\over\alpha_i}.$ From assumptions \eqref{la0}-\eqref{la1} it follows that $\la_0=4\pi\sum_{j=1\atop j\text{ odd}}^m\al_j$ and $\la_1\tau^2=4\pi\sum_{j=1\atop j\text{ even}}^m\al_j$
so that, by using \eqref{intv0eu}-\eqref{intv1etu} we find that for $y\in \frac{A_j}{\de_j}$ and any $j$ odd
\begin{equation}\label{l0veujo}
\la_0{ V_0(\de_j y) e^{U(\de_j y)} \over \int_{\Om_\e} V_0(x)e^{U} }= {2\al_j^2|y|^{\al_j-2}\over \de_j^2(1+|y|^{\al_j})^2} \lf[1+O(\de_j|y| + \rho^\eta)\rg]
\end{equation}
and
\begin{equation}\label{la1veujo2}
\la_1\tau{ V_1(\de_j y) e^{-\tau U(\de_j y)} \over \int_{\Om_\e} V_1(x) e^{-\tau U} } =  O\bigg(\rho^{1+\tau} \de_j^{2\tau} {(1+|y|^{\al_j})^{2\tau} \over |y|^{(\al_j-2)\tau }} \bigg).
\end{equation}
Similarly, for any $y\in \frac{A_j}{\de_j}$ and $j$ even
\begin{equation}\label{l0veujo2}
\la_0{ V_0(\de_j y) e^{U(\de_j y)} \over \int_{\Om_\e} V_0(x)e^{U} }= O\bigg(\rho^{1+1/\tau} \de_j^{2/\tau} {(1+|y|^{\al_j})^{2/\tau} \over |y|^{(\al_j-2)/\tau }} \bigg)
\end{equation}
and
\begin{equation}\label{la1veujo}
\la_1\tau{ V_1(\de_j y) e^{-\tau U(\de_j y)} \over \int_{\Om_\e} V_1(x) e^{-\tau U} } =  {2\al_j^2|y|^{\al_j-2}\over \de_j^2\tau (1+|y|^{\al_j})^2} \lf[1+O(\de_j|y| + \rho^\eta)\rg].
\end{equation}
Hence, similar to the proof of Lemma \ref{estrr0} by using \eqref{ludjy}-\eqref{ludmy} and \eqref{l0veujo}-\eqref{la1veujo} for $\de_1 y\in A_1$ we get that
\begin{equation}\label{estRd1mf}
\begin{split}
\ml R(\de_1 y) 
=&\,  {1\over\de_1^2} {2\al_1^2|y|^{\al_1-2}\over (1+|y|^{\al_1})^2} O(\de_1|y| + \rho^\eta)  + O\bigg( \rho^{1+\tau} \de_1^{2\tau} {(1+|y|^{\al_1})^{2\tau} \over |y|^{(\al_1-2)\tau }}+ \sum_{i=2}^m \Big({\de_1\over \de_i}\Big)^{\al_i} {|y|^{\al_i-2}\over \de_1^2}\bigg),
\end{split}
\end{equation}
for $1<j<m$, $j$ odd	and $\de_jy\in A_j$
\begin{equation}
\begin{split}
\ml R(\de_j y)=&\,  {1\over\de_j^2} {2\al_j^2|y|^{\al_j-2}\over (1+|y|^{\al_j})^2} O(\de_j|y| + \rho^\eta) \\
&\, + O\bigg( \rho^{1+\tau} \de_j^{2\tau} {(1+|y|^{\al_j})^{2\tau} \over |y|^{(\al_j-2)\tau }} + \sum_{i=1}^{j-1} \Big({\de_i\over \de_j}\Big)^{\al_i} {1\over \de_j^2|y|^{\al_i+2}}+ \sum_{i=j+1}^m \Big({\de_j\over \de_i}\Big)^{\al_i} {|y|^{\al_i-2}\over \de_j^2}\bigg)
\end{split}
\end{equation}
for $1<j<m$, $j$ even and $\de_jy\in A_j$
\begin{equation}
\begin{split}
\ml R(\de_j y)=&\,  {1\over\de_j^2\tau} {2\al_j^2|y|^{\al_j-2}\over (1+|y|^{\al_j})^2} O(\de_j|y| + \rho^\eta) \\
&\, + O\bigg( \rho^{1+1/\tau} \de_j^{2/\tau} {(1+|y|^{\al_j})^{2/\tau} \over |y|^{(\al_j-2)/\tau }} + \sum_{i=1}^{j-1} \Big({\de_i\over \de_j}\Big)^{\al_i} {1\over \de_j^2|y|^{\al_i+2}}+ \sum_{i=j+1}^m \Big({\de_j\over \de_i}\Big)^{\al_i} {|y|^{\al_i-2}\over \de_j^2}\bigg)
\end{split}
\end{equation}
and for $\de_m y\in A_m$
\begin{equation}\label{estRdmmf}
\begin{split}
\ml R(\de_m y)=&\,  {1\over\de_m^2\tau^{\nu(m)} } {2\al_m^2|y|^{\al_m-2}\over (1+|y|^{\al_m})^2} O(\de_m|y| + \rho^\eta) \\
&\, + O\bigg( \rho^{1+\tau^{(-1)^{m+1} } } \de_m^{2\tau^{(-1)^{m+1} } } {(1+|y|^{\al_m})^{2\tau^{(-1)^{m+1} } } \over |y|^{(\al_m-2)\tau^{(-1)^{m+1} } }} + \sum_{i=1}^{m-1} \Big({\de_i\over \de_m}\Big)^{\al_i} {1\over \de_m^2|y|^{\al_i+2}} \bigg)
\end{split}
\end{equation}
By \eqref{estRd1mf}-\eqref{estRdmmf} and Appendix, Lemma \ref{estint}, it follows \eqref{remf} and this conclude the proof.
\end{proof}

\subsection{The nonlinear problem and proof of Theorem \ref{main2}.}
In this section we shall study the following nonlinear problem:
\begin{equation}\label{ephimf}
\left\{ \begin{array}{ll}
\mathcal L(\phi)= -[\mathcal R+\Lambda_0(\phi)+\mathcal N(\phi)] & \text{in } \Om_\e\\
\phi=0, &\text{on }\fr\Om_\e,
\end{array} \right.
\end{equation}
where the linear operators $\mathcal L,\Lambda_0$ are defined as
\begin{equation}\label{olmf}
\mathcal L(\phi) = \Delta \phi + K_0\lf(\phi - {1\over \la_0}\int_{\Om_\e} K_0 \phi  dx \rg) + K_1 \lf(\phi - {1\over \la_1\tau^2} \int_{\Om_\e} K_1\phi dx \rg)
\end{equation}
and
\begin{equation}\label{olamf}
\begin{split}
\Lambda_0 (\phi) =&\,  \la_0 {V_0 e^{U}\over\int_{\Om_\e} V_0e^{U} dx}\lf(\phi - {\int_{\Om_\e} V_0e^{U}\phi dx \over\int_{\Om_\e} V_0e^{U} dx } \rg)+\la_1\tau^2 {V_1e^{-\tau U}\over\int_{\Om_\e} V_1e^{-\tau U}dx }\lf(\phi - {\int_{\Om_\e} V_1 e^{-\tau U}\phi dx \over\int_{\Om_\e} V_1(x)e^{-\tau U}dx} \rg)\\
&\, - K_0\lf(\phi - {1\over \la_0}\int_{\Om_\e} K_0 \phi  dx \rg)-  K_1 \lf(\phi - {1\over \la_1\tau^2} \int_{\Om_\e} K_1\phi dx \rg)
\end{split}
\end{equation}
with
\begin{equation}\label{K12}
K_0=\sum_{k=1\atop k\text{ odd} }^{m}|x|^{\al_k-2}e^{w_k}, \quad K_1=\sum_{k=1\atop k\text{ even} }^{m}|x|^{\al_k-2}e^{w_k}.
\end{equation}
The nonlinear term $\mathcal N(\phi) $ is given by
\begin{equation}\label{nltmf}
\begin{aligned}
\mathcal N(\phi)= &\la_0 \lf[{V_0e^{U+\phi}\over\int_{\Om_\e}
V_0e^{U+\phi} dx}-{V_0 e^{U}\over\int_{\Om_\e}V_0e^{U} dx}-{V_0e^{U}\over\int_{\Om_\e}V_0
e^{U} dx}\lf(\phi - {\int_{\Om_\e} V_0e^{U}\phi dx \over\int_{\Om_\e}V_0
e^{U} dx} \rg)\rg]\\ 
& - \la_1\tau \lf[{V_1 e^{-\tau( U+\phi) } \over\int_{\Om_\e}
V_1 e^{-\tau (U+\phi)}dx}-{V_1 e^{-\tau U}\over\int_{\Om_\e} V_1 e^{-\tau U} dx } +\tau{V_1 e^{ - \tau U}\over\int_{\Om_\e}V_1
e^{-\tau U} dx }\lf(\phi - {\int_{\Om_\e} V_1e^{ - \tau U}\phi dx \over\int_{\Om_\e} V_1
e^{-\tau U} dx } \rg)\rg].
\end{aligned}
\end{equation}
It is readily checked that $\phi$ is a solution to \eqref{ephimf} if and only if $u_\e$ given by \eqref{solurho} is a solution to \eqref{mfsh}. In Section \ref{sec3} we will prove the following result.

\begin{prop}\label{elle}
For any $p>1,$ there exists $\rho_0>0 $ and  $C>0$ such that for any $\rho\in(0,\rho_0)$ and $h \in L^p(\Om_\e)$ there exists a unique $\phi\in H^1_0(\Om_\e)$ solution of
\begin{equation}\label{plmf}
\mathcal L(\phi)=h \ \hbox{ in }\ \Om_\e,\ \ \ \phi=0\ \hbox{ on }\ \partial\Omega_\e,
\end{equation}
which satisfies
\begin{equation}\label{estphi}
\|\phi\|\le C|\log\rho|\, \|h\|_p.
\end{equation}
\end{prop}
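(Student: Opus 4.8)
The proof of Proposition \ref{elle} will follow the same lines as its Liouville-form counterpart Proposition \ref{p2}, both belonging to the linear theory of Section \ref{sec3}: first one establishes the a priori bound \eqref{estphi} for solutions of \eqref{plmf} by a blow-up/contradiction argument carried out \emph{simultaneously} on the $m$ concentration scales $\de_1,\dots,\de_m$, and then existence and uniqueness follow from Fredholm's alternative, since $\ml L$ is a compact perturbation of $\lap$ on $H^1_0(\Om_\e)$. In contrast with $L$, the operator $\ml L$ drags along the two nonlocal averages $\int_{\Om_\e}K_0\phi$ and $\int_{\Om_\e}K_1\phi$, which have to be followed at every step.

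For the a priori estimate one argues by contradiction: were \eqref{estphi} to fail, there would be $\rho_n\to0^+$, the associated parameters of Section \ref{sec2} with $\e_n=\e(\rho_n)$, and $h_n\in L^p(\oen)$, $\phi_n\in H^1_0(\oen)$ with $\ml L_n(\phi_n)=h_n$, $\|\phi_n\|=1$ and $|\log\rho_n|\,\|h_n\|_p\to0$. Testing the equation against $\phi_n$ and using $|\int_{\oen}h_n\phi_n|\le C\|h_n\|_p\|\phi_n\|=o(1)$ gives $1=\int_{\oen}K\phi_n^2-\la_0^{-1}(\int_{\oen}K_0\phi_n)^2-(\la_1\tau^2)^{-1}(\int_{\oen}K_1\phi_n)^2+o(1)$, where $K=K_0+K_1=\sum_{i=1}^m|x|^{\al_i-2}e^{w_i}$ and $K_0,K_1$ are as in \eqref{K12}; since the two quadratic terms are nonnegative, it is enough, for a contradiction, to prove $\int_{\oen}K\phi_n^2\to0$. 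To this end one rescales about each bubble, putting $\widehat\phi_n^j(y):=\phi_n(\de_j^{(n)}y)-c_n^{\nu(j)}$ for $j=1,\dots,m$, with $c_n^0:=\la_0^{-1}\int_{\oen}K_0\phi_n$ and $c_n^1:=(\la_1\tau^2)^{-1}\int_{\oen}K_1\phi_n$. By the definition of the $w_k$ and the scale separation $\de_i/\de_j\to0$ for $i<j$, the rescaled potentials $(\de_j^{(n)})^2K(\de_j^{(n)}y)$ converge in $C^0_{loc}(\R^2\sm\{0\})$ to $W_j(y):=2\al_j^2|y|^{\al_j-2}/(1+|y|^{\al_j})^2$, while $\|\grad\widehat\phi_n^j\|_2\le\|\phi_n\|=1$; elliptic estimates then give, along a subsequence, $\widehat\phi_n^j\to\phi^j$ in $C^0_{loc}$ with $\phi^j$ a bounded solution of $\lap\psi+W_j\psi=0$ in $\R^2$. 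Since $\al_j\notin2\N$ for every $j$ (guaranteed by the choice \eqref{ali0}), such bounded solutions are precisely the multiples of $Z_j(y):=(1-|y|^{\al_j})/(1+|y|^{\al_j})$, so $\phi^j=a_jZ_j$ for some $a_j\in\R$.

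The crucial point — and the step I expect to be the main obstacle, and the reason the linear theory here is heavier than in \cite{AP,EFP} (a single scale) or \cite{GP,pr2} (a tower, but with a symmetry of $\Om$ annihilating the dilation modes) — is to show that every $a_j$ vanishes, that the constants $c_n^0,c_n^1$ stay bounded and tend to $0$, and, above all, that no $L^2(W_j\,dy)$-mass of $\widehat\phi_n^j$ leaks into the junctions between the shrinking annuli $A_j=\{\sqrt{\de_{j-1}\de_j}<|x|\le\sqrt{\de_j\de_{j+1}}\}$. The ingredients would be: propagating $\phi_n=0$ from $\fr\Om$ and from $\fr B(0,\e_n)$ inward and outward through the thin annuli by comparison with explicit logarithmic (harmonic) barriers — this is exactly where the loss $|\log\rho|$, originating from $\log(\de_{j+1}/\de_j)$ and $\log(1/\e_n)$, enters the estimate; matching the behaviour of $\phi^j$ as $|y|\to\infty$ (where $\phi^j\to-a_j$) with that of $\phi^{j+1}$ as $|y|\to0$ (where $\phi^{j+1}\to a_{j+1}$) across $A_j$; a Pohozaev-type identity obtained by testing the equation against a truncation of $Z_j$, which pins down the dilation coefficients; and the two consistency relations coming from recomputing $\int_{\oen}K_0\phi_n$ and $\int_{\oen}K_1\phi_n$ bubble by bubble. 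Once all these coefficients vanish one gets $\int_{A_j}K\phi_n^2\to0$ for each $j$, hence $\int_{\oen}K\phi_n^2\to0$, contradicting $\|\phi_n\|=1$.

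Finally, granted \eqref{estphi}, problem \eqref{plmf} is for $\rho$ small equivalent to $(\mathrm{Id}+\ml K)\phi=\ti h$ in $H^1_0(\oen)$ with $\ml K$ compact, and the a priori estimate forces $\ker(\mathrm{Id}+\ml K)=\{0\}$; thus Fredholm's alternative provides a unique solution, and \eqref{estphi} for it is just the a priori bound again. Equivalently, one can write $\ml L=L-\ml B$, with $L$ as in \eqref{ol} — invertible by Proposition \ref{p2} — and $\ml B(\phi)=\la_0^{-1}(\int_{\oen}K_0\phi)K_0+(\la_1\tau^2)^{-1}(\int_{\oen}K_1\phi)K_1$ of rank at most two, which reduces \eqref{plmf}, via $T=L^{-1}$, to a $2\times2$ linear system for the scalars $\int_{\oen}K_0\phi$, $\int_{\oen}K_1\phi$, whose matrix is checked to be invertible with controlled inverse.
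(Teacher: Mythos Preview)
Your global architecture is exactly that of the paper: contradict \eqref{estphi}, rescale at each $\de_j$, identify the limits of the shifted functions $\widehat\phi_n^{\,j}=\Psi_{\nu(j),j,n}$ as $a_jZ_j$ thanks to $\al_j\notin2\N$, show every $a_j$ and both nonlocal constants vanish, and then close by Fredholm. The two features you single out --- carrying the averages $\int K_l\phi_n$ along and the $|\log\rho|$ loss --- are indeed the points that distinguish $\ml L$ from $L$.

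Where you diverge from the paper is in the ``crucial point''. The paper does \emph{not} use harmonic barriers, $C^0$ matching across annuli, or Pohozaev identities. It works entirely in the weighted spaces $H_{\al_j},L_{\al_j}$ (compact embedding replaces your ``no mass leaking'' worry) and obtains $a_j=0$, $\ti c_l=0$ by testing \eqref{eqphin} against three explicit families: $P_\e Z_{0j}$ (yielding $\mu_j=0$ and $\sum_i a_i=0$), $P_\e w_j$ (yielding $\ti c_{\nu(j)}+a_j+2\sum_{i>j}a_i=0$, i.e.\ \eqref{tclaj2sai}), and --- the new ingredient over Proposition~\ref{p2} --- $P_\e\eta_j$ with $\eta_j(x)=\frac43\log(\de_j^{\al_j}+|x|^{\al_j})Z_{0j}+\frac{8}{3}\frac{\de_j^{\al_j}}{\de_j^{\al_j}+|x|^{\al_j}}$, which solves $\Delta\eta_j+|x|^{\al_j-2}e^{w_j}\eta_j=|x|^{\al_j-2}e^{w_j}Z_{0j}$ and produces the second relation $2\ti c_{\nu(j)}+a_j+4\sum_{i>j}a_i=0$. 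Subtracting the two gives $a_j=0$ immediately. Your matching recipe $-a_j+c_n^{\nu(j)}=a_{j+1}+c_n^{\nu(j+1)}$, together with the two boundary conditions, is only the \emph{difference} of consecutive \eqref{tclaj2sai} relations and leaves the system underdetermined by one; your ``consistency relations'' from recomputing $\int K_l\phi_n$ are tautological to leading order (since $\int W_jZ_j=0$), so they do not close it. You need an analogue of the $P_\e\eta_j$ test, which you do not supply.

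A second gap: you never establish that $c_n^0,c_n^1$ are bounded \emph{before} passing to limits. The paper does this first, by testing \eqref{eqPhi1} against a fixed bump $\chi$ with $\int\Pi_k\chi\neq0$ and reading off a diagonally dominant $2\times2$ system for $\ti c_{0,n},\ti c_{1,n}$; only then does \eqref{snpsi} give the $H_{\al_j}$ bound that feeds the compact embedding. Your appeal to ``elliptic estimates'' from $\|\nabla\widehat\phi_n^{\,j}\|_2\le1$ alone does not provide the needed weighted $L^2$ control in $\R^2$. Your final remark about the rank-two perturbation $\ml L=L-\ml B$ is a legitimate alternative route to existence, but the paper does not take it, and the invertibility of the resulting $2\times2$ matrix with a bound uniform in $\rho$ would still have to be checked.
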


\begin{lem}
There exist $p_0>1$ and $\rho_0>0$ so that for any $1<p<p_0$ and 
all $0<\rho\leq \rho_0$ it holds
\begin{equation}\label{estlaphimf}
\| \Lambda_0 (\phi) \|_p  \le C\rho^{\sigma_p'} \|\phi\|,
\end{equation}
for all $\phi\in H_0^1(\Om_\e)$ with $\|\phi\|\le M \rho^{\sigma_p}|\log\rho|$, for some $\sigma_p'>0$.
\end{lem}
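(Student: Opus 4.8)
The plan is to mirror the proof of the corresponding estimate \eqref{estlaphi} for the Liouville-form problem, accounting for the extra nonlocal terms appearing in $\Lambda_0$. First I would split $\Lambda_0(\phi)$ into four pieces according to \eqref{olamf}: the two "local" differences $\bigl(\la_0\frac{V_0e^{U}}{\int V_0e^{U}}-K_0\bigr)\phi$ and $\bigl(\la_1\tau^2\frac{V_1e^{-\tau U}}{\int V_1e^{-\tau U}}-K_1\bigr)\phi$, together with the two "average" corrections built from $\int_{\Om_\e}V_0e^{U}\phi$, $\int_{\Om_\e}K_0\phi$, etc. For the local pieces, I would use the key identities $\la_0=4\pi\sum_{j\text{ odd}}\al_j$, $\la_1\tau^2=4\pi\sum_{j\text{ even}}\al_j$ and the integral estimates \eqref{intv0eu}--\eqref{intv1etu} established in Lemma \ref{estrr0mf}, so that $\la_0\frac{V_0e^{U}}{\int V_0e^{U}}=\rho V_0e^{U}\,[1+O(\rho^\eta)]$ and similarly for the $V_1$ term. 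This reduces the local pieces to exactly the quantity $W-K$ (up to an $O(\rho^\eta)$ multiplicative factor and the replacement of $K$ by $K_0+K_1$), which was already controlled in the proof of \eqref{estlaphi} via the annulus-by-annulus change of variables $x=\de_i y$ and Lemma \ref{estint}; I would conclude $\|(\text{local part})\|_p\le C\rho^{\sigma_p'}\|\phi\|$ by the same Hölder argument, $\|(W-K)\phi\|_p\le\|W-K\|_{pr}\|\phi\|_{ps}$ with $r,s$ close to $1$.

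For the nonlocal (average) corrections, I would bound each of the four scalar integrals $\int_{\Om_\e}V_0e^{U}\phi$, $\int_{\Om_\e}K_0\phi$, $\int_{\Om_\e}V_1e^{-\tau U}\phi$, $\int_{\Om_\e}K_1\phi$ by Hölder as $\bigl|\int V_0e^{U}\phi\bigr|\le\|V_0e^{U}\|_{q}\|\phi\|_{q'}$, using \eqref{v0euq}--\eqref{v1etuq} (which give $\|V_0e^{U}\|_q^q=O(\rho^{\frac{\be_1}{\al_1}(2-2q)-q})$, hence $\|V_0e^{U}\|_q=O(\rho^{-1+\eta_{0,q}})$ with $\eta_{0,q}>0$ for $q$ near $1$) and the embedding $H^1_0(\Om_\e)\hookrightarrow L^{q'}(\Om_\e)$. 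Dividing by $\int V_0e^{U}=\frac1\rho[\la_0+O(\rho^\eta)]\gtrsim\rho^{-1}$, the ratio $\frac{\int V_0e^{U}\phi}{\int V_0e^{U}}$ is $O(\rho^{\eta_{0,q}}\|\phi\|)$, i.e. a scalar that is small in $\rho$. Multiplying this scalar by the density $\la_0\frac{V_0e^{U}}{\int V_0e^{U}}=O(\rho\, V_0e^{U})$ and taking $\|\cdot\|_p$ gives, again by \eqref{v0euq}, a bound $C\rho^{1+\eta_{0,q}}\|V_0e^{U}\|_p\|\phi\|=O(\rho^{\eta_{0,q}+\eta_{0,p}}\|\phi\|)$, which is $\le C\rho^{\sigma_p'}\|\phi\|$ once $p,q$ are chosen close enough to $1$ that the exponents are positive; the $K_0$- and $K_1$-terms are handled identically since $K_0,K_1$ enjoy the same pointwise bounds as $\rho V_0e^{U},\rho V_1e^{-\tau U}$ on each annulus.

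Collecting the four contributions and taking $\sigma_p'$ to be, say, half the minimum of all the positive exponents produced above, and $\sigma_p,\rho_0,p_0$ as dictated by the constraints in the references to $\eta_p$ and in \eqref{v0euq}--\eqref{v1etuq}, yields \eqref{estlaphimf}. The main obstacle, compared with the Liouville case, is the bookkeeping for the nonlocal terms: one must verify that dividing by the (large, $\sim\rho^{-1}$) normalizing integrals genuinely produces a gain rather than a loss in powers of $\rho$, which hinges on the strict positivity of $\eta_{0,q}=\frac{\be_1(2-2q)}{\al_1 q}$ for $q>1$ small — i.e. on $\be_1>0$, which holds by \eqref{be1} — and on choosing the several Hölder conjugate exponents simultaneously close to $1$ so that all the resulting exponents stay positive. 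Everything else is a routine repetition of the estimates already carried out for \eqref{estlaphi}, so I would simply write "arguing as in the proof of \eqref{estlaphi}" and indicate only the modifications above.
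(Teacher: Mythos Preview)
Your treatment of the local pieces $(W_i-K_i)\phi$ is fine and matches the paper. The gap is in the nonlocal terms, and it is a real one: you assert that $\eta_{0,q}=\dfrac{\be_1(2-2q)}{\al_1 q}$ is strictly positive for $q>1$ close to $1$, but since $\be_1>0$ and $2-2q<0$ for $q>1$, in fact $\eta_{0,q}<0$ (the paper itself notes this explicitly just after \eqref{ev}). Hence your bound $\bigl\|\tfrac{W_0}{\la_0}\int W_0\phi\bigr\|_p=O(\rho^{\eta_{0,p}+\eta_{0,q}}\|\phi\|)$ carries a \emph{negative} exponent in $\rho$, and the same holds for $\bigl\|\tfrac{K_0}{\la_0}\int K_0\phi\bigr\|_p$. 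Treated separately, these nonlocal contributions are not small; no choice of H\"older exponents near $1$ will make $\eta_{0,p}+\eta_{0,q}>0$.

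What the paper does instead is an algebraic rearrangement that forces a factor $(W_i-K_i)$ into every nonlocal term as well. Writing $W_i=\dfrac{\la_i\tau^{2i}V_ie^{(-\tau)^iU}}{\int V_ie^{(-\tau)^iU}}$, one has
\[
\Lambda_0(\phi)=\sum_{i=0}^1\Big[(W_i-K_i)\phi-\frac{1}{\la_i\tau^{2i}}(W_i-K_i)\int_{\Om_\e}W_i\phi+\frac{1}{\la_i\tau^{2i}}K_i\int_{\Om_\e}(K_i-W_i)\phi\Big].
\]
Now each of the three summands contains $\|W_i-K_i\|_q=O(\rho^{\sigma'_{i,q}})$ with $\sigma'_{i,q}>0$, and the accompanying factors $\|W_i\|_r$, $\|K_i\|_p$ contribute only the (harmless, arbitrarily small) negative exponents $\sigma_{3,r}=\frac{\be_1}{\al_1}(2-2r)$; since the positive $\sigma'_{i,q}$ dominates when all exponents are taken close to $1$, the combined exponent is positive. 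This cancellation is the missing idea in your argument: you must pair $-\tfrac{W_i}{\la_i\tau^{2i}}\int W_i\phi$ with $+\tfrac{K_i}{\la_i\tau^{2i}}\int K_i\phi$ \emph{before} estimating, not bound them individually.
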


\begin{proof}[\dem]
Arguing in the same way as in \cite[Lemma 3.3]{EFP}, denote $W_i= { \la_i\tau^{2i} V_i e^{(-\tau)^{i} U}\over \int_{\Om_\e} V_i e^{(-\tau)^{i} U } dx}$ for $i=0,1$. By using \eqref{l0veujo}, \eqref{l0veujo2}, Lemma \ref{estint} and similar computations to prove \eqref{estlaphi}, we find that $\lf\|W_0- K_0\rg\|_{L^q(A_i)}^q\le C \rho^{ q \sigma'_{0,q} } $ for any $i$ 
for some $\sigma'_{0,q}$. Similarly, we find that $\lf\|W_1- K_1\rg\|_{L^q(A_i)}^q \le C \rho^{q\sigma'_{1,q} } 
$ for any $i$ for some $\sigma'_{1,q}$. It is possible to see that taking $q>1$ close enough to 1, we get that $\sigma'_{i,q}>0$ for $i=0,1$. 

\medskip \noindent Notice that $\Lambda_0$ is a linear operator and we re-write $\Lambda_0(\phi)$ as
\begin{equation*}
\begin{split}
\Lambda_0(\phi)=&\,\sum_{i=0}^1\bigg[ \lf(W_i-K_i\rg)\phi-{1\over\la_i\tau^{2i} } \lf(W_i-K_i \rg) \int_{\Om_\e}W_i\phi+ {1\over\la_i \tau^{2i} } K_i \int_{\Om_\e} \lf(K_i-W_i\rg)\phi
 \bigg].
\end{split}
\end{equation*}
Hence, we get that
\begin{equation*}
\begin{split}
\| \Lambda_0 (\phi) \|_p  
\le &\, \sum_{i=0}^1\bigg[\lf\| W_i-K_i\rg\|_{pr_{i0}} \|\phi \|_{ps_{i0} }+{ \|W_i\|_{r_{i1}} \over\la_i \tau^{2i} } \lf \| W_i-K_i \rg\|_p \|\phi\|_{s_{i1} } + {\| K_i\|_p\over\la_i \tau^{2(i-1)} }  \lf\| K_i-W_i\rg\|_{r_{i2}} \|\phi \|_{s_{i2}} \bigg]\\
\le &\, C\sum_{i=0}^1\bigg[\rho^{\sigma'_{i,pr_{i0} } }  \|\phi \| + \rho^{\sigma'_{i,p} +\sigma_{3,r_{i1}  }}  \|\phi\| + \rho^{\sigma'_{i,r_{i2} } +\sigma_{3,p} }  \|\phi \| \bigg]
\end{split}
\end{equation*}
and \eqref{estlaphimf} follows, where $ \sigma'_p=\min\lf\{\sigma'_{i,pr_{i0} }; \sigma'_{i,p} +\sigma_{3,r_{i1}  } ;  \sigma'_{i,r_{i2}} +\sigma_{3,p  } \mid i=0,1\rg\}$ with $ r_{ij} $, $s_{ij} $, $i=0,1$, $j=0,1,2$ satisfying $\frac{1}{ r_{ij} }+\frac{1}{ s_{ij} }=1$. We have used that
\begin{equation*}
\begin{split}
\|W_0\|_{r_{01}}^{r_{01} } &\le C\bigg[ \sum_{j=1\atop j \text{ odd}}^{m} \de_j^{2-2r_{01} }\int_{A_j\over \de_j} \lf| {2\al_j^2|y|^{\al_j -2}\over (1+|y|^{\al_i})^2 } \rg|^{r_{01} } + \sum_{j=1\atop j \text{ even}}^m\rho^{(1+1/\tau) r_{01} }\de_j^{2+2{r_{01}\over \tau} } \int_{A_j\over \de_j}\bigg|{(1+|y|^{\al_j})^{2/\tau}\over |y|^{(\al_j-2)/\tau}}\bigg|^{r_{01}}\bigg] \\
&\le C\rho^{\sigma_{3,r_{01}}} 
\end{split}
\end{equation*}
and similarly, $\|W_1\|_{r_{11}}^{r_{11} } \le C\rho^{\sigma_{3,r_{11}} } $, 
where $\sigma_{3,q}= {\be_1\over \al_1}(2-2 q) $ and similarly that $\| K_i\|_p^p\le C\rho^{\sigma_{3,p}   }$, $i=0,1$. Note that 
${2- 2  q\over\al_j q}<1$ for any $j=1,\dots,m$. 
Furthermore, we have used the H\" older's inequality $\|uv\|_q\le \|u\|_{qr}\|v\|_{qs}$ with ${1\over r }+{1\over s }=1$ and the inclusions $L^{p}(\Om_\e)\hookrightarrow L^{pr}(\Om_\e)$  for any $r>1$ and $H^{1}_0(\Om_\e)\hookrightarrow L^{q}(\Om_\e)$ for any $q>1$. Let us stress that we can choose $p$, $r_{ij}$ and $s_{ij}$, $i=1,2$, $j=0,1,2$, close enough to 1 such that $\sigma_p'>0$.
\end{proof}

\medskip
\begin{lem}
There exist $p_0>1$ and $\rho_0>0$ so that for any $1<p<p_0$ and 
all $0<\rho\leq \rho_0$ it holds
\begin{equation}\label{estnphimf}
\| \mathcal N (\phi_1)- \ml N(\phi_2) \|_p  \le C\rho^{\sigma_p''} \|\phi_1-\phi_2\|
\end{equation}
for all $\phi_i\in H_0^1(\Om_\e)$ with $\|\phi_i\|\le M \rho^{\sigma_p}|\log\rho|$, $i=1,2$, and for some $\sigma_p''>0$. In particular, we have that
\begin{equation}\label{estnphi1mf}
\| \ml N (\phi) \|_p  \le C\rho^{\sigma_p''} \ \|\phi\|
\end{equation}
for all $\phi\in H_0^1(\Om_\e)$ with $\|\phi\|\le M \rho^{\sigma_p}|\log\rho|$.
\end{lem}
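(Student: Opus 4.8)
The argument runs parallel to the proof of \eqref{estnphi} and to \cite[Lemma 5.1]{AP}, \cite[Lemma 3.4]{EFP}, \cite[Lemma 3.3]{F1}; the only new feature is that the two nonlinearities are normalised by the integrals $I_i(\psi):=\int_{\Om_\e}V_ie^{(-\tau)^i(U+\psi)}dx$, so the first task will be to control these denominators along the whole segment joining $\phi_2$ to $\phi_1$. For $i=0,1$ put $g_i(\phi)=\la_i\tau^{i}V_ie^{(-\tau)^i(U+\phi)}/I_i(\phi)$, so that $\ml N(\phi)=\sum_{i=0}^1(-1)^i[g_i(\phi)-g_i(0)-Dg_i(0)[\phi]]$ and, applying the mean value theorem twice (with $\phi_{\theta_i}=\theta_i\phi_1+(1-\theta_i)\phi_2$ and $\ti\phi_{\mu_i}=\mu_i\phi_{\theta_i}$ for suitable $\theta_i,\mu_i\in[0,1]$),
\begin{equation*}
\ml N(\phi_1)-\ml N(\phi_2)=\sum_{i=0}^1(-1)^i\,D^2g_i(\ti\phi_{\mu_i})[\phi_{\theta_i},\phi_1-\phi_2].
\end{equation*}
A direct computation shows that $D^2g_i(\psi)[\phi_{\theta_i},\phi_1-\phi_2]$ is, up to constants depending only on $m$ and $\tau$, a finite sum of products of the normalised profile $V_ie^{(-\tau)^i(U+\psi)}/I_i(\psi)$ with two factors, each of which is $\phi_{\theta_i}$, $\phi_1-\phi_2$, or a scalar $I_i(\psi)$-weighted average over $\Om_\e$ of $\phi_{\theta_i}$, $\phi_1-\phi_2$, or $\phi_{\theta_i}(\phi_1-\phi_2)$.

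The first step is to prove that there is $c>0$ with $I_i(\psi)\ge c\,\rho^{-1}$ for $i=0,1$, uniformly for $\psi\in H_0^1(\Om_\e)$ with $\|\psi\|\le M\rho^{\sigma_p}|\log\rho|$. I would obtain this from Jensen's inequality applied to the probability measure $d\mu_i=V_ie^{(-\tau)^iU}dx/I_i(0)$: since $I_i(\psi)=I_i(0)\int_{\Om_\e}e^{(-\tau)^i\psi}d\mu_i\ge I_i(0)\exp\left((-\tau)^i\int_{\Om_\e}\psi\,d\mu_i\right)$, where $I_i(0)\sim c_i\,\rho^{-1}$ by \eqref{intv0eu}--\eqref{intv1etu}, while H\"older together with \eqref{v0euq}--\eqref{v1etuq} gives $\left|\int_{\Om_\e}\psi\,d\mu_i\right|\le C\rho^{\eta_{0,q}+\sigma_p}|\log\rho|\to0$ for $q>1$ close to $1$, where $\eta_{0,q}=\frac{\be_1(2-2q)}{\al_1q}$ satisfies $\eta_{0,q}<0$ and $\eta_{0,q}\to0$ as $q\to1^+$. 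Combined with the Moser--Trudinger inequality on $\Om_\e$, which yields $\int_{\Om_\e}e^{q\tau\psi}dx\le C$ for each fixed $q$ once $\|\psi\|$ is small (in particular $\|e^{(-\tau)^i\psi}\|_q\le C$), this shows that the normalised profile $V_ie^{(-\tau)^i(U+\psi)}/I_i(\psi)$ can be handled exactly as $\rho\,V_ie^{(-\tau)^i(U+\psi)}$ was in the proof of \eqref{estnphi}.

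With this at hand, I would bound each term of $D^2g_i(\ti\phi_{\mu_i})[\phi_{\theta_i},\phi_1-\phi_2]$ in $\|\cdot\|_p$ by nested H\"older inequalities: the weighted averages are scalars bounded respectively by $C\|\phi_{\theta_i}\|$, $C\|\phi_1-\phi_2\|$, and $C\|\phi_{\theta_i}\|\,\|\phi_1-\phi_2\|$ (again by H\"older and Moser--Trudinger); for the profile one combines \eqref{v0euq}--\eqref{v1etuq} with $\|e^{(-\tau)^i\ti\phi_{\mu_i}}\|_q\le C$ to obtain, as in \eqref{ev}, $\|V_ie^{(-\tau)^i(U+\ti\phi_{\mu_i})}\|_{q}\le C\rho^{\eta_{0,q}-1}$, and the denominator $I_i(\ti\phi_{\mu_i})$ contributes $O(\rho)$ by the lower bound just established; finally $\|\phi_{\theta_i}\|\le M\rho^{\sigma_p}|\log\rho|$. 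Collecting powers of $\rho$, the leading term is bounded by $C\rho^{\sigma_p+\eta_{0,pr_i}}|\log\rho|\,\|\phi_1-\phi_2\|$ for suitable H\"older exponents $r_i$, and the remaining terms are no worse, carrying extra small averaged factors. Choosing $p>1$ and the exponents close enough to $1$ so that $\sigma_p+\eta_{0,pr_i}>0$ for $i=0,1$, and setting $\sigma_p''=\frac12\min\{\sigma_p+\eta_{0,pr_i}:i=0,1\}>0$, one obtains \eqref{estnphimf}; estimate \eqref{estnphi1mf} follows by taking $\phi_2=0$, since $\ml N(0)=0$. The step I expect to be the main obstacle is the uniform lower bound $I_i(\phi)\ge c\rho^{-1}$ over the whole fixed-point ball $\{\|\phi\|\le M\rho^{\sigma_p}|\log\rho|\}$; once it is available the estimate is essentially a transcription of the Liouville-form argument behind \eqref{estnphi}.
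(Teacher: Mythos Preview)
Your argument is correct and follows essentially the same route as the paper: express $\mathcal N$ through $g_i$, apply the mean value theorem twice to reduce to $D^2g_i(\tilde\phi_{\mu_i})[\phi_{\theta_i},\phi_1-\phi_2]$, and then estimate each of the five terms by nested H\"older inequalities together with a uniform lower bound on the denominator $I_i$. Two small points deserve comment. First, for the lower bound $I_i(\psi)\ge c\rho^{-1}$ you use Jensen's inequality, which is perfectly valid and arguably cleaner; the paper instead writes $I_i(\psi)=I_i(0)+[I_i(\psi)-I_i(0)]$ and bounds the correction directly (via H\"older and Moser--Trudinger, in the spirit of \cite[eq.~(3.14)]{F1}), obtaining $\|V_ie^{(-\tau)^i(U+\tilde\phi_{\mu_i})}\|_1\ge C_i/(2\rho)$. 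Second, your assertion that the terms carrying averaged factors are ``no worse'' is slightly imprecise: each scalar $\langle\psi\rangle_{i,\phi}$ is bounded only by $C\rho^{\eta_{0,q'}}\|\psi\|$ with $\eta_{0,q'}<0$, not by $C\|\psi\|$, so the term $g_i\langle\phi_{\theta_i}\rangle\langle\phi_1-\phi_2\rangle$ is in fact the worst and accounts for the factor $3$ in the paper's final exponent $\sigma_p''=\tfrac12\min\{\sigma_p+3\sigma_{3,pr_i}\}$. This does not affect your conclusion, since all such negative exponents tend to $0$ as the H\"older parameters approach $1$.
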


\begin{proof}[\dem] Arguing in the same way as in \cite[Lemma 5.1]{AP}, we denote $g_i(\phi)= {V_i(x) e^{(-\tau)^{i}(U+\phi)} \over \int_{\Om_\e} V_i(x)e^{  (-\tau)^{i}( U+\phi)} }$ and point out that
$$\ml N(\phi)=\sum_{i=0}^1\la_i(-\tau)^{i}\lf\{g_i(\phi)-g_i(0)-g'_i(0)[\phi] \rg\} \ \ \ \text{and by the mean value theorem we get that}$$
\begin{equation}\label{mvtnmf}
\ml N(\phi_1)-\ml N(\phi_2) =\sum_{i=1}^2\la_i ( - \tau )^{i} g_i''(\ti \phi_{\mu_i}) [\phi_{\theta_i}, \phi_1-\phi_2],
\end{equation}
where $\phi_{\theta_i}=\theta_i\phi_1+(1-\theta_i)\phi_2$, $\ti\phi_{\mu_i}=\mu_i\phi_{\theta_i}$ for some $\theta_i,\mu_i\in[0,1]$, $i=0,1$, and
\begin{equation*}
\begin{split}
g_i''(\phi)[\psi,v]=&\,\tau^{2i}\bigg[ { V_i(x)e^{ u_i } \psi v\over \int_{\Om_\e} V_i(x)e^{ u_i} }- { V_i(x)e^{u_i } v \int_{\Om_\e} V_i(x)e^{ u_i }\psi \over \big(\int_{\Om_\e} V_i(x)e^{ u_i }\big)^2 }- { V_i(x)e^{ u_i } \psi \int_{\Om_\e} V_i(x)e^{ u_i } v \over \big(\int_{\Om_\e} V_i(x)e^{ u_i } \big)^2 }\\
&\,- { V_i(x)e^{ u_i }  \int_{\Om_\e} V_i(x)e^{ u_i }\psi v\over \big(\int_{\Om_\e} V_i(x)e^{ u_i }\big)^2 } +2{ V_i(x)e^{ u_i } \int_{\Om_\e} V_i(x) e^{ u_i  }v \int_{\Om_\e} V_i(x)e^{ u_i }\psi \over \big(\int_{\Om_\e} V_i(x)e^{ u_i }\big)^3 }\bigg],
\end{split}
\end{equation*}
where for simplicity we denote $ u_i= (-\tau)^{i}(U+\phi)$. Using H\"older's inequalities we get that
\begin{equation}\label{hin}
\begin{split}
\lf\| g_i''(\phi)[\psi,v]\rg\|_p
\le&\,C \lf[{\| V_ie^{ u_i } \|_{pr_i}\over \|V_ie^{ u_i}\|_1 }+ {\| V_ie^{u_i }\|_{pr_i}^2  \over \|V_ie^{ u_i }\|_1^2 }  +{ \|V_ie^{ u_i }\|_{p r_i}^3  \over \|V_ie^{ u_i }\|_1^3 }  \rg]\|\psi\| \| v\|,
\end{split}
\end{equation}
with $ {1\over r_i} +{1\over s_i} + {1\over t_i}=1$, $ {1\over r_i}+{1\over q_i}=1$, $ {1\over pr_i} + {1\over \ti r_i}=1$. We have used the H\" older's inequality, the inclusions presented in the previous Lemma and $ \|uvw\|_q\le \|u\|_{qr}\|v\|_{qs}\|w\|_{qt}$ with ${1\over r }+{1\over s }+{1\over t}=1$. Now, let us estimate $ {\| V_ie^{ u_i } \|_{pr_i}\over \| V_i e^{ u_i}\|_1 }$ with $\phi=\ti\phi_{\mu_i}$, $i=0,1$. Taking into account  \eqref{v0euq}-\eqref{v1etuq} and \eqref{intv0eu}-\eqref{intv1etu}, we obtain that for $i=0,1$
$$\lf\| V_ie^{(-\tau)^{i} U  } \rg\|_{q}^q=O\lf( \rho^{ {\be_1\over \al_1}(2-2q)-q }\rg) \quad\text{for any } q\ge 1\quad\text{and}\quad \lf\| V_ie^{(- \tau)^{i} U } \rg\|_{1}\ge {C_i\over \rho }\quad\text{for some } C_i> 0.$$
Note that $ \sigma_{3,q} -1\le {2-(\al_i+2)q\over \al_i q}\quad\text{ for any} \quad i=1,\dots,m$.  By the previous estimates we find that 
$ \big\| V_i e^{(-\tau)^{i} U+\ti\phi_{\mu_i}}\big\|_{q}=O\big( \rho^{ \eta_{0,qs_i'} -1 + \eta_p}|\log \rho|+ \rho^{ \eta_{0,q} -1 } \big)$ (on the line of \cite[eq. (3.14) proof of Lemma 3.3]{F1}. Also, choosing $s_i$, $i=0,1$, close enough to 1, we get that  $\sigma_p +\eta_{0,s_i}>0$ and 
$$\lf\| V_ie^{(-\tau)^{i}(U+\ti\phi_{\mu_i} ) }\rg\|_{1}\ge {C_i \over \rho } - C \rho^{\eta_{0,s_i} -1+ \sigma_p}|\log \rho| \ge { 1 \over \rho } \lf(C_i -  C \rho^{\eta_{0,s_i} + \sigma_p }\  |\log \rho| \rg)\ge {C_i \over 2 \rho } .$$
Taking $q=pr_i$, we obtain the estimate for $i=0,1$
\begin{equation}\label{qie}
\begin{split}
{\| V_ie^{(-\tau)^{i}( U+\ti\phi_{\mu_i}) } \|_{pr_i}\over \| V_i e^{(-\tau)^{i}(  U +\ti\phi_{\mu_i} ) }\|_1 }
&=O\lf(  \rho^{\sigma_{3,pr_i} } \Big[ \rho^{\sigma_p+\sigma_{3,pr_is_i} - \sigma_{pr_i} } |\log \rho|+ 1\Big]     \rg)
=O\lf(  \rho^{ \sigma_{3,pr_i} }    \rg)
\end{split}
\end{equation}
choosing $s_i>1$ close enough to 1 so that $ \sigma_p +\sigma_{3,pr_is_i} - \sigma_{pr_i}>0$, $i=1,2$. 
Now, we can conclude the estimate by using \eqref{mvtnmf}-\eqref{qie} to get
\begin{equation*}
\begin{split}
\|\ml N(\phi_1)-\ml N(\phi_2)\|_p 
&\, \le\,C \sum_{i=0}^1\la_i \rho^{\sigma_p + 3 \sigma_{3,pr_i} }  | \log \rho| \| \phi_1-\phi_2\|
 \le\,C \rho^{\sigma_p'' }  \|\phi_1-\phi_2\|,
\end{split}
\end{equation*}
where $\sigma''_p={1\over 2}\min\{ \sigma_p +3 \sigma_{3,pr_i}  \ : \ i=0,1\} >0$ choosing $r_i$ close to 1 so that $\sigma_p+3 \sigma_{3,pr_i} >0$ for $i=0,1$. Let us stress that $p>1$ is chosen so that $\sigma_p>0$.
\end{proof}

\medskip
Taking into account previous estimates \eqref{remf}, \eqref{estlaphimf}, \eqref{estnphimf} and \eqref{estnphi1mf}, and by using the same argument as in the proof of Proposition \ref{p3} we conclude the following 

\begin{prop}\label{p3mf}
There exist $p_0>1$ and $\rho_0>0$ so that for any $1<p<p_0$ and 
all $0<\rho\leq \rho_0$, the problem \eqref{ephimf} admits a unique solution $\phi(\rho) \in H_0^1(\Om_\e)$, where $\mathcal L$, $\mathcal R$, $\Lambda_0(\phi)$ and $\mathcal N$ are given by \eqref{olmf}, \eqref{Rmf}, \eqref{olamf} and \eqref{nltmf}, respectively. Moreover, there is a constant $C>0$ such that
$$\|\phi\|_\infty\le C\rho^{\sigma_p} |\log \rho |,$$
for some $\sigma_p>0$.
\end{prop}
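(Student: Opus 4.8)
The plan is to reproduce verbatim the scheme used for Proposition \ref{p3}: solve \eqref{ephimf} by a contraction mapping argument anchored on the linear theory of Proposition \ref{elle}. By that proposition, for a fixed $p>1$ and $\rho$ small the operator $\ml L$ of \eqref{olmf} is invertible, and its inverse $\ml T\colon L^p(\Om_\e)\to H^1_0(\Om_\e)$ is linear and continuous with $\|\ml T(h)\|\le C|\log\rho|\,\|h\|_p$ by \eqref{estphi}. Hence $\phi\in H^1_0(\Om_\e)$ solves \eqref{ephimf} if and only if $\phi$ is a fixed point of $\ml A(\phi):=-\ml T\big(\ml R+\Lambda_0(\phi)+\ml N(\phi)\big)$, which is well defined on $\ml F_M$ since each summand lies in $L^p(\Om_\e)$ by \eqref{remf}, \eqref{estlaphimf}, \eqref{estnphi1mf}. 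I would seek the fixed point in the ball $\ml F_M:=\{\phi\in H^1_0(\Om_\e)\colon\ \|\phi\|\le M\rho^{\sigma_p}|\log\rho|\}$, where $\sigma_p>0$ is the exponent $\eta_p$ furnished by the error estimate \eqref{remf} and $M>0$ is a large constant to be fixed at the end.

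First I would check that $\ml A$ maps $\ml F_M$ into itself: for $\phi\in\ml F_M$, combining \eqref{estphi} with \eqref{remf}, with the bound \eqref{estlaphimf} for $\Lambda_0$, and with the bound \eqref{estnphi1mf} for $\ml N$,
$$\|\ml A(\phi)\|\le C|\log\rho|\big(\|\ml R\|_p+\|\Lambda_0(\phi)\|_p+\|\ml N(\phi)\|_p\big)\le C|\log\rho|\Big(\rho^{\sigma_p}+\big(\rho^{\sigma_p'}+\rho^{\sigma_p''}\big)\|\phi\|\Big)\le C\rho^{\sigma_p}|\log\rho|\Big(1+M\big(\rho^{\sigma_p'}+\rho^{\sigma_p''}\big)|\log\rho|\Big);$$
choosing $M\ge 2C$ and then $\rho_0$ so small that $M(\rho^{\sigma_p'}+\rho^{\sigma_p''})|\log\rho|\le 1$ for $\rho\le\rho_0$, the right-hand side is $\le M\rho^{\sigma_p}|\log\rho|$, so $\ml A(\ml F_M)\subseteq\ml F_M$. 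Next I would check that $\ml A$ is a contraction: since $\Lambda_0$ is linear, for $\phi_1,\phi_2\in\ml F_M$,
$$\|\ml A(\phi_1)-\ml A(\phi_2)\|\le C|\log\rho|\big(\|\Lambda_0(\phi_1-\phi_2)\|_p+\|\ml N(\phi_1)-\ml N(\phi_2)\|_p\big)\le C|\log\rho|\big(\rho^{\sigma_p'}+\rho^{\sigma_p''}\big)\|\phi_1-\phi_2\|$$
by \eqref{estlaphimf} and \eqref{estnphimf}, and after a further shrinking of $\rho_0$ this is $\le\frac12\|\phi_1-\phi_2\|$. The Banach fixed point theorem then yields a unique $\phi=\phi(\rho)\in\ml F_M$ solving \eqref{ephimf}, in particular $\|\phi\|\le M\rho^{\sigma_p}|\log\rho|$.

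It then remains to upgrade the $H^1_0$ control to the pointwise estimate. Feeding the fixed point back into \eqref{ephimf}, the datum $h:=-(\ml R+\Lambda_0(\phi)+\ml N(\phi))$ satisfies $\|h\|_p\le C\rho^{\sigma_p}$ (the $\Lambda_0$ and $\ml N$ contributions being $o(\rho^{\sigma_p})$ on $\ml F_M$), and then $\|\phi\|_\infty\le C\rho^{\sigma_p}|\log\rho|$ follows from the elliptic regularity / Green's-function bootstrap applied to $\ml L(\phi)=h$ on the bounded domain $\Om_\e$, exactly as in the proof of Proposition 3.2 in \cite{EFP} and in \cite[Lemma~5.1]{AP}, the zeroth-order terms $K_0,K_1$ and the integral corrections in \eqref{olmf} being absorbed by the usual weighted estimates. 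I do not expect any genuinely new obstacle here: the whole argument is mechanical \emph{once} the four estimates \eqref{remf}, \eqref{estlaphimf}, \eqref{estnphimf}, \eqref{estnphi1mf} and the invertibility Proposition \ref{elle} are in hand. The only delicate bookkeeping is that the $|\log\rho|$ loss from \eqref{estphi} must not cancel the gains $\rho^{\sigma_p'},\rho^{\sigma_p''}$, which forces $p$ and all the intermediate Hölder exponents in \eqref{estlaphimf}--\eqref{estnphimf} to be taken close enough to $1$; the truly hard input — and the real bottleneck — is the linear theory of Section \ref{sec3} behind Proposition \ref{elle}.
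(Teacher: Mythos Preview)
Your proposal is correct and follows exactly the same approach as the paper: the paper's own proof simply says that by combining estimates \eqref{remf}, \eqref{estlaphimf}, \eqref{estnphimf}, \eqref{estnphi1mf} with the same contraction-mapping argument used for Proposition \ref{p3} (and detailed in \cite[Proposition~3.2]{EFP}), the map $\ml A(\phi)=-\ml T(\ml R+\Lambda_0(\phi)+\ml N(\phi))$ is a contraction on $\ml F_M$. You have in fact written out more of the routine details than the paper itself does.
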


\begin{proof}[\bf Proof of the Theorem \ref{main2}] Arguing in the same way as in the proof of Theorem \ref{main}, the existence of a solution \eqref{solurho} to equation \eqref{mfsh} follows directly by Proposition \ref{p3mf} and the definition of $U$ in \eqref{ansatz}.
\end{proof}


\section{The linear theory}\label{sec3}
\noindent In this section we present the invertibility of the linear operators $L$ and $\ml L$ defined in \eqref{ol} and \eqref{olmf} respectively. Roughly speaking in the scale annulus $\frac{A_j}{\de_j}$ the operator $L$ apporaches to the following linear operator in $\R^2$
$$L_j(\phi)=\Delta\phi+{2\al_j^2|y|^{\al_j-2}\over (1+|y|^{\al_j})^2}\phi,\qquad j=1,\dots,m.$$
It is well known that, in case $\al_j\in 2\N$, the bounded solutions of $L_j(\phi)=0$ in
$\R^2$ are precisely linear combinations of the functions
\begin{equation*}
Y_{1j}(y) = { |y|^{\al_j\over 2} \over 1+|y|^{\al_j}}\cos\Big({\al_j\over 2}\theta\Big),\quad Y_{2j}(y) = { |y|^{\al_j\over 2} \over 1+|y|^{\al_j}}\sin\Big({\al_j\over 2}\theta\Big)\quad\text{and}\quad Y_{0j}(y) = \,{1-|y|^{\al_j}\over 1+|y|^{\al_j}},
\end{equation*}
which are written in polar coordinates for $j=1,\dots,m$. See \cite{DEM5} for a proof. In our case, we will consider solutions of $L_j(\phi)=0$ with $\al_j\notin 2\N$ for all $j=1,\dots,m$ such that $\int_{\R^2}|\nabla\phi(y)|^2\, dy<+\infty$, which are multiples of $Y_{0j}$, see \cite[Theorem A.1]{AP} for a proof. 
Another key element in the study of $\ml L$, which shows technical details, is to get rid of the presence of
\begin{equation}\label{ctjphi}
\ti c_{j}(\phi)=-{1\over \la_j\tau^{2j}}\int_{\Om_{\e}} K_j\phi\qquad j=0,1.
\end{equation}

Let us introduce the following Banach spaces for $j=1,2,\dots,m$
\begin{equation}\label{lai}
L_{\al_j}(\R^2)=\lf\{u\in W_{\text{loc} }^{1,2}(\R^2)\ :\ \int_{\R^2}{|y|^{\al_j-2}\over (1+|y|^{\al_j})^2}|u(y)|^2\, dy<+\infty\rg\}
\end{equation}
and
\begin{equation}\label{hai}
H_{\al_j}(\R^2)=\lf\{u\in W_{\text{loc} }^{1,2}(\R^2)\ :\ \int_{\R^2}|\nabla u(y)|^2\, dy+\int_{\R^2}{|y|^{\al_j-2}\over (1+|y|^{\al_j})^2}|u(y)|^2\, dy<+\infty\rg\}
\end{equation}
endowed with the norms
$$\|u\|_{L_{\al_j}}:=\lf(\int_{\R^2}{|y|^{\al_j-2}\over (1+|y|^{\al_j})^2}|u(y)|^2\, dy\rg)^{1/2}$$
and
$$\|u\|_{H_{\al_j}}:=\lf(\int_{\R^2} |\nabla u(y)|^{2}\, dy+\int_{\R^2}{|y|^{\al_j-2}\over (1+|y|^{\al_j})^2}|u(y)|^2\, dy\rg)^{1/2}.$$
We point out the compactness of the embedding $i_{\al_j}:H_{\al_j}(\R^2)\to L_{\al_j}(\R^2)$, (see for example \cite{GP}).

\begin{proof}[\bf Proof of Proposition \ref{p2}]

Let us assume the existence of $p>1$, sequences $\rho_n\to0$, $\e_n=\e(\rho_n)\to 0$, functions $h_n\in L^p(\Om_{\e_n})$, $\phi_n\in H_0^{1}(\Om_{\e_n})$ such that
$$L(\phi_n)=h_n\ \ \text{in}\ \ \Om_{\e_n},\ \ \phi_n=0\ \ \text{on}\ \ \fr\Om_{\e_n}$$
$\|\phi_n\|=1$ and $|\log\rho_n|\, \|h_n\|_p=o(1)$ as $n\to+\infty$. We will omit the subscript $n$ in $\de_{i,n} =\de_i$. Recall that $\de_i^{\al_i}=d_{i,n}\rho_n^{\be_i}$. Now, define $\Phi_{i,n}(y):=\phi_{i,n}(\de_i y)$ for $y\in\Om_{i,n}:= \de_i^{-1}\Om_{\e_n}$. Thus, extending $\phi_n=0$ in $\R^2\sm\Om_{\e_n}$ and arguing in the same way as in \cite[Claim 1, section 4]{F1} we can prove the following fact. We provide a sketch of the proof. 

\begin{claim}\label{claim2}
There holds that the sequence $\Phi_{i,n}$ converges (up to subsequence) to $\Phi_i^*=a_iY_{0i}$ for $i=1,\dots,m$, weakly in $H_{\al_i}(\R^2)$ and strongly in $L_{\al_i}(\R^2)$ as $n\to+\infty$ for some constant $a_{i}\in\R$, $i=1,\dots,m$.
\end{claim}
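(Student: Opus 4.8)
The plan to establish Claim~\ref{claim2} runs along the lines of \cite{GP,pr2,F1}: an a priori bound from the equation, the compactness of the embedding $i_{\al_i}$, and a classification result. \emph{First, a uniform bound in $H_{\al_i}(\R^2)$.} Testing $L(\phi_n)=h_n$ against $\phi_n$ and integrating by parts yields $\int_{\Om_{\e_n}}|\grad\phi_n|^2=\int_{\Om_{\e_n}}K\phi_n^2-\int_{\Om_{\e_n}}h_n\phi_n$; since $\|\phi_n\|=1$, the Sobolev embedding $H^1_0(\Om_{\e_n})\hookrightarrow L^{p'}(\Om_{\e_n})$ (uniform in $n$ because $\Om_{\e_n}\subset\Om$) and $\|h_n\|_p=o(1)$ give $\int_{\Om_{\e_n}}K\phi_n^2=1+o(1)$. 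As $K=\sum_{i=1}^m|x|^{\al_i-2}e^{w_i}$ is a sum of nonnegative functions, each term obeys $\int_{\Om_{\e_n}}|x|^{\al_i-2}e^{w_i}\phi_n^2\le C$; the change of variables $x=\de_iy$ together with $e^{w_i(x)}=2\al_i^2\de_i^{\al_i}(\de_i^{\al_i}+|x|^{\al_i})^{-2}$ turns this integral into $2\al_i^2\|\Phi_{i,n}\|_{L_{\al_i}}^2$, so $\|\Phi_{i,n}\|_{L_{\al_i}}\le C$. Since the Dirichlet integral is scale invariant in dimension two, $\int_{\R^2}|\grad\Phi_{i,n}|^2=\int_{\Om_{\e_n}}|\grad\phi_n|^2=1$, hence $\{\Phi_{i,n}\}_n$ is bounded in $H_{\al_i}(\R^2)$ for each $i$. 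By reflexivity and the compactness of $i_{\al_i}:H_{\al_i}(\R^2)\to L_{\al_i}(\R^2)$, along a common subsequence $\Phi_{i,n}\rightharpoonup\Phi_i^*$ weakly in $H_{\al_i}(\R^2)$ and strongly in $L_{\al_i}(\R^2)$, with $\Phi_i^*\in H_{\al_i}(\R^2)$ of finite Dirichlet energy.

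\emph{Next, the limiting equation.} After rescaling, $\Phi_{i,n}$ solves $\Delta\Phi_{i,n}+V_{i,n}\Phi_{i,n}=g_{i,n}$ in $\Om_{i,n}=\de_i^{-1}\Om_{\e_n}$ with $V_{i,n}(y)=\de_i^2K(\de_iy)$ and $g_{i,n}(y)=\de_i^2h_n(\de_iy)$. One checks $\|g_{i,n}\|_{L^p(\Om_{i,n})}=\de_i^{2-2/p}\|h_n\|_p\to0$. The $k=i$ summand of $V_{i,n}$ is exactly $2\al_i^2|y|^{\al_i-2}(1+|y|^{\al_i})^{-2}$, while each summand with $k\ne i$ is, on every annulus $\{r\le|y|\le 1/r\}$, of order $(\de_{\min(i,k)}/\de_{\max(i,k)})^{\al_k}$ times a fixed power of $|y|$, hence tends to $0$ there because $\de_i/\de_j\to0$ for $i<j$; thus $V_{i,n}\to 2\al_i^2|y|^{\al_i-2}(1+|y|^{\al_i})^{-2}$ uniformly on compact subsets of $\R^2\sm\{0\}$. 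Since $\e_n/\de_i\to0$ the hole $\de_i^{-1}B(0,\e_n)$ contracts to the origin and $\de_i^{-1}\fr\Om$ escapes to infinity, so $\Om_{i,n}$ exhausts $\R^2\sm\{0\}$. Passing to the limit in the weak formulation tested against $\psi\in C^\infty_c(\R^2\sm\{0\})$ — weak $H^1_{\mathrm{loc}}$ convergence for the gradient term, strong $L^2_{\mathrm{loc}}$ convergence of $\Phi_{i,n}$ off the origin combined with the uniform convergence of $V_{i,n}$ for the potential term, and $\|g_{i,n}\|_p\to0$ for the right-hand side — shows $L_i(\Phi_i^*)=0$ weakly in $\R^2\sm\{0\}$; since $\Phi_i^*\in\dot H^1(\R^2)$ and a point has zero $H^1$-capacity in the plane, the singularity is removable and $L_i(\Phi_i^*)=0$ holds in all of $\R^2$.

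\emph{Finally, classification.} Because $\al_i\notin 2\N$ for every $i=1,\dots,m$ (guaranteed by \eqref{ali0}), the only solutions of $L_i(\phi)=0$ in $\R^2$ with finite Dirichlet energy are the multiples of $Y_{0i}$, by \cite[Theorem A.1]{AP}; hence $\Phi_i^*=a_iY_{0i}$ for some $a_i\in\R$, which is the assertion of the claim. The main difficulty I expect is in the middle step: controlling $V_{i,n}$ uniformly across the overlapping scaled annuli and making the passage to the limit rigorous near the origin — this is precisely where the scale separations $\de_i/\de_j\to0$, $\e_n/\de_i\to0$ and the finite-energy bound from the first step are all used together.
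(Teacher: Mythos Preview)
Your proof is correct and follows essentially the same route as the paper's: you test the equation against $\phi_n$ to obtain $\sum_i 2\al_i^2\|\Phi_{i,n}\|_{L_{\al_i}}^2=1+o(1)$ (the paper's identity \eqref{npsi}), use scale invariance of the Dirichlet integral for the $H_{\al_i}$ bound, pass to the limit via the uniform convergence of $\de_i^2K(\de_i y)$ on compacta of $\R^2\setminus\{0\}$ (the paper's \eqref{dik}), extend across the origin, and conclude by the finite-energy classification \cite[Theorem A.1]{AP} since $\al_i\notin 2\N$. Your write-up is in fact slightly cleaner in two places: you make explicit that nonnegativity of each summand of $K$ immediately gives the individual $L_{\al_i}$ bounds, and you invoke the classification directly rather than the paper's stray phrase ``by using symmetry assumptions if necessary,'' which is not needed here.
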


\begin{proof}[\dem]
First, notice that $\|\Phi_{i,n}\|_{H^1_0(\Omega_{i,n})}=1$, for $i=1,\dots,m$.  Then, we want to prove that there is a constant $M>0$ such for all $n$ (up to a subsequence) $\|\Phi_{i,n}\|_{L_{\al_i}}^2\le M$. Notice that for any $i\in\{1,\dots,m\}$ we find that
 in $\Om_{i,n}$
\begin{equation}\label{eqPhi0}
\begin{split}
\lap \Phi_{i,n}&+\de_i^2K (\de_i y) \Phi_{i,n} =\de_i^2h_n(\de_i y).
\end{split}
\end{equation}
Furthermore, it follows that $\Phi_{i,n}\to\Phi_i^*$ weakly in $H^1_0(\Omega_{i,n})$ and strongly in $L^p(K)$ for any $K$   compact sets in $\mathbb R^2$.
Now, we multiply \eqref{eqPhi0} by $\Phi_{i,n}$ for any $i\in\{1,\dots,m\}$, integrate by parts and we obtain that
\begin{equation}\label{npsi}
\sum_{i=1}^{m} 2\al_i^2\| \Phi_{i,n}\|_{L_{\al_i}}^2
=\,1 + o(1)
\end{equation}
since
\begin{equation}\label{dik}
\de_i^2K( \de_i y)=\ds{2\al_i^2|y|^{\al_i-2}\over (1+|y|^{\al_i})^2}+O\bigg(\sum_{j<i}\Big({\de_j\over \de_i}\Big)^{\al_j} + \sum_{i<j} \Big({\de_i\over \de_j}\Big)^{\al_j}\bigg)\quad \text{ for all }i=1,\dots,m
\end{equation}
uniformly for $y$ on compact subsets of $\R^2\sm\{0\}$. Thus, we deduce that $\| \Phi_{i,n}\|_{L_{\al_i}}^2
=O(1)$. Therefore, the sequence $\{\Phi_{i,n}\}_n$ is bounded in $H_{\al_i}(\R^2)$, so that there is a subsequence $\{\Phi_{i,n}\}_n$ and functions $\Phi_i^*$, $i=1,2$ such that $\{\Phi_{i,n}\}_n$ converges to $\Phi_i^*$ weakly in $H_{\al_i}(\R^2)$ and strongly in $L_{\al_i}(\R^2)$.  Hence, taking into account \eqref{eqPhi0} and \eqref{dik} we deduce that $\Phi^*_i$ a solution to 
$$\lap\Phi+{2\al_i^2|y|^{\al_i-2}\over(1+|y|^{\al_i})^2}\Phi=0,\qquad i=1,\dots,m,\qquad  \text{in $\R^2\sm\{0\}$}.$$
It is standard that $\Phi_i^*$, $i=1,\dots,m$ extend to a solution in the whole $\R^2$. Hence, by using symmetry assumptions if necessary, we get that $\Phi^*_i=a_iY_{0i}$ for some constant $a_{i}\in\R$, $i=1,\dots,m$.
\end{proof}

For the next step consider for any $j\in\{1,\dots, m\}$ the functions $Z_{0j}(x)=Y_{0j}(\de_j^{-1} x)=\frac{\de_j^{\al_j}-|x|^{\al_j}}{\de_j^{\al_j}+|x|^{\al_j}}$ so that $-\Delta Z_{0j}=|x|^{\al_j-2}e^{w_j}Z_{0j}.$
By similar arguments as to obtain expansion in Lemma \ref{ewfxi}, 
it follows that  $P_\e Z_{0j}=Z_{0j} +1 -  \gamma_0 G(x,0)+O(\rho^{\ti\sigma})$ uniformly in $\Om_\e$ for some $\ti\sigma>0$, as a consequence of \cite[Lemma 4.1]{EFP} with $m=1$ and $\xi_1=0$. Now, introduce the coefficient $\gamma_{0}$ as
\begin{equation}\label{gama0}
\gamma_{0}\lf[-{1\over 2\pi}\log\e+H(0,0) \rg]=2, 
\end{equation}
so that from \eqref{eps}, $ \gamma_{0}
= -\frac{4\pi(\al_1-2)}{\be_1+1} \cdot\frac{1}{\log\rho} + O\big({1\over |\log\rho|^2}\big)$. 

\bs
For simplicity, we shall denote $\Pi_i(y)=\frac{2\al_i^2|y|^{\al_i-2}}{(1+|y|^{\al_i})^2}$, for $i=1,\dots,m$.

\begin{claim}\label{claimmu}
For all $j=1,\dots,m$, there exist the limit, up to subsequences,
\begin{equation}\label{muj}
\mu_j=\lim_{n\to+\infty} \log\rho_n \int_{\Om_{j,n} }\Pi_j(y)\Phi_{j,n}(y)\, dy.
\end{equation}
Furthermore, it holds
\begin{equation}\label{eqmu1}
\begin{split}
2\sum_{i=1}^m a_i=\bigg(-\frac{\be_1+1}{4\pi(\al_1-2)} + \sum_{i=1}^m(-1)^{i+1}\frac{\be_i}{2\pi \al_i}\bigg) \mu_1,
\end{split}
\end{equation}
and for all $j=1,\dots,m-1$.
\begin{equation}\label{eqmuj}
\mu_j=(-1)^{j-1} \mu_1.
\end{equation}
\end{claim}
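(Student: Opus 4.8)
The strategy is to test the equation $L(\phi_n)=h_n$ against the projected functions $P_\e Z_{0j}$, $j=1,\dots,m$, and to extract \eqref{eqmu1} and \eqref{eqmuj} from the resulting $m$ identities after multiplying by $\log\rho_n$.

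\textbf{Step 1 (a testing identity).} Fix $j$, multiply $L(\phi_n)=h_n$ by $P_\e Z_{0j}$, integrate over $\oen$, and integrate by parts twice; using $\Delta P_\e Z_{0j}=\Delta Z_{0j}=-|x|^{\al_j-2}e^{w_j}Z_{0j}$ in $\oen$, the expansion $P_\e Z_{0j}=Z_{0j}+1-\gamma_0 G(\cdot,0)+O(\rho_n^{\ti\sigma})$, and $K=\sum_i|x|^{\al_i-2}e^{w_i}$, the diagonal term $\int_{\oen}|x|^{\al_j-2}e^{w_j}Z_{0j}\phi_n$ cancels. Since $\de_i/\de_j\to0$ for $i<j$ and $\de_j/\de_i\to0$ for $i>j$, one has $Z_{0j}=1+O(\rho_n^\sigma)$ on the support of the $i$-th bubble when $i<j$ and $Z_{0j}=-1+O(\rho_n^\sigma)$ when $i>j$; together with the change of variables $x=\de_i y$ (which gives $\int_{\oen}|x|^{\al_i-2}e^{w_i}\phi_n=\int_{\R^2}\Pi_i\Phi_{i,n}$) and the bounds $\|\Phi_{i,n}\|_{L_{\al_i}}=O(1)$ from Claim \ref{claim2}, this yields
\[
\int_{\oen}h_n\,P_\e Z_{0j}=2\sum_{i<j}\int_{\R^2}\Pi_i\Phi_{i,n}+\int_{\R^2}\Pi_j\Phi_{j,n}-\gamma_0\int_{\oen}K\phi_n\,G(\cdot,0)+o\big(|\log\rho_n|^{-1}\big).
\]

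\textbf{Step 2 (the $\log\rho_n$–scaled system).} Expanding $G(\de_i y,0)=-\tfrac{\be_i}{2\pi\al_i}\log\rho_n-\tfrac{\log d_{i,n}}{2\pi\al_i}+h_0-\tfrac1{2\pi}\log|y|+O(\de_i|y|)$ and using the two elementary identities $\int_{\R^2}\Pi_iY_{0i}=0$ and $\int_{\R^2}\Pi_iY_{0i}\log|y|=-4\pi$ together with the $L_{\al_i}$–convergence $\Phi_{i,n}\to a_iY_{0i}$, one gets $\int_{\R^2}\Pi_i\Phi_{i,n}\to0$ and $\int_{\R^2}\Pi_i\Phi_{i,n}\log|y|\to-4\pi a_i$. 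Put $t_{i,n}:=\log\rho_n\int_{\R^2}\Pi_i\Phi_{i,n}$. Recalling $\gamma_0\log\rho_n\to-\tfrac{4\pi(\al_1-2)}{\be_1+1}$ (consequence of \eqref{gama0} and \eqref{eps}) and $|\log\rho_n|\,\|h_n\|_p\to0$, every term genuinely of lower order survives multiplication of the identity of Step 1 by $\log\rho_n$ as $o(1)$ (here the expansions of Lemma \ref{ewfxi}, the bounds $\|\Phi_{i,n}\|_{L_{\al_i}}=O(1)$ and $\rho$–power gains for off-diagonal bubbles are used), and we obtain for each $j=1,\dots,m$
\[
o(1)=2\sum_{i<j}t_{i,n}+t_{j,n}+\frac{\gamma_0\log\rho_n}{2\pi}\sum_{i=1}^m\frac{\be_i}{\al_i}\,t_{i,n}+B_n,
\]
where $B_n:=\tfrac{\gamma_0\log\rho_n}{2\pi}\sum_i\int_{\R^2}\Pi_i\Phi_{i,n}\log|y|$ is a bounded, $j$–independent sequence with $B_n\to\tfrac{8\pi(\al_1-2)}{\be_1+1}\sum_{i=1}^m a_i$.

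\textbf{Step 3 (conclusion).} Subtracting the $(j{+}1)$-st identity from the $j$-th kills everything except $t_{j,n}+t_{j+1,n}$, so $t_{j,n}=(-1)^{j-1}t_{1,n}+o(1)$ for $j=1,\dots,m$. Inserting this into the $j=1$ identity gives $c_n t_{1,n}=-B_n+o(1)$ with
\[
c_n:=1+\frac{\gamma_0\log\rho_n}{2\pi}\sum_{i=1}^m(-1)^{i-1}\frac{\be_i}{\al_i}\ \longrightarrow\ 1-\frac{2(\al_1-2)}{\be_1+1}\sum_{i=1}^m(-1)^{i+1}\frac{\be_i}{\al_i}=:c_\infty .
\]
Since $c_\infty\ne0$ (an explicit function of the $\al_i$'s and $\be_i$'s, equal up to a nonzero factor to the coefficient of $\mu_1$ in \eqref{eqmu1}, which is seen to be nonzero from the algebraic relations of Section \ref{sec2} and the fact that $\be_i/\al_i$ is positive and strictly decreasing), the sequence $(t_{1,n})$ is bounded, so along a subsequence $t_{1,n}\to\mu_1$; then $t_{j,n}\to\mu_j:=(-1)^{j-1}\mu_1$, which is the existence of the limit in \eqref{muj} together with identity \eqref{eqmuj}. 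Passing to the limit in the $j=1$ identity and solving for $\sum_i a_i$ (using $\sum_i\tfrac{\be_i}{\al_i}\mu_i=\mu_1\sum_i(-1)^{i+1}\tfrac{\be_i}{\al_i}$) yields exactly \eqref{eqmu1}.

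\textbf{Main obstacle.} Two points require care: first, the bookkeeping in Steps 1--2, namely isolating the $\log\rho_n$–divergent part of $G(\de_i y,0)$ (which pairs with $t_{i,n}$) from the finite $\log|y|$–part (which pairs with $a_i$), and checking that all interaction and remainder terms stay $o(1)$ after multiplication by $\log\rho_n$; and second --- the genuine difficulty --- the a priori boundedness of $t_{1,n}$, which is unavailable before passing to the limit and hinges on the non-degeneracy $c_\infty\ne0$.
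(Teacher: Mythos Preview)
Your approach is essentially the paper's: both test against $P_\e Z_{0j}$, rescale by (a constant multiple of) $\log\rho_n$---the paper uses $\gamma_0^{-1}$---subtract consecutive identities to obtain $t_{j,n}+t_{j+1,n}=o(1)$, and then read off \eqref{eqmu1}--\eqref{eqmuj} from the $j=1$ identity after the telescoping substitution $t_{i,n}=(-1)^{i-1}t_{1,n}+o(1)$. The non-degeneracy $c_\infty\ne0$ that you flag as the main obstacle is not quite implied by the monotonicity of $\beta_i/\alpha_i$ alone (the alternating-sum bound $0<B<\beta_1/\alpha_1$ does not a priori exclude $B=\tfrac{\beta_1+1}{2(\alpha_1-2)}$), but the paper leaves this point equally implicit in the proof of this claim.
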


\begin{proof}[\dem] Consider tests functions $\gamma_0^{-1} PZ_{0j}$'s and the assumption on $h_n$, $|\log \rho_n|\ \|h_n\|_*=o(1)$. Hence, multiplying equation \eqref{eqPhi0} by $ P_\e Z_{0j}$ and integrating by parts we obtain that
{\small
\begin{equation*}
\begin{split}
& \int_{\Om_\e} hPZ_{0j} 
=-\int_{\Om_\e} \phi |x|^{\al_j-2}e^{w_j}Z_{0j} + \int_{\Om_\e} K\phi  P_\e Z_{0j}
= \int_{\Om_\e} \(K - |x|^{\al_j-2}e^{w_j}\)  P_\e Z_{0j} \phi\\
&+ \int_{\Om_\e} |x|^{\al_j-2} e^{w_j} \( PZ_{0j}-Z_{0j}\) \phi 
= \int_{\Om_\e}\phi \sum_{i=1\atop i\ne j}^m |x|^{\al_i-2}e^{w_i}PZ_{0j} +   \int_{\Om_\e} |x|^{\al_j-2} e^{w_j}\lf[ 1 - \gamma_0 G(x,0) + O(\rho^{\ti \sigma})\rg]\phi
\end{split}
\end{equation*}}
in view of $P_\e Z_{0j}=0$ on $\fr\Om_{\e_n}$ and $ \int_{\Om_\e}\lap\phi P_\e Z_{0j}=\int_{\Om_\e}\phi\; \lap P_\e Z_{0j}=\int_{\Om_\e}\phi\; \lap Z_{0j}$.  Now, multiplying by $\gamma_0^{-1}$ and estimating every integral term we find that $\gamma_0^{-1} \int_{\Om_\e} hPZ_{0j}=O\lf(\, |\log\rho|\,\|h\|_p\rg)=o\lf( 1 \rg),$ 
in view of $PZ_{0j}=O(1)$ and $\gamma_0^{-1}=- \frac{\be_1+1}{4\pi (\al_1-2)} \log\rho + O(1) =O(|\log\rho|)$ in \eqref{gama0}. Next, scaling we obtain that
$$\gamma_0^{-1}\int_{\Om_\e} \phi |x|^{\al_j-2}e^{w_j}=- \frac{\be_1+1}{4\pi (\al_1-2)} \log\rho \int_{\Om_{j,n } }  {2\al_j^2|y|^{\al_j-2}\over (1+|y|^{\al_j})^2 }\Phi_{j,n} \, dy + o(1), $$
in view of
$$\lim \int_{\Om_{j,n } }  {2\al_j^2|y|^{\al_j-2}\over (1+|y|^{\al_j})^2 }\Phi_{j,n} \, dy = a_j\int_{\R^2 }  {2\al_j^2|y|^{\al_j-2}\over (1+|y|^{\al_j})^2 } Y_{0,j} \, dy=0$$
Also, by using \eqref{gama0} and expansion of $P_\e Z_{0j}$, we get that for $i<j$
\begin{equation*}
\begin{split}
&\gamma_0^{-1} \int_{\Om_\e} |x|^{\al_i-2} e^{w_i} \phi   PZ_{0j} =
\int_{\Om_{i,n } }  {2\al_i^2|y|^{\al_i-2}\over (1+|y|^{\al_i})^2 } \Phi_{i,n}\lf[\frac{2\gamma_0^{-1}}{1+(\frac{\de_i|y|}{\de_j})^{\al_j}} +\frac{1}{2\pi}\log(\de_i|y|) -H (\de_i|y|,0)\rg]\, dy \\
&\, +O(\rho^{\ti\sigma}|\log\rho|)
= 2\gamma_0^{-1} \int_{\Om_{i,n} }  \Pi_i\Phi_{i,n}(y)\,dy + \frac{1}{2\pi}\log\de_i\int_{\Om_{i,n} } \Pi_i\Phi_{i,n}(y)\, dy \\
&\, +\frac{1}{2\pi} \int_{\Om_{i,n} } \Pi_i\Phi_{i,n}(y)\log |y| \, dy + o(1),
\end{split}
\end{equation*}
and for $i>j$ we have that
\begin{equation*}
\begin{split}
&\gamma_0^{-1} \int_{\Om_\e} |x|^{\al_i-2} e^{w_i} \phi   PZ_{0j} = 
\int_{\Om_{i,n } }  \Pi_i \Phi_{i,n}\lf[\Big(\frac{\de_j}{\de_i}\Big)^{\al_j}\frac{2\gamma_0^{-1}}{(\frac{\de_j}{\de_i})^{\al_j} + |y|^{\al_j} } +\frac{1}{2\pi}\log(\de_i|y|) -H (\de_i|y|,0)\rg]\, dy \\
&\, +O(\rho^{\ti\sigma}|\log\rho|)
= \frac{1}{2\pi}\log\de_i\int_{\Om_{i,n} } \Pi_i\Phi_{i,n}(y)\, dy  +\frac{1}{2\pi} \int_{\Om_{i,n} } \Pi_i\Phi_{i,n}(y)\log |y| \, dy + o(1),
\end{split}
\end{equation*}
in view of $\big(\frac{\de_j}{\de_i}\big)^{\al_j} \gamma_0^{-1}=o(1)$. Furthermore, it follows that
\begin{equation*}
\begin{split}
\int_{\Om_\e} |x|^{\al_j-2} e^{w_j} G(x,0) \phi
=&\, - \frac{1}{2\pi}\log\de_j\int_{\Om_{j,n} } \Pi_j(y)\Phi_{j,n}(y)\, dy  - \frac{1}{2\pi} \int_{\Om_{j,n} } \Pi_j(y)\Phi_{j,n}(y)\log |y| \, dy + o(1).
\end{split}
\end{equation*}
From the choice of $\de_j$'s in \eqref{dei}, it follows that $\log\de_j=\frac1{\al_j}\big[\log d_j +\be_j\log\rho\big] $ and from previous expansions we deduce that for all $j=2,\dots,m-1$
{\small
\begin{equation*}
\begin{split}
&\gamma_0^{-1} \int_{\Om_\e} h PZ_{0j} =\sum_{i<j}\bigg[\Big(-\frac{\be_1+1}{2\pi(\al_1-2)} + \frac{\be_i}{2\pi \al_i}\Big)\log \rho \int_{\Om_{i,n } }  \Pi_i \Phi_{i,n}\, dy +\frac{1}{2\pi} \int_{\Om_{i,n } }  \Pi_i\Phi_{i,n}(y)  \log |y|\, dy \bigg]\\
&\,+\sum_{i>j}\bigg[ \frac{\be_i}{2\pi\al_i}\log\rho \int_{\Om_{i,n} } \Pi_i \Phi_{i,n}(y)\, dy  +\frac{1}{2\pi} \int_{\Om_{i,n} } \Pi_i\Phi_{i,n}(y)\log |y| \, dy\bigg] -\frac{\be_1+1}{4\pi(\al_1-2)} \log \rho \int_{\Om_{j,n } }  \Pi_j\Phi_{j,n}\, dy\\
&\, + \frac{\be_j}{2\pi \al_j}\log \rho \int_{\Om_{j,n } }  \Pi_j(y) \Phi_{j,n}\, dy +\frac{1}{2\pi} \int_{\Om_{j,n } }  \Pi_j(y)\Phi_{j,n}(y)  \log |y|\, dy+ o(1),
\end{split}
\end{equation*}
}
Since for all $i=1,\dots,m$
\begin{equation}\label{ipy0l}
\lim \int_{\Om_{i,n} } \Pi_i\Phi_{i,n}(y)\log |y| \, dy =a_i\int_{\R^2 } \Pi_iY_{0,i}(y)\log |y| \, dy=-4\pi a_i
\end{equation}
we obtain that for all $i=2,\dots,m-1$
{\small
\begin{equation}\label{j0}
\begin{split}
2\sum_{i=1}^m a_i= &\,\sum_{i<j}\bigg[\Big(-\frac{\be_1+1}{2\pi(\al_1-2)} + \frac{\be_i}{2\pi \al_i}\Big)\log \rho \int_{\Om_{i,n } }  \Pi_i \Phi_{i,n}\, dy \bigg] +\sum_{i>j}\bigg[ \frac{\be_i}{2\pi\al_i}\log\rho \int_{\Om_{i,n} } \Pi_i\Phi_{i,n}(y)\, dy  \bigg] \\
&\,+\Big( -\frac{\be_1+1}{4\pi(\al_1-2)} + \frac{\be_j}{2\pi \al_j}\Big) \log \rho \int_{\Om_{j,n } }  \Pi_j(y) \Phi_{j,n}\, dy + o(1).
\end{split}
\end{equation}
}
Choosing $j\in\{2,\dots,m-2\}$ we rewrite previous expression for $j+1$ as
{\small
\begin{equation}\label{j+1}
\begin{split}
&2\sum_{i=1}^m a_i= \sum_{i<j}\bigg[\Big(-\frac{\be_1+1}{2\pi(\al_1-2)} + \frac{\be_i}{2\pi \al_i}\Big)\log \rho \int_{\Om_{i,n } }  \Pi_i \Phi_{i,n} \bigg]+ \Big(-\frac{\be_1+1}{2\pi(\al_1-2)} + \frac{\be_j}{2\pi \al_j}\Big)\log \rho \int_{\Om_{j,n } }  \Pi_j \Phi_{j,n}  \\
&\,+\sum_{i>j+1}\bigg[ \frac{\be_i}{2\pi\al_i}\log\rho \int_{\Om_{i,n} } \Pi_i \Phi_{i,n}  \bigg] + \Big(-\frac{\be_1+1}{4\pi(\al_1-2)}  + \frac{\be_{j+1} }{2\pi \al_{j+1} } \Big) \log \rho \int_{\Om_{j+1,n } }  \Pi_{j+1} \Phi_{j+1,n} + o(1).
\end{split}
\end{equation}
}
Hence, by subtracting \eqref{j0} from \eqref{j+1} we obtain that
$$0= -\frac{\be_1+1}{4\pi(\al_1-2)} \log \rho \left[\int_{\Om_{j,n } }  \Pi_{j}(y) \Phi_{j,n}\, dy + \int_{\Om_{j+1,n } }  \Pi_{j+1}(y) \Phi_{j+1,n}\, dy\rg] + o(1),$$
so that, for all $j=2,\dots,m-2$
\begin{equation}\label{limj+1mj}
\lim \log \rho_n \left[\int_{\Om_{j,n } }  \Pi_{j}(y) \Phi_{j,n}\, dy + \int_{\Om_{j+1,n } }  \Pi_{j+1}(y) \Phi_{j+1,n}\, dy\rg]=0.
\end{equation}
Similarly as above it extends to $j=1$
\begin{equation*}
\begin{split}
2\sum_{i=1}^m a_i= &\, \sum_{i>1}\bigg[ \frac{\be_i}{2\pi\al_i}\log\rho \int_{\Om_{i,n} } \Pi_i\Phi_{i,n} dy  \bigg] +\Big( -\frac{\be_1+1}{4\pi(\al_1-2)} + \frac{\be_1}{2\pi \al_1}\Big) \log \rho \int_{\Om_{1,n } }  \Pi_1\Phi_{1,n} dy + o(1)
\end{split}
\end{equation*}
and
\begin{equation*}
\lim \log \rho_n \left[\int_{\Om_{1,n } }  \Pi_{1}(y) \Phi_{1,n}\, dy + \int_{\Om_{2,n } }  \Pi_{2}(y) \Phi_{2,n}\, dy\rg]=0;
\end{equation*}
and to $j=m$
\begin{equation*}\label{j}
\begin{split}
2\sum_{i=1}^m a_i= &\,\sum_{i<m}\bigg[\Big(-\frac{\be_1+1}{2\pi(\al_1-2)} + \frac{\be_i}{2\pi \al_i}\Big)\log \rho \int_{\Om_{i,n } }  \Pi_i \Phi_{i,n}\, dy \bigg]\\
&\,+\Big( -\frac{\be_1+1}{4\pi(\al_1-2)} + \frac{\be_m}{2\pi \al_m}\Big) \log \rho \int_{\Om_{m,n } }  \Pi_m(y) \Phi_{m,n}\, dy + o(1)
\end{split}
\end{equation*}
and
\begin{equation*}
\lim \log \rho_n \left[\int_{\Om_{m-1,n } }  \Pi_{m-1}(y) \Phi_{m-1,n}\, dy + \int_{\Om_{m,n } }  \Pi_{m}(y) \Phi_{m,n}\, dy\rg]=0.
\end{equation*}

Now, note that for $j=1$ we deduce that
{\small
\begin{equation*}
\begin{split}
&2\sum_{i=1}^m a_i= \lim \bigg[\sum_{i=2}^m \frac{\be_i}{2\pi\al_i}\log\rho \int_{\Om_{i,n} } \Pi_i\Phi_{i,n} dy +\Big( -\frac{\be_1+1}{4\pi(\al_1-2)} + \frac{\be_1}{2\pi \al_1}\Big) \log \rho \int_{\Om_{1,n } }  \Pi_1 \Phi_{1,n}dy\bigg]\\
&= \lim \bigg(-\frac{\be_1+1}{4\pi(\al_1-2)} + \sum_{i=1}^m(-1)^{i+1}\frac{\be_i}{2\pi \al_i}\bigg) \log \rho \int_{\Om_{1,n } }  \Pi_1 \Phi_{1,n}\, dy
=\bigg(-\frac{\be_1+1}{4\pi(\al_1-2)} + \sum_{i=1}^m(-1)^{i+1}\frac{\be_i}{2\pi \al_i}\bigg) \mu_1,
\end{split}
\end{equation*}
}
and there exists $\mu_1=\lim \log \rho_n \int_{\Om_{1,n } }  \Pi_1(y) \Phi_{1,n}\, dy$. In fact, by using \eqref{limj+1mj} we have that
{\small
\begin{equation*}
\begin{split}
\sum_{i=1}^m &\frac{\be_i}{2\pi\al_i}\log\rho \int_{\Om_{i,n} } \Pi_i\Phi_{i,n}(y)\, dy \\
=&\, \sum_{i=1}^{m-2} \frac{\be_i}{2\pi\al_i}\log\rho \int_{\Om_{i,n} } \Pi_i\Phi_{i,n}(y)\, dy  +\Big( \frac{\be_{m-1} }{2\pi\al_{m-1}} - \frac{\be_m}{2\pi \al_m}\Big) \log \rho \int_{\Om_{m-1,n } }  \Pi_{m-1}(y) \Phi_{m-1,n}\, dy\\
&\, +\frac{\be_m}{2\pi\al_m}\underbrace{\log\rho\bigg( \int_{\Om_{m-1,n } }  \Pi_{m-1}(y) \Phi_{m-1,n}\, dy + \int_{\Om_{m,n } }  \Pi_{m}(y) \Phi_{m,n}\, dy\bigg)}_{=o(1)}\\
= &\, \sum_{i=1}^{m-3} \frac{\be_i}{2\pi\al_i}\log\rho \int_{\Om_{i,n} } \Pi_i\Phi_{i,n}(y)\, dy  +\frac{1}{2\pi}\Big(\frac{\be_{m-2}}{\al_{m-2}}-\frac{\be_{m-1}}{\al_{m-1}} + \frac{\be_m}{\al_m}\Big)\log\rho \int_{\Om_{m-2,n}} \Pi_{m-2}(y)\Phi_{m-2}(y)\, dy\\
&+\Big( \frac{\be_{m-1} }{2\pi\al_{m-1}} - \frac{\be_m}{2\pi \al_m}\Big) \underbrace{ \log \rho \bigg(\int_{\Om_{m-2,n}} \Pi_{m-2}(y)\Phi_{m-2}(y)\, dy + \int_{\Om_{m-1,n } }  \Pi_{m-1}(y) \Phi_{m-1,n}\, dy\bigg)}_{=o(1)} +o(1)\\
& \vdots \\
=&\, \frac{1}{2\pi}\Big(\frac{\be_{1}}{\al_{1}}-\frac{\be_{2}}{\al_{2}} + \frac{\be_{3}}{\al_{3}} +\cdots + (-1)^{m+1} \frac{\be_{m}}{\al_{m}}\Big)\log\rho \int_{\Om_{1,n}} \Pi_{1}(y)\Phi_{1,n}(y)\, dy + o(1).
\end{split}
\end{equation*}
}
Thus, we get \eqref{eqmu1}. From the existence of $\mu_1$ and \eqref{limj+1mj}, it follows the existence of $\mu_2,\dots,\mu_m$ satisfying $\mu_j+\mu_{j+1}=0$ for all $j=1,\dots,m-1$. Therefore, it is readily checked that \eqref{eqmuj} it holds.
\end{proof}

\begin{claim}\label{claim3}
There hold that $a_{j}=0$ for all $j=1,\dots,m$.
\end{claim}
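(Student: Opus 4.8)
The plan is to show that the constants $a_j$ appearing in Claim \ref{claim2} are all zero, and then feed this back into \eqref{npsi} to reach a contradiction with $\|\phi_n\|=1$. The key observation is that Claim \ref{claimmu} gives a linear relation \eqref{eqmu1} between $2\sum_{i=1}^m a_i$ and a single number $\mu_1$, together with $\mu_j=(-1)^{j-1}\mu_1$ for all $j$. So I would first establish that $\mu_1=0$, which forces $\sum a_i=0$ (provided the bracketed coefficient in \eqref{eqmu1} is nonzero, which it is for $\rho$ small since it behaves like a fixed negative constant plus bounded terms). To obtain more than just the sum, the natural move is to test the equation \eqref{eqPhi0} against a different family of functions than the $P_\e Z_{0j}$'s — namely against $P_\e w_j$ itself (or equivalently exploit that $\phi_n\in H^1_0$ is orthogonal, in the appropriate sense, to no functions a priori, so one must manufacture the needed relations).

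The cleanest route I would take: multiply \eqref{eqPhi0} by $P_\e Z_{0j}$ again but now keep careful track of the terms that were discarded as $o(1)$ — in particular the logarithmic terms $\int \Pi_i \Phi_{i,n}\log|y|\,dy$, which by \eqref{ipy0l} converge to $-4\pi a_i$. Running the same subtraction argument that produced \eqref{limj+1mj}, but at the next order (i.e. not multiplying through by $\log\rho$ and instead looking at the $O(1)$ balance), should yield a second system of linear relations among the $a_i$'s. Combined with $\mu_1=0$ (which kills the leading $\log\rho$ contributions), these $O(1)$ relations should be of the form $a_j + a_{j+1}=0$ or similar, which together with $\sum a_i=0$ and a parity/counting argument forces every $a_i=0$. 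Alternatively, and perhaps more robustly, one tests against $P_\e Z_{0j}$ and isolates the coefficient of $\log|y|$: since $\gamma_0^{-1}\int_{\Om_\e}hP_\e Z_{0j}=o(1)$ and all the $\log\rho$-weighted pieces are controlled by the now-known quantities $\mu_i = (-1)^{i-1}\mu_1$, what remains at order $1$ is precisely $\tfrac1{2\pi}\sum_i(\pm)\int\Pi_i\Phi_{i,n}\log|y|\to -2\sum_i(\pm)a_i$, and varying $j$ from $1$ to $m$ gives $m$ independent equations in the $a_i$'s whose only solution is trivial.

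First I would nail down $\mu_1=0$. Since $|\log\rho_n|\,\|h_n\|_p=o(1)$ and $\gamma_0^{-1}=O(|\log\rho|)$, the left side $\gamma_0^{-1}\int h_n P_\e Z_{0j}$ is $o(1)$; the right side, as computed in the proof of Claim \ref{claimmu}, is a fixed linear combination of $\log\rho_n\int\Pi_i\Phi_{i,n}$ plus $O(1)$ terms. For this to be $o(1)$ we need the leading $\log\rho$-order part to vanish, and running the telescoping subtraction across consecutive indices $j$ gives \eqref{limj+1mj}, hence $\mu_j+\mu_{j+1}=0$; but one still needs an \emph{independent} normalization to conclude $\mu_1=0$ rather than merely alternating. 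That normalization comes from \eqref{eqmu1}: the quantity $2\sum a_i$ is finite (the $a_i$ are finite), so $\mu_1$ is finite; but then look again at a single equation, say $j=1$, where the coefficient of $\log\rho\int\Pi_1\Phi_{1,n}$ does \emph{not} telescope away — it equals the bracket in \eqref{eqmu1}. Because the full right-hand side is $o(1)$ while $\log\rho\int\Pi_1\Phi_{1,n}\to\mu_1$ and its coefficient tends to a nonzero constant, we must have $\mu_1=0$, whence $2\sum a_i=0$.

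With $\mu_1=0$ in hand, I revisit the identity $\gamma_0^{-1}\int_{\Om_\e}h_nP_\e Z_{0j}=o(1)$ for each fixed $j\in\{1,\dots,m\}$, now collecting the $O(1)$ terms: all the $\log\rho$-weighted integrals contribute $o(1)$ because $\log\rho\int\Pi_i\Phi_{i,n}\to\mu_i=(-1)^{i-1}\mu_1=0$, so what survives is exactly $\tfrac1{2\pi}\sum_{i}\varepsilon_{ij}\int_{\Om_{i,n}}\Pi_i\Phi_{i,n}\log|y|\,dy + o(1)$, where $\varepsilon_{ij}\in\{0,\pm1\}$ are the signs dictated by whether $i<j$, $i=j$, or $i>j$ in the expansions. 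Passing to the limit via \eqref{ipy0l} turns this into $-2\sum_i\varepsilon_{ij}a_i=0$. The matrix $(\varepsilon_{ij})_{i,j}$ is (up to the structure coming from the $i<j$ versus $i>j$ dichotomy in the proof of Claim \ref{claimmu}) triangular-like with nonzero diagonal, so the homogeneous system $\sum_i\varepsilon_{ij}a_i=0$ for $j=1,\dots,m$ has only the trivial solution, giving $a_j=0$ for all $j$. \textbf{The main obstacle} I anticipate is bookkeeping: correctly identifying which integrals are genuinely $O(1)$ versus $o(1)$ once $\mu_1=0$ is known, and verifying that the resulting linear system in the $a_i$'s is nonsingular — this is exactly the ``bit more complicated analysis'' the introduction warns about, and it hinges on the precise coefficients $-\tfrac{\be_1+1}{4\pi(\al_1-2)}+\tfrac{\be_j}{2\pi\al_j}$ never conspiring to make the system degenerate, which should follow from the strict monotonicity $\tfrac{\be_j}{\al_j}<\tfrac{\be_{j-1}}{\al_{j-1}}$ already established.
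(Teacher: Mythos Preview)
Your proposal has a genuine gap, and it stems from misreading the balance in the $P_\e Z_{0j}$ identity. You argue that ``the full right-hand side is $o(1)$ while $\log\rho\int\Pi_1\Phi_{1,n}\to\mu_1$ and its coefficient tends to a nonzero constant, so $\mu_1=0$.'' But the right-hand side is \emph{not} $o(1)$: it contains, besides the $\log\rho$-weighted integrals, the $\log|y|$ integrals $\frac{1}{2\pi}\int\Pi_i\Phi_{i,n}\log|y|$, which by \eqref{ipy0l} converge to $-2a_i$. So the identity reads $o(1)=(\text{const})\mu_1-2\sum_i a_i+o(1)$, which is exactly \eqref{eqmu1} and says nothing more than that $\mu_1$ and $\sum a_i$ are proportional --- neither is forced to vanish. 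Your step 1 therefore fails, and with it the plan to ``revisit the $P_\e Z_{0j}$ identity at order $O(1)$ once $\mu_1=0$.''

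The second obstruction is more structural: even if you did know $\mu_1=0$, varying $j$ in the $P_\e Z_{0j}$ test does \emph{not} produce $m$ independent equations in the $a_i$'s. The $O(1)$ content of that identity is the same for every $j$ --- the $\log|y|$ terms always sum to $-2\sum_{i=1}^m a_i$ regardless of $j$ (look at \eqref{j0}: the $a_i$'s enter only through the full sum). Subtracting consecutive $j$'s is precisely what gave \eqref{limj+1mj}, and nothing further survives. So the matrix $(\varepsilon_{ij})$ you hope is ``triangular-like with nonzero diagonal'' is in fact rank one.

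What the paper does instead is exactly the alternative you mention and then abandon: it tests against $P_\e w_j$. The point is that $P_\e w_j$ carries the term $-2\log(\de_j^{\al_j}+|x|^{\al_j})$, which after scaling produces the new integral $\int\Pi_j\Phi_{j,n}\log(1+|y|^{\al_j})\to -2\pi\al_j a_j$. This is the missing diagonal entry. Combining these $m$ equations (one per $j$) with \eqref{eqmu1} from Claim~\ref{claimmu} yields an $(m+1)\times(m+1)$ linear system in the unknowns $x_j=\sum_{i\ge j}a_i$ and $\mu_1$; the paper row-reduces its matrix and checks the final $2\times2$ block has determinant $\frac{\al_1}{4\pi}\cdot\frac{\be_1+1}{\al_1-2}>0$, so the system is nonsingular and all $a_j$ and $\mu_1$ vanish simultaneously. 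The conclusion $\mu_1=0$ is an \emph{output} of this system, not an input.
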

\begin{proof}[\dem] To this aim we will use as tests functions $P_\e w_j$ and the assumption on $h_n$, $|\log \rho_n|\ \|h_n\|_*=o(1)$. 
Hence, multiplying equation \eqref{eqPhi0} by $P_\e w_j$ and integrating by parts we obtain that
\begin{equation*}
\begin{split}
\int_{\Om_\e} &hPw_{j}= \int_{\Om_\e}\lap w_{j}\phi + \int_{\Om_\e} K\phi  P_\e w_{j}  
=- \int_{\Om_\e} \phi |x|^{\al_j-2}e^{w_j} +\int_{\Om_\e} K\phi  P_\e w_{j} =- \int_{\Om_\e} \phi |x|^{\al_j-2}e^{w_j} \\
&\, + \sum_{i=1}^m \int_{\Om_\e} |x|^{\al_i-2}e^{w_i} \phi \left[-2\log\big(\de_j^{\al_j}+|x|^{\al_j}\big) + (4\pi \al_j-\gamma_j) H(x,0) + \frac{\gamma_j}{2\pi}\log |x| + O(\rho^{\eta})\right]  
\end{split}
\end{equation*}
in view of $P_\e w_{j}=0$ on $\fr\Om_{\e_n}$ and $ \int_{\Om_\e}\lap\phi P_\e w_{j}=\int_{\Om_\e}\phi\; \lap P_\e w_{j}$. Now, estimating every integral term we find that
$$\int_{\Om_\e} hPw_{j}=O\lf(\, |\log\rho|\,\|h\|_p\rg)=o\lf( 1 \rg),$$
in view of $Pw_{j}=O(|\log\rho|)$ and $G(x,0)=O(|\log\e|)=O(|\log\rho|)$. Next, scaling we obtain that
$$\int_{\Om_\e} \phi |x|^{\al_j-2}e^{w_j}=\int_{\Om_{j,n } }  {2\al_j^2|y|^{\al_j-2}\over (1+|y|^{\al_j})^2 }\Phi_{j,n} =a_j\int_{\R^2 }  {2\al_j^2|y|^{\al_j-2}\over (1+|y|^{\al_j})^2 }Y_{0j} \, dy +o(1)=o(1).$$
For $i=j$ we have that
{\small
\begin{equation*}
\begin{split}
&\int_{\Om_\e}|x|^{\al_j-2}e^{w_j}\phi P_\e w_j= \int_{\Om_{j,n}} \Pi_j\Phi_j\Big[-2\al_j\log \de_j - 2\log(1+|y|^{\al_j} )+  (4\pi \al_j-\gamma_j) H(\de_j y,0)  + \frac{\gamma_j}{2\pi}\log |\de_j y| \\
&\, + O(\rho^{\eta}) \Big] = -2\al_j\log \de_j \int_{\Om_{j,n}} \Pi_j\Phi_j - 2\int_{\Om_{j,n}} \Pi_j\Phi_j \log(1+|y|^{\al_j} ) +  (4\pi \al_j-\gamma_j)\int_{\Om_{j,n}} \Pi_j\Phi_j H(\de_j y,0)  \\
&\,+ \frac{\gamma_j \log \de_j}{2\pi}\int_{\Om_{j,n}} \Pi_j\Phi_j +\frac{\gamma_j}{2\pi}\int_{\Om_{j,n}} \Pi_j\Phi_j \log | y| + o(1),
\end{split}
\end{equation*}
}
similarly, for $i>j$ we obtain that
{\small
\begin{equation*}
\begin{split}
&\int_{\Om_\e}|x|^{\al_i-2}e^{w_i}\phi P_\e w_j= \int_{\Om_{i,n}} \Pi_i\Phi_i\Big[-2\al_j\log \de_i - 2\log\Big(\Big(\frac{\de_j}{\de_i}\Big)^{\al_j} +|y|^{\al_j} \Big)+  (4\pi \al_j-\gamma_j) H(\de_i y,0)  \\
&\,+ \frac{\gamma_j}{2\pi}\log |\de_i y| + O(\rho^{\eta}) \Big] =-2\al_j\log \de_i \int_{\Om_{i,n}} \Pi_i\Phi_i - 2\int_{\Om_{i,n}} \Pi_i\Phi_i \log\Big(\Big(\frac{\de_j}{\de_i}\Big)^{\al_j} +|y|^{\al_j} \Big)   \\
&\,+  (4\pi \al_j-\gamma_j)\int_{\Om_{i,n}} \Pi_i\Phi_i H(\de_i y,0)+ \frac{\gamma_j \log \de_i}{2\pi}\int_{\Om_{i,n}} \Pi_i\Phi_i +\frac{\gamma_j}{2\pi}\int_{\Om_{i,n}} \Pi_i\Phi_i \log | y| + o(1),
\end{split}
\end{equation*}
}
and for $i<j$
{\small
\begin{equation*}
\begin{split}
&\int_{\Om_\e}|x|^{\al_i-2}e^{w_i}\phi P_\e w_j= \int_{\Om_{i,n}} \Pi_i\Phi_i\Big[-2\al_j\log \de_j - 2\log\Big(1+ \Big( \frac{\de_i|y|}{\de_j}\Big)^{\al_j}  \Big)+  (4\pi \al_j-\gamma_j) H(\de_i y,0)  + \frac{\gamma_j}{2\pi}\log |\de_i y| \\
&\, + O(\rho^{\eta}) \Big] =-2\al_j\log \de_j \int_{\Om_{i,n}} \Pi_i\Phi_i - 2\int_{\Om_{i,n}} \Pi_i\Phi_i \log\Big(1+ \Big( \frac{\de_i|y|}{\de_j}\Big)^{\al_j}  \Big) +  (4\pi \al_j-\gamma_j)\int_{\Om_{i,n}} \Pi_i\Phi_i H(\de_i y,0) \\
&\,+ \frac{\gamma_j \log \de_i}{2\pi}\int_{\Om_{i,n}} \Pi_i\Phi_i  +\frac{\gamma_j}{2\pi}\int_{\Om_{i,n}} \Pi_i\Phi_i \log | y| + o(1),
\end{split}
\end{equation*}
}
Now, by using the definition of $\gamma_j$'s in \eqref{repla1} and Lemma \ref{ewfxi} we find that $ \frac{\gamma_j}{2\pi} =2\be_j\frac{\al_1-2}{\be_1+1} + O\big(\frac{1}{|\log\rho|}\big)$ so that from the choice of $\de_j$'s we obtain that $ \frac{\gamma_j}{2\pi}\log\de_i =2\be_j\frac{\al_1-2}{\be_1+1} \frac{\be_i}{\al_i} \log\rho + O(1).$ Hence, from previous expansions we deduce that for all $j=2,\dots,m-1$
{\small
\begin{equation}\label{siphipw}
\begin{split}
o(1)&=\sum_{i<j}\Bigg[\Big(-2\al_j\log \de_j + \frac{\gamma_j \log \de_i}{2\pi} \Big) \int_{\Om_{i,n}} \Pi_i\Phi_i  + \frac{\gamma_j}{2\pi}\int_{\Om_{i,n}} \Pi_i\Phi_i \log | y| \bigg]\\
&\, + \Big(-2\al_j\log \de_j + \frac{\gamma_j \log \de_j}{2\pi} \Big) \int_{\Om_{j,n}} \Pi_j\Phi_j + \frac{\gamma_j}{2\pi}\int_{\Om_{j,n}} \Pi_j\Phi_j \log | y| - 2\int_{\Om_{j,n}} \Pi_j\Phi_j \log (1+|y|^{\al_j})  \\
&\,+\sum_{i>j}\Bigg[\Big(-2\al_j\log \de_i + \frac{\gamma_j \log \de_i}{2\pi} \Big) \int_{\Om_{i,n}} \Pi_i\Phi_i + \frac{\gamma_j}{2\pi}\int_{\Om_{i,n}} \Pi_i\Phi_i \log | y| \\
&\,  - 2\int_{\Om_{i,n}} \Pi_i \Phi_i \log\Big(\Big(\frac{\de_j}{\de_i}\Big)^{\al_j} + |y|^{\al_j}\Big) \bigg]  = - 2\int_{\Om_{j,n}} \Pi_j\Phi_j \log (1+|y|^{\al_j})   \\
&\,+ \sum_{i\le j} \Big(-2\be_j + 2\be_j\frac{\al_1-2}{\be_1+1} \frac{\be_i}{\al_i}\Big) \log\rho \int_{\Om_{i,n} } \Pi_i\Phi_i +\sum_{i=1}^m 2\be_j\frac{\al_1-2}{\be_1+1} \int_{\Om_{i,n} } \Pi_i\Phi_i \log|y|  + o(1) \\
&\,+\sum_{i>j} \bigg[\Big( -2\al_j \frac{\be_i}{\al_i} + 2\be_j \frac{\al_1-2}{\be_1+1} \frac{\be_i}{\al_i}\Big) \log\rho \int_{\Om_{i,n} }\Pi_i\Phi_i - 2\int_{\Om_{i,n } }\Pi_i\Phi_i \log \Big(\Big(\frac{\de_j}{\de_i}\Big)^{\al_j} + |y|^{\al_j}\Big)\bigg].
\end{split}
\end{equation}
}
Since for all $j=1,\dots,m$
\begin{equation}\label{ipy0l1}
\lim \int_{\Om_{j,n}} \Pi_j\Phi_j \log (1+|y|^{\al_j}) = a_j\int_{ \R^2} \Pi_j Y_{0j} \log (1+|y|^{\al_j})= -2\pi \al_j a_j,
\end{equation}
by using \eqref{muj} 
$$\lim\int_{\Om_{i,n } }\Pi_i\Phi_i \log \Big(\Big(\frac{\de_j}{\de_i}\Big)^{\al_j
}+ |y|^{\al_j}\Big)=\al_j a_i\int_{\R^2} \Pi_i Y_{0i}\log |y|=-4\pi\al_j a_i,$$
in view of $\frac{\de_j}{\de_i}=o(1)$, and \eqref{ipy0l}, as $n\to+\infty$ we get that
{\small
\begin{equation*}
\begin{split}
0=& 4\pi \al_j a_j + \sum_{i\le j} \Big(-2\be_j + 2\be_j\frac{\al_1-2}{\be_1+1} \frac{\be_i}{\al_i}\Big) \mu_i +\sum_{i=1}^m 2\be_j\frac{\al_1-2}{\be_1+1} (-4\pi a_i) \\
&\,+\sum_{i>j} \bigg[\Big( -2\al_j \frac{\be_i}{\al_i} + 2\be_j \frac{\al_1-2}{\be_1+1} \frac{\be_i}{\al_i}\Big) \mu_i- 2 (-4\pi \al_j a_i)\bigg]
\end{split}
\end{equation*}
}
so that
{\small
\begin{equation*}
\begin{split}
0=&\, 4\pi \al_j a_j + \sum_{i\le j} \Big(-2\be_j + 2\be_j\frac{\al_1-2}{\be_1+1} \frac{\be_i}{\al_i}\Big) \mu_i - 8\pi \be_j\frac{\al_1-2}{\be_1+1}\sum_{i=1}^m  a_i \\
&\,+\sum_{i>j} \Big( -2\al_j \frac{\be_i}{\al_i} + 2\be_j \frac{\al_1-2}{\be_1+1} \frac{\be_i}{\al_i}\Big) \mu_i + 8\pi \al_j \sum_{i>j}a_i .
\end{split}
\end{equation*}
}
It is readily checked that $ 8\pi \al_j \sum_{i>j}a_i =4\pi \al_j \sum_{i>j}a_i  + 4\pi \al_j \sum_{i\ge j+1}a_i, $ so we rewrite
{\small
\begin{equation*}
\begin{split}
&0= 4\pi \al_j \sum_{i\ge j} a_i + 4\pi \al_j \sum_{i\ge j+1}a_i   - 8\pi \be_j\frac{\al_1-2}{\be_1+1}\sum_{i=1}^m  a_i  -2\be_j \mu_1\sum_{i\le j} (-1)^{i+1}  -2\al_j \mu_1 \sum_{i>j}(-1)^{i+1} \frac{\be_i}{\al_i} \\
&\,+ 2\be_j \frac{\al_1-2}{\be_1+1} \mu_1\sum_{i=1}^m   (-1)^{i+1} \frac{\be_i}{\al_i}
= 4\pi \al_j \sum_{i\ge j} a_i + 4\pi \al_j \sum_{i\ge j+1}a_i   - 8\pi \be_j\frac{\al_1-2}{\be_1+1}\sum_{i=1}^m  a_i  \\
&\, + \mu_1\bigg[-2\be_j \frac{1+(-1)^{j+1}}{2}-2\al_j  \sum_{i>j}(-1)^{i+1} \frac{\be_i}{\al_i} + 2\be_j \frac{\al_1-2}{\be_1+1} \sum_{i=1}^m   (-1)^{i+1} \frac{\be_i}{\al_i} \bigg],
\end{split}
\end{equation*}
}
in view of \eqref{eqmuj}. For simplicity we shall denote for $j=1,\dots,m$
$$x_j=\sum_{i=j}^m a_i,\qquad A_j=-\frac{\be_j}{2\pi} \frac{1+(-1)^{j+1}}{2}-\frac{\al_j}{2\pi}  \sum_{i>j}(-1)^{i+1} \frac{\be_i}{\al_i} + \frac{\be_j}{2\pi} \frac{\al_1-2}{\be_1+1} B,$$
$$\text{and}\quad A_{m+1}=-\frac{1}{2\pi}\Big(- \frac{\be_1+1}{\al_1-2} + B\Big),\quad \text{with}\quad B=\sum_{i=1}^m   (-1)^{i+1} \frac{\be_i}{\al_i}.$$
Hence, taking into account \eqref{eqmu1} and dividing by $4\pi$, we have the following linear system in $m+1$ variables $X=[x_2\ x_3 \ \dots \ x_m \ x_1\ \mu_1]^T$ 
{\small
\begin{equation*}
\begin{split}
0=&\, \al_j x_j + \al_j x_{j+1}   -  2 \be_j\frac{\al_1-2}{\be_1+1}x_1 +  A_j \mu_1, \quad j=2,\dots,m-1 \\
0=&\,  \al_m x_m   - 2\be_j\frac{\al_1-2}{\be_1+1}x_1 + A_m \mu_m\\
0=&\,  \al_1 x_{2}  +\Big( \al_1-  2 \be_1\frac{\al_1-2}{\be_1+1}\Big)x_1 +  A_1 \mu_1 \\
0=&\, 2x_1+A_{m+1}\mu_1
\end{split}
\end{equation*}
}
which in matrix form becomes $A X=\vec 0$ with
{\small
$$A=\left[\begin{array}{ccccccc}
\al_2&\al_2&0&\cdots &0& -2\be_2\frac{\al_1-2}{\be_1+1}&A_2\\
0&\al_3&\al_3&\cdots &0& -2\be_3\frac{\al_1-2}{\be_1+1}&A_3\\
0&0&\al_4&\cdots & 0&-2\be_4\frac{\al_1-2}{\be_1+1}&A_4\\
\vdots&\vdots&\vdots&\ddots&\vdots&\vdots&\vdots\\
0&0&0 & \cdots&\al_m&-2\be_m\frac{\al_1-2}{\be_1+1}&A_m\\
\al_1&0&0&\cdots&0& \al_1-  2 \be_1\frac{\al_1-2}{\be_1+1} & A_1\\
0&0&0&\cdots&0&2&A_{m+1}
\end{array} \right]
= \left[\begin{array}{c}
f_1 \\
f_2\\
\vdots\\
f_m\\
f_{m+1}
\end{array} \right]
$$
}
Operating with rows $f_j$'s of $A$ in the following form
$$f_m \to f_m + (-1)^{j} \frac{\al_1}{\al_{j+1}} f_j,\qquad j=1,\dots,m-1$$ 
we are lead to consider the matrix
$$\left[\begin{array}{ccccccc}
\al_2&\al_2&0&\cdots &0& -2\be_2\frac{\al_1-2}{\be_1+1}&A_2\\
0&\al_3&\al_3&\cdots &0& -2\be_3\frac{\al_1-2}{\be_1+1}&A_3\\
0&0&\al_4&\cdots & 0&-2\be_4\frac{\al_1-2}{\be_1+1}&A_4\\
\vdots&\vdots&\vdots&\ddots&\vdots&\vdots&\vdots\\
0&0&0 & \cdots&\al_m&-2\be_m\frac{\al_1-2}{\be_1+1}&A_m\\
0&0&0&\cdots&0& \al_1-  2 \al_1B\frac{\al_1-2}{\be_1+1} & \frac{\al_1}{2\pi}\Big(-1 + \frac{\al_1-2}{\be_1+1}B\Big)\\
0&0&0&\cdots&0&2&A_{m+1}
\end{array} \right]
$$
since
$$\al_1-  2 \be_1\frac{\al_1-2}{\be_1+1}+\sum_{j=1}^{m-1}(-1)^{j}\frac{\al_1}{\al_{j+1}} (-2\be_{j+1})\frac{\al_1-2}{\be_1+1}=\al_1-  2 \al_1B\frac{\al_1-2}{\be_1+1}$$
and
{\small
\begin{equation*}
\begin{split}
&A_1+\sum_{j=1}^{m-1} (-1)^{j}\frac{\al_1}{\al_{j+1}} A_{j+1}=-\frac{\al_1}{2\pi}\bigg(\underbrace{\frac{\be_1}{\al_1} + \sum_{i=2}^m (-1)^{i+1} \frac{\be_i}{\al_i}}_{=B} \bigg)  + \underbrace{\frac{\be_1}{2\pi} \frac{\al_1-2}{\be_1+1} B + \frac{\al_1}{2\pi} \frac{\al_1-2}{\be_1+1} B \sum_{j=1}^{m-1} (-1)^{j}\frac{\be_{j+1}}{\al_{j+1} } }_{=\frac{\al_1}{2\pi} \frac{\al_1-2}{\be_1+1}B^2}\\
&\,+\frac{\al_1}{2\pi} \underbrace{\sum_{j=1}^{m-1} (-1)^{j} \bigg [-\frac{\be_{j+1}}{\al_{j+1} } \frac{1+(-1)^{j}}{2} +  \sum_{i>j+1}(-1)^{i} \frac{\be_i}{\al_i} \bigg]}_{=0}
= \frac{\al_1}{2\pi}  B\Big(-1+\frac{\al_1-2}{\be_1+1} B\Big).
\end{split}
\end{equation*}
}
Notice that $A$ is invertible, since
$$\text{det}\left[ \begin{array}{cc}
\al_1-  2 \al_1B\frac{\al_1-2}{\be_1+1} & \frac{\al_1}{2\pi}\Big(-1 + \frac{\al_1-2}{\be_1+1}B\Big)\\
2&A_{m+1}
\end{array} \right]=\frac{\al_1}{4\pi}\cdot\frac{\be_1+1}{\al_1-2}>0. $$
Therefore, $x_2=x_3= \dots = x_m= x_1= \mu_1=0$, which implies our claim.
\end{proof}

Now, by using Claims \ref{claim2}-\ref{claim3} we deduce that $\Phi_{j,n}$ converges to zero weakly in $H_{\alpha_j}(\R^2)$ and strongly in $L_{\al_j}(\R^2)$ as $n\to+\infty$. Thus, we arrive at a contradiction with \eqref{npsi} and it follows the priori estimate $\|\phi\|\le C|\log \rho | \, \|h\|_p$. It only remains to prove the solvability assertion. As usual, expressing \eqref{plco} in weak form, with the aid of Riesz's representation theorem and Fredholm's alternative, the existence of a unique solution follows from the a priori estimate
\eqref{est}. This finishes the proof of Proposition \ref{p2}.
\end{proof}


\begin{proof}[\bf Proof of the Proposition \ref{elle}]
Arguing as above, it is enough to prove the a-priori estimate \eqref{estphi}. Let us assume by contradiction the existence of $p>1$, sequences $\rho_n\to0$, functions $h_n\in L^p(\Om_{\e_n})$, $\phi_n\in W^{2,2}(\Om_{\e_n})$ such that
\begin{equation}\label{eqphin}
\mathcal L(\phi_n)=h_n\ \ \text{in}\ \ \Om_{\e_n},\ \ \phi_n=0\ \ \text{on}\ \ \fr\Om_{\e_n},
\end{equation}
with $\|\phi_n\|=1$ and $|\log\rho_n|\ \|h_n\|_p=o(1)$ as $n\to+\infty$. We will omit the subscript $n$ in $\de_{i,n} =\de_i$. Recall that $\de_i^{\al_i}=d_{i,n}\rho_n^{\be_i}$. 
Now, define $\Phi_{i,n}(y):=\phi_{n}(\de_i y)$ for $y\in\Om_{i,n}:=\de_i^{-1}\Om_{\e_n}$, $i=1,\dots,m$ and extend $\phi_n=0$ in $\R^2\sm\Om_{\e_n}$. Arguing in the same way as in \cite[Claim 1, section 4]{EFP}, we can prove the following fact. We provide a sketch of the proof.

\begin{claim}\label{5.4}
The sequence $\{\Phi_{i,n}\}_n$ converges (up to a subsequence) to $\Phi_i^*$ weakly in $H_{\al_i}(\R^2)$ and strongly in $L_{\al_i}(\R^2)$.
\end{claim}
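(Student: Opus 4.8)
The plan is to run the scheme of the proof of Claim~\ref{claim2}, the only genuinely new point being the presence of the nonlocal averages $c_{j,n}:=-\ti c_{j}(\phi_n)=\frac{1}{\la_j\tau^{2j}}\int_{\Om_{\e_n}}K_j\phi_n$, $j=0,1$, in the operator $\ml L$. Since the Dirichlet integral is invariant under rescaling in dimension two, $\int_{\Om_{i,n}}|\nabla\Phi_{i,n}|^2=\int_{\Om_{\e_n}}|\nabla\phi_n|^2=\|\phi_n\|^2=1$ for every $i=1,\dots,m$, so $\{\Phi_{i,n}\}_n$ is bounded in $H^1_{\mathrm{loc}}(\R^2)$ and, up to a subsequence, converges weakly in $H^1_{\mathrm{loc}}$ and strongly in $L^q_{\mathrm{loc}}$ for every $q\ge1$. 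Everything therefore reduces to the uniform bound $\|\Phi_{i,n}\|_{L_{\al_i}}\le M$: given this, $\{\Phi_{i,n}\}_n$ is bounded in the Hilbert space $H_{\al_i}(\R^2)$, hence admits a weakly convergent subsequence, and the compact embedding $H_{\al_i}(\R^2)\hookrightarrow L_{\al_i}(\R^2)$ (see \cite{GP}) turns this into strong convergence in $L_{\al_i}(\R^2)$, with limit $\Phi_i^*$.

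To reach the bound, I would first rewrite $\ml L(\phi_n)=h_n$ as $\lap\phi_n+K\phi_n=h_n+c_{0,n}K_0+c_{1,n}K_1$, where $K=K_0+K_1=\sum_{i=1}^m|x|^{\al_i-2}e^{w_i}$; the change of variables $x=\de_iy$ then yields
\begin{equation*}
\lap\Phi_{i,n}+\de_i^2K(\de_iy)\Phi_{i,n}=\de_i^2h_n(\de_iy)+c_{0,n}\,\de_i^2K_0(\de_iy)+c_{1,n}\,\de_i^2K_1(\de_iy)\quad\text{in }\Om_{i,n}.
\end{equation*}
Multiplying $\ml L(\phi_n)=h_n$ by $\phi_n$, integrating by parts, and using $|\int_{\Om_{\e_n}}h_n\phi_n|\le\|h_n\|_p\|\phi_n\|_{p'}=o(1)$, one obtains the energy identity
\begin{equation*}
\sum_{i=1}^m2\al_i^2\|\Phi_{i,n}\|_{L_{\al_i}}^2=\int_{\Om_{\e_n}}K\phi_n^2=1+\la_0c_{0,n}^2+\la_1\tau^2c_{1,n}^2+o(1),
\end{equation*}
the first equality being the change of variables $x=\de_iy$ performed term by term. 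Since $\al_i>2$, the $L_{\al_i}$ norms are controlled once $c_{0,n}$ and $c_{1,n}$ are known to be bounded.

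The main obstacle — and the point where, as stressed in the Introduction, $\ml L$ demands more care than $L$ — is exactly the boundedness of $c_{0,n},c_{1,n}$. I would settle it as in \cite[Claim 1, Section 4]{EFP}: testing the equation also against $\gamma_0^{-1}P_\e Z_{0i}$, with $\gamma_0$ as in \eqref{gama0} (so that $\gamma_0^{-1}=O(|\log\rho|)$ and $P_\e Z_{0i}=O(1)$), removes the averages up to $o(1)$ because of the hypothesis $|\log\rho_n|\,\|h_n\|_p=o(1)$ — this is precisely the device that below ``gets rid of the presence of $\ti c_j(\phi)$'' in the proofs of Claims~\ref{claimmu}--\ref{claim3}. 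Concretely, were $\max\{|c_{0,n}|,|c_{1,n}|\}\to+\infty$, one would normalize $\phi_n$ by this maximum and pass to the limit, on compact subsets of $\R^2\sm\{0\}$, in the displayed rescaled equation — using $\de_i^2K(\de_iy)\to\Pi_i$ together with $\de_i^2K_0(\de_iy)\to\Pi_i$ if $i$ is odd and $\to0$ if $i$ is even, and symmetrically for $K_1$ (compare \eqref{dik}) — to force the limiting averages $\bar c_0,\bar c_1$ to both vanish, contradicting their normalization (here $m\ge2$ is used, so that both an odd and an even index are available).

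Once $c_{0,n}$ and $c_{1,n}$ are bounded, the energy identity above gives $\sum_i2\al_i^2\|\Phi_{i,n}\|_{L_{\al_i}}^2=O(1)$, hence the uniform bound $\|\Phi_{i,n}\|_{L_{\al_i}}\le M$, and the compactness argument of the first paragraph concludes the proof.
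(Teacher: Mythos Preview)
Your proof is essentially correct and follows the same overall scheme as the paper: rescale, derive the energy identity
\[
\sum_{i=1}^m 2\al_i^2\|\Phi_{i,n}\|_{L_{\al_i}}^2 = 1 + \la_0\,\ti c_{0,n}^{\,2} + \la_1\tau^2\,\ti c_{1,n}^{\,2} + o(1),
\]
and then reduce everything to the boundedness of the two averages $\ti c_{0,n},\ti c_{1,n}$, after which the compact embedding $H_{\al_i}\hookrightarrow L_{\al_i}$ finishes the job.

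The one substantive difference is in how you secure $\ti c_{j,n}=O(1)$. Your normalization/contradiction argument (divide by $M_n=\max_j|\ti c_{j,n}|$, rescale at an odd level to kill $\bar c_0$ and at an even level to kill $\bar c_1$, contradicting $\max\{|\bar c_0|,|\bar c_1|\}=1$) is valid and, in spirit, the same idea as the paper. The paper, however, does it more directly and without invoking $\gamma_0^{-1}P_\e Z_{0i}$ at all: it multiplies the rescaled equation \eqref{eqPhi1} by a fixed $\chi\in C_c^\infty(\R^2)$ with $\int_{\R^2}\Pi_k\chi\ne 0$, once for an odd index $i$ and once for an even index $j$; using \eqref{dik0}--\eqref{dik1} this produces a $2\times 2$ linear system in $(\ti c_{0,n},\ti c_{1,n})$ that is diagonally dominant with $O(1)$ right-hand side, hence $\ti c_{j,n}=O(1)$ immediately. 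Your mention of the test functions $\gamma_0^{-1}P_\e Z_{0i}$ is a red herring at this stage: in the paper those are used only in the \emph{subsequent} claims, to extract the refined relations \eqref{mujmff}--\eqref{tclaj2sai} once boundedness of the $\ti c_{j,n}$ is already in hand; they are neither needed nor natural for the boundedness step itself, and the sentence ``removes the averages up to $o(1)$'' does not accurately describe what testing against them yields. Replacing that line by the compactly supported $\chi$ argument (or keeping your normalization argument and dropping the $P_\e Z_{0i}$ reference) makes the proof clean and aligned with the paper.
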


\begin{proof}[\dem]
First, we shall show that the sequence $\{\Phi_{i,n}\}_n$ is bounded in $H_{\al_i}(\R^2)$. Notice that \linebreak $\|\Phi_{i,n}\|_{H^1_0(\Omega_{i,n})}=1$ for $i=1,\dots,m$. Thus, we want to prove that there is a constant $M>0$ such for all $n$ (up to a subsequence) $\|\Phi_{i,n}\|_{L_{\al_i}}^2 \le M.$ Notice that for any $i\in\{1,\dots,m\}$ we find that
 in $\Om_{i,n}$
\begin{equation}\label{eqPhi1}
\begin{split}
\lap \Phi_{i,n}&+\de_i^2K_0(\de_i y) \lf(\Phi_{i,n} +\ti c_{0,n} \rg)+\de_i^2K_1(\de_i y) \lf(\Phi_{i,n} +\ti c_{1,n} \rg)=\de_i^2h_n(\de_i y), 
\end{split}
\end{equation}
where for simplicity we denote $\ti c_{j,n}=\ti c_{j}(\phi_n)$, with $\ti c_j$ given by \eqref{ctjphi}. Furthermore, it follows that $\Phi_{i,n}\to\Phi_i^*$ weakly in $H^1_0(\Omega_{i,n})$ and strongly in $L^p(K)$ for any $K$   compact sets in $\mathbb R^2$.
Now, let $\chi$ a smooth function with compact support in $\mathbb R^2.$ We multiply \eqref{eqPhi1} by $\chi$, integrate by parts and we obtain that 
\begin{equation*}
\begin{split}
-\int_{\Omega_{i,n} }&\nabla \Phi_{i,n}\nabla \chi+\int_{\Omega_{i,n} } \bigg[\ds{2\al_i^2|y|^{\al_i-2}\over (1+|y|^{\al_i})^2}+o(1)\bigg]\Phi_{i,n}\chi+\ti c_{0,n} \int_{\Omega_{i,n} }\bigg[\ds{2\al_i^2|y|^{\al_i-2}\over (1+|y|^{\al_i})^2}+o(1)\bigg]\chi\\
&+\int_{\Om_{i,n}} o(1) \Phi_{i,n}\chi  + \ti c_{1,n} \int_{\Om_{i,n}} o(1) \chi = \int_{\Omega_{i,n} }\de_i^2h_n(\de_i y) \chi 
\end{split}
\end{equation*}
for $i$ odd and 
\begin{equation*}
\begin{split}
-\int_{\Omega_{i,n} }&\nabla \Phi_{i,n}\nabla \chi +\int_{\Om_{i,n}} o(1)\Phi_{i,n}\chi +\ti c_{0,n}  \int_{\Om_{i,n}} o(1) \chi +\int_{\Omega_{i,n} } \bigg[\ds{2\al_i^2|y|^{\al_i-2}\over (1+|y|^{\al_i})^2}+o(1)\bigg]\Phi_{i,n}\chi \\
& + \ti c_{1,n} \int_{\Omega_{i,n} }\bigg[\ds{2\al_i^2|y|^{\al_i-2}\over (1+|y|^{\al_i})^2}+o(1)\bigg]\chi  
= \int_{\Omega_{i,n} }\de_i^2h_n(\de_i y) \chi 
\end{split}
\end{equation*}
for $i$ even, in view of
{\small
\begin{equation}\label{dik0}
\de_i^2K_0(\de_i y)=
\begin{cases}
\ds{2\al_i^2|y|^{\al_i-2}\over (1+|y|^{\al_i})^2}+O\bigg(\sum_{j<i \atop j \text{ odd} }\Big({\de_j\over \de_i}\Big)^{\al_j} + \sum_{i<j\atop j \text{ odd} } \Big({\de_i\over \de_j}\Big)^{\al_j}\bigg)& \text{if $i$ is odd}\\[0.6cm]
\ds O\bigg(\sum_{j<i \atop j \text{ odd} }\Big({\de_j\over \de_i}\Big)^{\al_j} + \sum_{i<j\atop j \text{ odd} } \Big({\de_i\over \de_j}\Big)^{\al_j}\bigg)&\text{ if $i$ is even}
\end{cases}
\end{equation}
}
and
{\small
\begin{equation}\label{dik1}
\de_i^2K_1(\de_i y)=
\begin{cases}
\ds O\bigg(\sum_{j<i \atop j \text{ even} }\Big({\de_j\over \de_i}\Big)^{\al_j} + \sum_{i<j\atop j \text{ even} } \Big({\de_i\over \de_j}\Big)^{\al_j}\bigg)&\text{ if $i$ is odd}\\[0.6cm]
\ds{2\al_i^2|y|^{\al_i-2}\over (1+|y|^{\al_i})^2}+O\bigg(\sum_{j<i \atop j \text{ even} }\Big({\de_j\over \de_i}\Big)^{\al_j} + \sum_{i<j\atop j \text{ even} } \Big({\de_i\over \de_j}\Big)^{\al_j}\bigg)& \text{if $i$ is even}
\end{cases}
\end{equation}
}
\hspace{-0.15cm}uniformly on compact subsets of $\R^2\sm \{0\}$. We re-write the system for $\ti c_{0,n}$ and $\ti c_{1,n}$ as a diagonal dominant one as $n\to+\infty$
{\small
\begin{equation*}
\begin{split}
\ti c_{0,n} \int_{\Omega_{i,n} }\bigg[{2\al_i^2|y|^{\al_i-2}\over (1+|y|^{\al_i})^2} +o(1)\bigg] \chi+o(1)\ti c_{1,n} =&\,O(1)\\[0.4cm]
o(1)\ \ti c_{0,n}+\ti c_{1,n}\int_{\Omega_{j,n} } \bigg[{2\al_j^2|y|^{\al_j-2}\over (1+|y|^{\al_j})^2} +o(1)\bigg]  \chi=&\, O(1),
\end{split}
\end{equation*}
}
\hspace{-0.15cm}choosing $i $ odd and $j$ even. Thus, if we choose $\chi$ so that $ \int_{\mathbb R^2}{2\al_k^2|y|^{\al_k-2}\over (1+|y|^{\al_k})^2}  \chi\ dy\not=0$ for $k=i,j$ then we obtain that $ \ti c_{i,n}=O(1)$,  for $i=0,1$. Now, we multiply \eqref{eqPhi1} by $\Phi_{i,n}$ for any $i\in\{1,\dots,m\}$, integrate by parts and we get 
\begin{equation}\label{snpsi}
\sum_{i=1}^{m} 2\al_i^2\| \Phi_{i,n}\|_{L_{\al_i}}^2
=\,1 + \la_0 (\ti c_{0,n})^2 + \la_1\tau^2 (\ti c_{1,n})^2 +o(1).
\end{equation}
Therefore, the sequence $\{\Phi_{i,n}\}_n$ is bounded in $H_{\al_i}(\R^2)$, so that there is a subsequence $\{\Phi_{i,n}\}_n$ and functions $\Phi_i^*$, $i=1,\dots,m$ such that $\{\Phi_{i,n}\}_n$ converges to $\Phi_i^*$ weakly in $H_{\al_i}(\R^2)$ and strongly in $L_{\al_i}(\R^2)$. That proves our claim.
\end{proof}

Define the sequences $\psi_{i,n}=\phi_n+\ti c_{i,n}$, $i=0,1$. Notice that clearly
\begin{equation}\label{eqpsi}
\lap \psi_{i,n}+K_0 \psi_{1,n}+K_1\psi_{2,n}=h_n\quad\text{in}\ \ \oen,\qquad i=0,1.
\end{equation}
Now, define $\Psi_{i,j,n}(y):=\psi_{i,n}(\de_j y)$ for $y\in \Om_{j,n}$, $i=0,1$ and $j=1,\dots,m$. Note that $\Psi_{i,j,n}=\Phi_{j,n}+\ti c_{i,n}$. Thus, we can prove the following fact.

\begin{claim}\label{claim2mff} $\Psi_{0,j,n} \to a_jY_{0j}$ for $j$ odd  and $\Psi_{1,j,n} \to a_jY_{0j}$ for $j$ even, weakly in $H_{\al_j}(\R^2)$ and strongly in $L_{\al_j}(\R^2)$ as $n\to+\infty$ for some constant $a_{j}\in\R$, $j=1,\dots,m$.
\end{claim}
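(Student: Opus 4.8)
The statement to prove is Claim~\ref{claim2mff}: that the rescaled functions $\Psi_{0,j,n}$ (for $j$ odd) and $\Psi_{1,j,n}$ (for $j$ even) converge, weakly in $H_{\al_j}(\R^2)$ and strongly in $L_{\al_j}(\R^2)$, to a multiple $a_jY_{0j}$ of the radial kernel element. The plan is to reduce this to Claim~\ref{5.4} via the relation $\Psi_{i,j,n}=\Phi_{j,n}+\ti c_{i,n}$, identify the limiting equation satisfied by the limit $\Phi_j^*$, and then invoke the nondegeneracy classification already recalled in the excerpt (multiples of $Y_{0j}$ are the only finite-energy solutions of $L_j(\phi)=0$ when $\al_j\notin 2\N$, using \cite[Theorem A.1]{AP}).

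First I would record that by Claim~\ref{5.4} the sequence $\Phi_{j,n}$ is bounded in $H_{\al_j}(\R^2)$ and (along the chosen subsequence) converges weakly in $H_{\al_j}$ and strongly in $L_{\al_j}$ to some $\Phi_j^*$; moreover, as established in the proof of Claim~\ref{5.4}, the constants satisfy $\ti c_{i,n}=O(1)$, so passing to a further subsequence we may assume $\ti c_{i,n}\to \ti c_i^*\in\R$ for $i=0,1$. Consequently $\Psi_{i,j,n}=\Phi_{j,n}+\ti c_{i,n}\to \Phi_j^*+\ti c_i^*$ with the same modes of convergence (adding a constant does not affect the $L_{\al_j}$ convergence since $|y|^{\al_j-2}(1+|y|^{\al_j})^{-2}$ is integrable, and the gradient is unchanged). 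Next I would pass to the limit in the weak formulation of the rescaled equation \eqref{eqPhi1}: testing against $\chi\in C_c^\infty(\R^2)$ and using the local uniform expansions \eqref{dik0}--\eqref{dik1}, together with $\de_i^2 h_n(\de_i y)\to 0$ in $L^p_{loc}$ (which follows from $\|h_n\|_p\to 0$ after scaling, exactly as in the proof of the $a$-priori estimate), the cross terms involving $K_{1-\nu(j)}$ and the constants $\ti c_{i,n}$ drop out because the relevant rescaled potential tends to $0$ locally uniformly on $\R^2\sm\{0\}$ while only the potential $\Pi_j(y)=\frac{2\al_j^2|y|^{\al_j-2}}{(1+|y|^{\al_j})^2}$ survives in the annulus $A_j/\de_j$. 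This yields that the limit — call it $\Phi_j^*$ for the surviving channel, i.e.\ $i=0$ if $j$ odd, $i=1$ if $j$ even — solves $\lap\Phi_j^*+\Pi_j\,\Phi_j^*=0$ in $\R^2\sm\{0\}$, with finite Dirichlet energy.

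Then I would remove the puncture: a finite-energy $W^{1,2}_{loc}$ solution on $\R^2\sm\{0\}$ of an equation with an $L^p_{loc}$ potential extends across the isolated point (capacity of a point is zero in $\R^2$ for $H^1$), so $\Phi_j^*\in H_{\al_j}(\R^2)$ solves $L_j(\Phi_j^*)=0$ on all of $\R^2$. By the classification recalled before the statement (here is where we crucially use $\al_j\notin 2\N$ for all $j$, guaranteed by \eqref{ali0} and the lemma following it, together with \cite[Theorem A.1]{AP}), the only such solution is $\Phi_j^*=a_j Y_{0j}$ for some $a_j\in\R$. Finally, I would note that the \emph{other} channel limit plays no role: for $j$ odd the function $\Psi_{1,j,n}$ converges to a constant (its $K_1$-weighted rescaling vanishes locally), but the claim only asserts convergence of $\Psi_{0,j,n}$ for $j$ odd, so nothing further is needed; symmetrically for $j$ even.

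\textbf{Main obstacle.} The delicate point — and the reason the excerpt flags the linear theory as harder than in \cite{AP,EFP,GP,pr2} — is the bookkeeping of the two nonlocal constants $\ti c_{0,n},\ti c_{1,n}$ and of the multi-scale structure: one must check that, in each annulus $A_j/\de_j$, exactly one of the two potentials $\de_i^2K_0(\de_i\cdot)$, $\de_i^2K_1(\de_i\cdot)$ concentrates to $\Pi_j$ while the other, together with the constant terms $\ti c_{i,n}$ times a vanishing potential, contributes only $o(1)$ locally; this is precisely the content of \eqref{dik0}--\eqref{dik1} and the boundedness $\ti c_{i,n}=O(1)$ from Claim~\ref{5.4}. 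Once that separation is in hand the passage to the limit is routine, and the nondegeneracy input finishes the argument; the subsequent Claims (the analogues of \eqref{muj}, \eqref{eqmu1}, \eqref{eqmuj}, and $a_j=0$) are where the constants genuinely re-enter and must be pinned down via the test functions $P_\e Z_{0j}$ and $P_\e w_j$, exactly as in the Liouville-form case.
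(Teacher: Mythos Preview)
Your argument is correct and matches the paper's approach: boundedness of $\Psi_{i,j,n}$ in $H_{\al_j}$ follows from Claim~\ref{5.4} together with $\ti c_{i,n}=O(1)$, the limit equation is obtained via \eqref{dik0}--\eqref{dik1} (only $\Pi_j$ survives at scale $\de_j$ while the other potential and the right-hand side vanish), the puncture at the origin is removed by the standard capacity argument, and the classification \cite[Theorem~A.1]{AP} with $\al_j\notin 2\N$ forces the limit to be $a_jY_{0j}$. One minor slip in your last paragraph: for $j$ odd, $\Psi_{1,j,n}$ does not converge to a constant but to $a_jY_{0j}+\ti c_1-\ti c_0$ (since $\Psi_{1,j,n}=\Psi_{0,j,n}+\ti c_{1,n}-\ti c_{0,n}$); as you correctly observe, this plays no role in the claim.
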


\begin{proof}[\dem]
From the previous computations, it is clear that in $\Om_{j,n}$
$$\lap \Psi_{0,j,n}+\de_j^2K_0(\de_j y ) \Psi_{0,j,n}+\de_j^2K_1( \de_j y ) \lf(\Psi_{0,j,n}-\ti c_{0,n}+\ti c_{1,n}\rg)=\de_j^2h_n(\de_j y)$$
and
$$\lap \Psi_{1,j,n}+\de_j^2K_0( \de_j y ) \lf(\Psi_{1,n}-\ti c_{1,n}+\ti c_{0,n}\rg) + \de_j^2K_1( \de_j y )  \Psi_{1,j,n}=\de_j^2h_n( \de_j y).$$
Furthermore, $\{\ti c_{i,n}\}$ is a bounded sequence in $\R$, so it follows that $\{\Psi_{i,j,n}\}_n$ is bounded in $H_{\al_j}(\R^2)$ for $i=1,2$ and $j=1,\dots,m$. Also, we have that
$$\int_{\Om_{j,n}} (\de_j^2|h_n(\de_j y)|)^p\, dy=\de_j^{2p-2}\int_{\Om_{\e_n} }|h_n(x)|^p\, dx=\de_i^{2p-2}\|h_n\|_p^p=o(1).$$
Therefore, taking into account \eqref{dik0} and \eqref{dik1} we deduce that $\Psi_{i,j,n}\to\Psi_j^*$ as $n\to+\infty$ with $i=1$ if $j=1,\dots,m_1$ and $i=2$ if $j=m_1+1,\dots,m$, where $\Psi^*_j$ is a solution to 
$$\lap\Psi+{2\al_j^2|y|^{\al_j-2}\over(1+|y|^{\al_j})^2}\Psi=0,\qquad j=1,\dots,m,\qquad  \text{in $\R^2\sm\{0\}$}.$$
It is standard that $\Psi_j^*$, $j=1,\dots,m$, extends to a solution in the whole $\R^2$. Hence, by using symmetry assumptions if necessary, we get that $\Psi^*_j=a_jY_{0j}$ for some constant $a_{j}\in\R$, $j=1,\dots,m$.
\end{proof}

Denote $\ti c_i=\displaystyle \lim_{n\to+\infty} \ti c_{i,n}$ for $i=0,1$, up to a subsequence if necessary. Hence, we get that
\begin{equation}\label{cpsi}
\Phi_{j,n} \to a_jY_{0j}-\ti c_0, \ \text{ for $j$ odd}\quad\text{and}\quad \Phi_{j,n} \to a_jY_{0j}-\ti c_1, \  \text{ for $j$ even,}
\end{equation}
weakly in $H_{\al_j}(\R^2)$ and strongly in $L_{\al_j}(\R^2)$, since $\Phi_{j,n}=\Psi_{i,j,n}-\ti c_{i,n}$.

\begin{claim}
For all $j=1,\dots,m$, there exist the limit, up to subsequences,
\begin{equation}\label{mujmff}
\mu_j=\lim_{n\to+\infty} \log\rho_n \int_{\Om_{j,n} }\Pi_j(y)\Psi_{l,j,n}(y)\, dy=0,
\end{equation}
where $l=0$ for $j$ odd and $l=1$ for $j$ even. Furthermore, it holds
\begin{equation}\label{eqmu1mff}
\begin{split}
\sum_{i=1}^m a_i=0.
\end{split}
\end{equation}
\end{claim}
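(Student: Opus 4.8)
The plan is to argue exactly as in the proof of Claim~\ref{claimmu}, again using the projected test functions $P_\e Z_{0j}$ from Section~\ref{sec3}, but applied to the \emph{symmetrized} form of the equation, in which the constants $\ti c_{0,n},\ti c_{1,n}$ (bounded, $\ti c_{l,n}=O(1)$, by Claim~\ref{5.4}) are absorbed into $\psi_{0,n},\psi_{1,n}$: from \eqref{eqphin} and the definitions of $\psi_{l,n},\ti c_{l,n}$ one has, for $l=0,1$, $\lap\psi_{l,n}+K_0\psi_{0,n}+K_1\psi_{1,n}=h_n$ in $\Om_{\e_n}$, with $\psi_{l,n}=\ti c_{l,n}$ on $\fr\Om_{\e_n}$. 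Fix $j$, let $l$ be the parity index of $j$ ($l=0$ for $j$ odd, $l=1$ for $j$ even), multiply the corresponding identity by $\gamma_0^{-1}P_\e Z_{0j}$ with $\gamma_0$ as in \eqref{gama0}, and integrate by parts. Since $P_\e Z_{0j}=0$ on $\fr\Om_{\e_n}$, the only boundary contribution is $\ti c_{l,n}\int_{\fr\Om_{\e_n}}\fr_\nu P_\e Z_{0j}=-\ti c_{l,n}\int_{\Om_{\e_n}}|x|^{\al_j-2}e^{w_j}Z_{0j}$; because the bubbles are centered at $0$ and the annuli shrink, $\int_{\Om_{\e_n}}|x|^{\al_j-2}e^{w_j}Z_{0j}=\int_{\Om_{j,n}}\Pi_jY_{0j}\,dy=O(\rho_n^{\kappa})$ for some $\kappa>0$ (using $\int_{\R^2}\Pi_jY_{0j}=0$), so this term is negligible even after multiplication by $\gamma_0^{-1}=O(|\log\rho_n|)$.

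With the boundary term disposed of, $\int_{\Om_{\e_n}}\lap\psi_{l,n}P_\e Z_{0j}=-\int_{\Om_{\e_n}}\psi_{l,n}|x|^{\al_j-2}e^{w_j}Z_{0j}$ combines with the $j$-th summand of $\int_{\Om_{\e_n}}K_l\psi_{l,n}P_\e Z_{0j}$ through $P_\e Z_{0j}-Z_{0j}=1-\gamma_0 G(\cdot,0)+O(\rho_n^{\ti\sigma})$, exactly as in Claim~\ref{claimmu}; all remaining terms are of the form $\int_{\Om_{\e_n}}|x|^{\al_k-2}e^{w_k}\psi_{l',n}P_\e Z_{0j}$ with $k\ne j$ (and $l'=0$ for $k$ odd, $l'=1$ for $k$ even), and after rescaling by $\de_k$ they are treated just as their analogues in Claim~\ref{claimmu}, the input being Claim~\ref{claim2mff} together with $\int_{\R^2}\Pi_kY_{0k}=0$ and $\int_{\R^2}\Pi_kY_{0k}\log|y|=-4\pi a_k$. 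Running this computation verbatim yields, for each $j$, the existence of $\mu_j=\lim_n\log\rho_n\int_{\Om_{j,n}}\Pi_j\Psi_{l,j,n}\,dy$, the relations $\mu_j+\mu_{j+1}=0$ for $j=1,\dots,m-1$, hence $\mu_j=(-1)^{j-1}\mu_1$, and an identity of the same shape as \eqref{eqmu1}, namely $2\sum_{i=1}^m a_i=\bigl(-\tfrac{\be_1+1}{4\pi(\al_1-2)}+\sum_{i=1}^m(-1)^{i+1}\tfrac{\be_i}{2\pi\al_i}\bigr)\mu_1$.

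It remains to see that $\mu_1$ itself vanishes, and this is forced by the mean field normalization. Since $\ti c_{0,n}=-\tfrac1{\la_0}\int_{\Om_{\e_n}}K_0\phi_n$ and $\int_{\Om_{\e_n}}K_0=4\pi\sum_{k\text{ odd}}\al_k+O(\rho_n^{\kappa})=\la_0+O(\rho_n^{\kappa})$ (and likewise for $K_1$ and $\la_1\tau^2$), we obtain $\int_{\Om_{\e_n}}K_0\psi_{0,n}=\ti c_{0,n}\bigl(\int_{\Om_{\e_n}}K_0-\la_0\bigr)=O(\rho_n^{\kappa})$; rescaling $K_0$ on the odd annuli as in the proof of Lemma~\ref{estrr0mf} turns this into $\sum_{k\text{ odd}}\int_{\Om_{k,n}}\Pi_k\Psi_{0,k,n}\,dy=O(\rho_n^{\kappa'})$, so multiplying by $\log\rho_n$ and letting $n\to\infty$ gives $\sum_{k\text{ odd}}\mu_k=0$, and symmetrically $\sum_{k\text{ even}}\mu_k=0$. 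Because $(-1)^{k-1}=1$ for every odd $k$, the relation $\mu_k=(-1)^{k-1}\mu_1$ forces $\bigl(\#\{k\le m:\ k\text{ odd}\}\bigr)\mu_1=0$, i.e.\ $\mu_1=0$; hence $\mu_j=0$ for all $j$, and substituting $\mu_1=0$ in the identity of the previous step gives $\sum_{i=1}^m a_i=0$.

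The step I expect to be the main obstacle is purely technical but unavoidable: each auxiliary quantity that ends up multiplied by a factor of order $|\log\rho_n|$ — namely $\int_{\Om_{\e_n}}|x|^{\al_j-2}e^{w_j}Z_{0j}$, the boundary contribution, and $\int_{\Om_{\e_n}}K_l\psi_{l,n}$ — must be shown to be \emph{polynomially} small in $\rho_n$ rather than merely $o(1)$, which requires careful use of the shrinking‑annuli estimates; this, together with carrying the constants $\ti c_{l,n}$ through the whole argument without losing control of them, is precisely the "get rid of the presence of $\ti c_j(\phi)$" difficulty flagged in the introduction to Section~\ref{sec3}, and is where the present linear theory is genuinely heavier than those of \cite{AP,EFP,GP,pr2}.
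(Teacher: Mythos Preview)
Your proposal is correct and follows essentially the same route as the paper: both test against $\gamma_0^{-1}P_\e Z_{0j}$, dispose of the boundary contribution $\ti c_{l,n}\int_{\Om_{\e_n}}|x|^{\al_j-2}e^{w_j}Z_{0j}=O(\rho_n^{\ti\sigma})$, rerun the computation of Claim~\ref{claimmu} with $\Psi_{l,j,n}$ in place of $\Phi_{j,n}$ to obtain $\mu_j=(-1)^{j-1}\mu_1$ and the analogue of \eqref{eqmu1}, and then exploit the mean field normalization $\int_{\Om_{\e_n}}K_0\psi_{0,n}=(1-\la_0^{-1}\int K_0)\int K_0\phi_n=O(\rho_n^{\ti\sigma})$ to force $\sum_{j\text{ odd}}\mu_j=0$, hence $\mu_1=0$ and $\sum a_i=0$. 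The only cosmetic difference is that the paper writes the integration by parts as $\int\lap\psi_l\,P_\e Z_{0j}=\int(\psi_l-\ti c_{l,n})\lap Z_{0j}$ directly, whereas you isolate the boundary term first; the content is identical.
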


\begin{proof}[\dem] To this aim we use the test function $P_\e Z_{0j}$, where $Z_{0j} $ as in Claim \ref{claim3}. Thus,  from the assumption on $h_n$, $|\log \rho_n|\ \|h_n\|_*=o(1)$, we get \eqref{mujmff}-\eqref{eqmu1mff}.

\medskip \noindent Assume that either $l=0$ for all $j$ odd or $l=1$ for all  $j$ even. Multiplying equation \eqref{eqphin} by $P_\e Z_{0j}$ and integrating by parts we obtain that
\begin{equation*}
\begin{split}
\int_{\Om_\e} hP_\e Z_{0j}=&\ \int_{\Om_\e}\lap Z_{0j} \lf[\psi_l - \ti c_{l,n}\rg] + \int_{\Om_\e} \lf[ K_0\psi_0+ K_1\psi_1\rg]  P_\e Z_{0j}  ,
\end{split}
\end{equation*}
in view of $P_\e Z_{0j}=0$ and $\psi_l=\ti c_{l,n}$ on $\fr\Om_\e$ and
\begin{equation*}
\begin{split}
\int_{\Om_\e}\lap\psi_l P_\e Z_{0j}&=\int_{\Om_\e}\psi_l\lap P_\e Z_{0j}-\psi_l\bigg|_{\fr\Om_\e}\int_{\Om_e}\lap PZ_{0j}=\int_{\Om_\e} \lap Z_{0j} \lf[\psi_l- \ti c_{l,n}\rg].
\end{split}
\end{equation*}
Furthermore, we have that
{\small
\begin{equation*}
\begin{split}
&\int_{\Om_\e} hPZ_{0j} 
= - \int_{\Om_\e} \lf[\psi_l - \ti c_{l,n}\rg]  |x|^{\al_j-2}e^{w_j}Z_{0j}  +\int_{\Om_\e} \lf[ K_0\psi_0+ K_1\psi_1\rg]  P_\e Z_{0j} 
 \\
&= \int_{\Om_\e} |x|^{\al_j-2} e^{w_j}\psi_l  \(PZ_{0j} - Z_{0j}\),+\ti c_{l,n} \int_{\Om_\e} |x |^{\al_j-2} e^{w_j}  Z_{0j} + \int_{\Om_\e} \(K_0\psi_0+K_1\psi_1 - |x |^{\al_j-2}e^{w_j}\psi_l \)  P_\e Z_{0j} .
\end{split}
\end{equation*}}
Multiplying by $\gamma_0^{-1}$ and arguing in the same way as in Claim \ref{claimmu} (replacing $\Psi_{l,i,n}$ by $\Phi_{i,n}$ with $l=0$ for $j$ odd and $l=1$ for $j$ even in \eqref{j0} and \eqref{j+1}) we deduce 
\begin{equation*}
\begin{split}
2\sum_{i=1}^m a_i=\bigg(-\frac{\be_1+1}{4\pi(\al_1-2)} + \sum_{i=1}^m(-1)^{i+1}\frac{\be_i}{2\pi \al_i}\bigg) \mu_1,
\end{split}
\end{equation*}
and $\mu_j=(-1)^{j-1} \mu_1$ for all $j=1,\dots,m-1$, in view of
$$\gamma_0^{-1} \int_{\Om_\e} hPZ_{0j}=O\lf(\, |\log\rho|\,\|h\|_p\rg)=o\lf( 1 \rg)\quad\text{and} \quad \gamma_0^{-1}\int_{\Om_\e} |x |^{\al_j-2} e^{w_j}  Z_{0j}=O(\rho^{\ti\sigma}|\log\rho|).$$
On the other hand, it is readily checked that
\begin{equation*}
\begin{split}
\int_{\Om_\e} K_0&=\sum_{k=1\atop k\text{ odd} }^{m}\int_{\Om_\e} |x|^{\al_k-2}e^{w_k}\ dx 
=\sum_{k=1 \atop k\text{ odd}}^{m }\lf[4\pi\al_k + O(\de_k^{\al_k}) +O\Big({\e^2\over\de_k^2}\Big)\rg]
=\la_0+O(\rho^{\ti\sigma})
\end{split}
\end{equation*}
and similarly $ \int_{\Om_\e} K_1=\la_2 \tau^2+O(\rho^{\ti\sigma})$ for some $\ti\sigma>0$, so that for $\psi_0$ and $\psi_1$ we have that
$$\int_{\Om_\e} K_i\psi_i=\lf(1-{1\over \la_i\tau^{2i} }\int_{\Om_\e} K_i\rg)\int_{\Om_\e} K_i\phi=O(\rho^{\ti\sigma})\int_{\Om_\e} K_i\phi=O(\rho^{\ti\sigma}).$$
Also, we get that 
$$\int_{\Om_\e} K_0\psi_0 =\sum_{j=1\atop j\text{ odd}}^{m} \int_{\Om_\e} |x|^{\al_j-2} e^{w_j} \psi_i = \sum_{j=1\atop j\text{ odd}}^{m_1}\int_{ \Om_{j,n} } {2\al_j^2|y|^{\al_j-2}\over (1+|y|^{\al_j})^2 }  \Psi_{0,j,n}(y)\, dy$$
and 
$$\int_{\Om_\e} K_1\psi_1  = \sum_{j=1\atop j\text{ even}}^{m}\int_{ \Om_{j,n} } {2\al_j^2|y|^{\al_j-2}\over (1+|y|^{\al_j})^2 } \Psi_{1,j,n}(y)\, dy.$$
Hence, it follows that
$$\lim_{n\to +\infty} \log\rho_n \int_{\Om_{\e_n}} K_0\psi_0=0=\sum_{j=1\atop j\text{ odd}}^{m}\mu_j =\mu_1\sum_{j=1\atop j\text{ odd}}^{m}1,$$
so that, $\mu_j=0$ for all $j=1,\dots,m$.
\end{proof}

\begin{claim}
There hold that
\begin{equation}\label{tclaj2sai}
0=\ti c_{l} +   a_j  + 2 \sum_{i>j}a_i,
\end{equation}
where $l=0$ for $j$ odd and $l=1$ for $j$ even.
\end{claim}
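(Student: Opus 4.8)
The plan is to test equation \eqref{eqphin} against the projected bubbles $P_\e w_j$, running the same scheme as in the proof of Claim \ref{claim3}, but keeping careful track of the mean-field constants $\ti c_0,\ti c_1$. For an index $i$ set $l(i)=0$ if $i$ is odd and $l(i)=1$ if $i$ is even, so that $l(j)$ is the $l$ appearing in the statement. Writing $\ml L(\phi_n)=\lap\phi_n+K_0\psi_{0,n}+K_1\psi_{1,n}=h_n$ with $\psi_{i,n}=\phi_n+\ti c_{i,n}$, and using $\lap P_\e w_j=\lap w_j=-|x|^{\al_j-2}e^{w_j}$, integration by parts (all boundary terms vanish because $\phi_n=P_\e w_j=0$ on $\fr\Om_{\e_n}$) gives
\begin{equation*}
\int_{\Om_\e}h_nP_\e w_j=-\int_{\Om_\e}\phi_n\,|x|^{\al_j-2}e^{w_j}+\sum_{i=1}^m\int_{\Om_\e}|x|^{\al_i-2}e^{w_i}\,\psi_{l(i),n}\,P_\e w_j .
\end{equation*}
The left-hand side is $O(|\log\rho|\,\|h_n\|_p)=o(1)$, since $P_\e w_j=O(|\log\rho|)$ and $|\log\rho_n|\,\|h_n\|_p=o(1)$.

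Next I would expand the $m$ integrals in the sum by scaling $x=\de_iy$ and inserting the expansion of $P_\e w_j$ from Lemma \ref{ewfxi}, together with $\log\de_i=\al_i^{-1}(\log d_i+\be_i\log\rho)$ and $\frac{\gamma_j}{2\pi}=2\be_j\frac{\al_1-2}{\be_1+1}+O(|\log\rho|^{-1})$. Since $\Psi_{l(i),i,n}\to a_iY_{0i}$ strongly in $L_{\al_i}(\R^2)$ by Claim \ref{claim2mff}, the $i$-th rescaled integral $\int_{\Om_{i,n}}\Pi_i\Psi_{l(i),i,n}[\,\cdots]$ has exactly the same structure and the same limiting moments $\int_{\R^2}\Pi_iY_{0i}=0$, $\int_{\R^2}\Pi_iY_{0i}\log(1+|y|^{\al_i})=-2\pi\al_i$, $\int_{\R^2}\Pi_iY_{0i}\log|y|=-4\pi$ as the corresponding integral in Claim \ref{claim3} with $\Phi_{i,n}$ replaced by $\Psi_{l(i),i,n}$. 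The sum therefore reproduces verbatim the $\log\rho$-order expansion obtained there; in it every contribution weighted by $\mu_i=\lim\log\rho_n\int_{\Om_{i,n}}\Pi_i\Psi_{l(i),i,n}$ vanishes because $\mu_i=0$, and the unique contribution weighted by $\sum_{i=1}^m a_i$ vanishes because $\sum_{i=1}^m a_i=0$, both facts being provided by the previous claim. Hence the limit of the whole sum equals $4\pi\al_j a_j+8\pi\al_j\sum_{i>j}a_i$.

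It remains to identify the first term. Scaling $x=\de_jy$ and writing $\Phi_{j,n}=\Psi_{l(j),j,n}-\ti c_{l(j),n}$,
\begin{equation*}
-\int_{\Om_\e}\phi_n\,|x|^{\al_j-2}e^{w_j}=-\int_{\Om_{j,n}}\Pi_j\Psi_{l(j),j,n}+\ti c_{l(j),n}\int_{\Om_{j,n}}\Pi_j\ \longrightarrow\ -a_j\!\int_{\R^2}\!\Pi_jY_{0j}+\ti c_{l(j)}\!\int_{\R^2}\!\Pi_j=4\pi\al_j\,\ti c_{l(j)},
\end{equation*}
where I used that $\Om_{j,n}=\de_j^{-1}\Om_{\e_n}$ exhausts $\R^2$ (since $\e_n/\de_j\to0$ and $\de_j\to0$), that $\int_{\R^2}\Pi_jY_{0j}=0$, $\int_{\R^2}\Pi_j=4\pi\al_j$, and that $\ti c_{l(j),n}\to\ti c_{l(j)}$ is bounded by Claim \ref{5.4}. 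Combining this with the limit of the sum gives $0=4\pi\al_j\ti c_{l(j)}+4\pi\al_j a_j+8\pi\al_j\sum_{i>j}a_i$, which is \eqref{tclaj2sai} after dividing by $4\pi\al_j$.

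The main obstacle is bookkeeping rather than a new idea. One must check that the only trace of the mean-field corrections that survives the limit comes from the term $-\int_{\Om_\e}\phi_n|x|^{\al_j-2}e^{w_j}$ — which in turn is because $\Phi_{j,n}\to a_jY_{0j}-\ti c_{l(j)}$ in \eqref{cpsi} while the $\Psi_{l(i),i,n}$ carry no constant shift — and that the identities $\mu_j=0$ and $\sum_i a_i=0$ from the previous claim are precisely what collapses the long $\log\rho$-order expansion of Claim \ref{claim3} into the clean identity \eqref{tclaj2sai}. One also has to reconfirm, exactly as in Claim \ref{claim3}, that the $O(\rho^\eta)$ remainders produced by the expansion of $P_\e w_j$ are negligible after the rescalings.
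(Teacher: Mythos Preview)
Your proposal is correct and follows essentially the same approach as the paper: both test \eqref{eqphin} against $P_\e w_j$, rewrite $K_0\psi_0+K_1\psi_1=\sum_i|x|^{\al_i-2}e^{w_i}\psi_{l(i)}$, and rerun the expansion of Claim \ref{claim3} with $\Phi_{i,n}$ replaced by $\Psi_{l(i),i,n}$, then invoke $\mu_i=0$ and $\sum_i a_i=0$ from the preceding claim to collapse the $\log\rho$-order terms. The only cosmetic difference is that the paper substitutes $\phi_n=\psi_{l(j)}-\ti c_{l(j),n}$ before integrating, whereas you keep $\phi_n$ and split $\Phi_{j,n}=\Psi_{l(j),j,n}-\ti c_{l(j),n}$ after rescaling; both routes yield the extra $4\pi\al_j\ti c_{l(j)}$ contribution.
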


\begin{proof}[\dem] To this aim we will use as tests functions $P_\e w_j$. 
Hence, multiplying equation \eqref{eqPhi0} by $P_\e w_j$ and integrating by parts as in the previous Claim we obtain that
\begin{equation*}
\begin{split}
\int_{\Om_\e} hPw_{j} 
=&\,- \int_{\Om_\e} [\psi_l - \ti c_{l,n} ] |x|^{\al_j-2}e^{w_j} +\int_{\Om_\e} [K_0\psi_0+K_1\psi_j]  P_\e w_{j}\\
=&\,- \int_{\Om_\e} \psi_l |x|^{\al_j-2}e^{w_j} + \ti c_{l,n}\int_{\Om_\e} |x|^{\al_j-2}e^{w_j} + \int_{\Om_\e} [K_0\psi_0+K_1\psi_1]  P_\e w_{j}
\end{split}
\end{equation*}
in view of $P_\e w_{j}=0$ on $\fr\Om_{\e_n}$ and $ \int_{\Om_\e}\lap\psi_l P_\e w_{j}=\int_{\Om_\e}\psi_l\; \lap P_\e w_{j}$. Arguing in the same way as in Claim \ref{claim3} (replacing $\Psi_{l,i,n}$ by $\Phi_{i,n}$ with $l=0$ for $j$ odd and $l=1$ for $j$ even in \eqref{siphipw} and \eqref{ipy0l1}) we find that
\begin{equation*}
\begin{split}
0=&\,4\pi \al_j\ti c_{l} + 4\pi \al_j a_j + \sum_{i\le j} \Big(-2\be_j + 2\be_j\frac{\al_1-2}{\be_1+1} \frac{\be_i}{\al_i}\Big) \mu_i - 8\pi \be_j\frac{\al_1-2}{\be_1+1}\sum_{i=1}^m  a_i \\
&\,+\sum_{i>j} \Big( -2\al_j \frac{\be_i}{\al_i} + 2\be_j \frac{\al_1-2}{\be_1+1} \frac{\be_i}{\al_i}\Big) \mu_i + 8\pi \al_j \sum_{i>j}a_i,
\end{split}
\end{equation*}
in view of
$$\int_{\Om_\e} hPw_{j}=O\lf(\, |\log\rho|\,\|h\|_p\rg)=o\lf( 1 \rg),$$
$$\int_{\Om_\e} \psi_l |x|^{\al_j-2}e^{w_j}=\int_{\Om_{j,n } }  {2\al_j^2|y|^{\al_j-2}\over (1+|y|^{\al_j})^2 }\Psi_{l,j,n} =a_j\int_{\R^2 }  {2\al_j^2|y|^{\al_j-2}\over (1+|y|^{\al_j})^2 }Y_{0j} \, dy +o(1)=o(1)$$
and
$$\int_{\Om_\e} |x|^{\al_j-2}e^{w_j}=\int_{\Om_{j,n } }  {2\al_j^2|y|^{\al_j-2}\over (1+|y|^{\al_j})^2 } \, dy =\int_{\R^2 }  {2\al_j^2|y|^{\al_j-2}\over (1+|y|^{\al_j})^2 } \, dy +o(1)=4\pi \al_j + o(1).$$
By using \eqref{mujmff} and \eqref{eqmu1mff} we obtain that  \eqref{tclaj2sai} for $l=0$  with $j$ odd and $l=1$ with $j$ even.
\end{proof}

For the next step, consider the function $\eta_j(x)={4\over 3}\log(\de_j^{\al_j}+|x|^{\al_j}){\de_j^{\al_j}-|x|^{\al_j}\over \de_j^{\al_j}+|x|^{\al_j} }+ {8\over 3}{\de_j^{\al_j} \over \de_j^{\al_j}+|x|^{\al_j} },$ for any $j\in\{1,\dots, m\}$ 
 so that $\lap\eta_j+|x|^{\al_j-2}e^{w_j}\eta_j=|x|^{\al_j-2}e^{w_j}Z_{0j}.$ Notice that 
$$P_\e \eta_{j}=\eta_{j}+{8\pi\over 3}\al_j H(x,0)-  \ti\gamma_{j} G(x,0)+O(\rho^{\ti\sigma})\quad\text{uniformly in $\Om_\e$ for some $\ti\sigma>0$},$$
by using similar arguments as to obtain expansion in Lemma \ref{ewfxi}, as shown in \cite[Lemma 4.1]{EFP} with $m=1$ and $\xi=0$, where the coefficients $\ti\gamma_{i}$'s, $i=1,\dots,m$, are given by
\begin{equation}\label{gamatij}
\ti\gamma_{i}\lf[-{1\over 2\pi}\log\e+H(0,0)\rg]=
{4\over 3} \al_i\log\de_i +{8\over 3} + {8\pi\over 3}\al_i H(0,0)
\end{equation}
From \eqref{dei} it follows that $ \ti\gamma_{i}=
-\frac{8\pi(\al_1-2)\be_i}{3(\be_1+1)} + O\big({1\over |\log\rho|}\big),
$.

\begin{claim}\label{claim32}
There hold that $a_{j} +4\sum_{i>j} a_i+ 2\ti c_l=0$ with $l=0$ for all $j$ odd and $l=1$ for all $j$ even. Consequently, combining with \eqref{tclaj2sai} it follows that $a_i=0$ for all $i=1,\dots,m$.
\end{claim}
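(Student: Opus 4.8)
The plan is to test the linearized equation $\ml L(\phi_n)=h_n$ against the projected functions $P_\e\eta_j$, $j=1,\dots,m$, in the same spirit in which $P_\e w_j$ was used to derive \eqref{tclaj2sai}, and then to eliminate the constants $\ti c_0,\ti c_1$ by combining the two resulting families of identities. The function $\eta_j$ is tailor-made for this, since $\lap\eta_j+|x|^{\al_j-2}e^{w_j}\eta_j=|x|^{\al_j-2}e^{w_j}Z_{0j}$ and, by \eqref{gamatij}, $P_\e\eta_j=\eta_j+\frac{8\pi}{3}\al_jH(x,0)-\ti\gamma_jG(x,0)+O(\rho^{\ti\sigma})$ with $\ti\gamma_j=-\frac{8\pi(\al_1-2)\be_j}{3(\be_1+1)}+O(1/|\log\rho|)$; in particular $P_\e\eta_j=O(|\log\rho|)$ uniformly, so that $\int_{\Om_\e}h_nP_\e\eta_j=O(|\log\rho|\,\|h_n\|_p)=o(1)$.

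First I would multiply \eqref{eqphin} by $P_\e\eta_j$ and integrate by parts — using $P_\e\eta_j=0$ and $\psi_{l,n}\equiv\ti c_{l,n}$ on $\fr\Om_\e$, together with $\lap P_\e\eta_j=\lap\eta_j=|x|^{\al_j-2}e^{w_j}(Z_{0j}-\eta_j)$ — so as to rewrite the identity in terms of $\int_{\Om_\e}\phi_n|x|^{\al_j-2}e^{w_j}(Z_{0j}-\eta_j)$, the boundary constant $\ti c_{l,n}$, and the interaction integrals $\int_{\Om_\e}K_0\psi_{0,n}P_\e\eta_j+\int_{\Om_\e}K_1\psi_{1,n}P_\e\eta_j$. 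Next I would rescale each term by $x=\de_iy$ and pass to the limit, invoking the convergence of \eqref{cpsi} and Claim \ref{claim2mff} (namely $\Phi_{i,n}\to a_iY_{0i}-\ti c_l$ and $\Psi_{l,i,n}\to a_iY_{0i}$, with $l$ matching the parity of $i$), the vanishing $\mu_i=0$ from \eqref{mujmff}, and $\sum_ia_i=0$ from \eqref{eqmu1mff}. Exactly as in the passage leading to \eqref{siphipw}, the logarithmic factor $\frac43\al_j\log\de_j$ carried by $\eta_j(\de_jy)$ produces $\log\rho$-divergent contributions, but each of them is matched, with opposite sign, by a divergent contribution coming from the diagonal interaction term, so the two cancel; every other $\log\rho$-weighted contribution carries a factor $\int_{\Om_{i,n}}\Pi_i\Phi_{i,n}$ whose weighted limit is a $\mu_i$ and hence vanishes; and the surviving finite contributions are the explicit model integrals over $\R^2$ already used in \eqref{ipy0l} and \eqref{ipy0l1}, together with $\int_{\R^2}\Pi_iY_{0i}=0$ and the elementary integrals of $\eta_j(\de_jy)$ against $\Pi_j$. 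Collecting everything and dividing by $4\pi\al_j$ then produces the identity $a_j+4\sum_{i>j}a_i+2\ti c_l=0$, with $l=0$ for $j$ odd and $l=1$ for $j$ even, which is the first assertion of the claim.

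To conclude, I would compare this with \eqref{tclaj2sai}, i.e. $\ti c_l+a_j+2\sum_{i>j}a_i=0$: subtracting twice \eqref{tclaj2sai} from it cancels both $\sum_{i>j}a_i$ and $\ti c_l$ and leaves $-a_j=0$, so $a_j=0$ for every $j=1,\dots,m$, whence \eqref{tclaj2sai} forces $\ti c_0=\ti c_1=0$. Consequently all the limits in \eqref{cpsi} vanish, so $\Phi_{j,n}\to0$ strongly in $L_{\al_j}(\R^2)$ for each $j$ and $\ti c_{0,n},\ti c_{1,n}\to0$; inserting this into \eqref{snpsi} yields $\sum_{i=1}^m2\al_i^2\|\Phi_{i,n}\|_{L_{\al_i}}^2\to0$, contradicting $\sum_{i=1}^m2\al_i^2\|\Phi_{i,n}\|_{L_{\al_i}}^2=1+o(1)$. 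This contradiction establishes the a priori bound \eqref{estphi}, after which the solvability asserted in Proposition \ref{elle} follows from the Riesz representation theorem and Fredholm's alternative, just as in the Liouville case.

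The step I expect to be the main obstacle is the bookkeeping in the integration by parts: one must track the logarithmic factor of $\eta_j$ through the rescaling in all three regimes $i<j$, $i=j$ and $i>j$, check that every genuinely $\log\rho$-divergent piece either cancels across the identity or reorganizes into a $\mu_i$ (which is zero by \eqref{mujmff}), and then evaluate the handful of finite model integrals over $\R^2$ so that the coefficients in front of $a_j$, $\sum_{i>j}a_i$ and $\ti c_l$ come out to be exactly $1$, $4$ and $2$. Everything else is a direct repetition of the computations already carried out for Claim \ref{claim3} and for \eqref{tclaj2sai}.
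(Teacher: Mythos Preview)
Your proposal is correct and follows essentially the same route as the paper: test \eqref{eqphin} against $P_\e\eta_j$, integrate by parts (using $\lap P_\e\eta_j=\lap\eta_j=|x|^{\al_j-2}e^{w_j}(Z_{0j}-\eta_j)$ and the boundary values), combine the $-\eta_j$ piece from the Laplacian with the diagonal $i=j$ contribution of $K_0\psi_0+K_1\psi_1$ so as to leave only $|x|^{\al_j-2}e^{w_j}\psi_l\,(P_\e\eta_j-\eta_j)$, rescale each remaining term by $x=\de_iy$, kill every $\log\rho$-weighted integral via \eqref{mujmff} and \eqref{eqmu1mff}, and evaluate the surviving finite integrals (the same model integrals $\int_{\R^2}\Pi_iY_{0i}\log|y|=-4\pi$, $\int_{\R^2}\Pi_jY_{0j}^2=\frac{4\pi}{3}\al_j$, $\int_{\R^2}\Pi_jY_{0j}\log(1+|y|^{\al_j})=-2\pi\al_j$, $\int_{\R^2}\frac{\Pi_j}{1+|y|^{\al_j}}=2\pi\al_j$) to produce $a_j+4\sum_{i>j}a_i+2\ti c_l=0$; then subtracting twice \eqref{tclaj2sai} gives $a_j=0$ and hence $\ti c_0=\ti c_1=0$. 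The contradiction with \eqref{snpsi} that you sketch afterwards is exactly how the paper closes the proof of Proposition~\ref{elle}.
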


\begin{proof}[\dem]
We use the following test function $P_\e \eta_j$. Thus,  from the assumption on $h_n$, $|\log \rho_n|\ \|h_n\|_*=o(1)$, we get the above relation between $a_j$ and $\ti c_i$ either for $l=0$ and all $j$ odd or for $l=1$ and all $j$ even. Assume that $l=0$ for all $j$ odd or $l=1$ for all  $j$ even. Multiplying equation \eqref{eqphin} by $P_\e \eta_j$ and again integrating by parts we obtain that
{\small
\begin{equation*}
\begin{split}
&\int_{\Om_\e} hP\eta_{j} \!
=\! \int_{\Om_\e} \! \lf[\psi_l - \ti c_{l,n}\rg] \Big[|x|^{\al_j-2}e^{w_j}Z_{0j} - |x|^{\al_j-2} e^{w_j}\eta_j \Big] \! +\!  \int_{\Om_\e} \! \lf[
K_0\psi_0+ K_1\psi_1\rg]  P_\e \eta_{j} 
\! =\! \int_{\Om_\e} \! \psi_i |x |^{\al_j-2}e^{w_j}Z_{0j} \\
&+\int_{\Om_\e} |x|^{\al_j-2} e^{w_j}\psi_l \( P\eta_j-\eta_j\)  +\int_{\Om_\e} \(K_0\psi_0+K_1\psi_1 - |x |^{\al_j-2}e^{w_j}\psi_l \)  P_\e \eta_{j} -\ti c_{l,n} \int_{\Om_\e} |x |^{\al_j-2} e^{w_j} \( Z_{0j}-\eta_j \),  
\end{split}
\end{equation*}}
Now, estimating every integral term we find that $ \int_{\Om_\e} hP\eta_{j}=O\lf(\, |\log\rho|\,\|h\|_p\rg)=o\lf( 1 \rg)$ for all $j=1,\dots, m$, in view of $P\eta_{j}=O(|\log\rho|)$ and $G(x,0)=O(|\log\rho|)$. Next, by scaling we obtain that either for $l=0$ and all $j$ odd or $l=1$ and all $j$ even, it holds
$$\int_{\Om_\e} \psi_l |x|^{\al_j-2}e^{w_j}Z_{0j}=\int_{\Om_{j,n } }  {2\al_j^2|y|^{\al_j-2}\over (1+|y|^{\al_j})^2 }\Psi_{l,j,n}Y_{0j}\, dy =a_j\int_{\R^2 }  {2\al_j^2|y|^{\al_j-2}\over (1+|y|^{\al_j})^2 }Y_{0j}^2 \, dy +o(1).$$
Note that
$$\int_{\R^2} {2\al_j^2|y|^{\al_j-2}\over (1+|y|^{\al_j})^2}Y_{0j}^2=\int_{\R^2} {2\al_j^2|y|^{\al_j-2}\over (1+|y|^{\al_j})^2}\({1-|y|^{\al_j} \over 1+|y|^{\al_j} }\)^2dy={4\pi\over 3}\al_j$$
and 
$$\int_{\R^2} {2\al_j^2|y|^{\al_j-2}\over (1+|y|^{\al_j})^2}Y_{0j}\log|y|=\int_{\R^2} {2\al_j^2|y|^{\al_j-2}\over (1+|y|^{\al_j})^2}\ {1-|y|^{\al_j} \over 1+|y|^{\al_j} }\ \log|y|\, dy= -4\pi .$$
Also, by using \eqref{gamatij} we get that
\begin{equation*}
\begin{split}
& \int_{\Om_\e} |x|^{\al_j-2} e^{w_j} \psi_l \( P_\e \eta_j-\eta_j \)= \int_{\Om_\e} |x|^{\al_j-2} e^{w_j} \psi_l\bigg[ \( {8\pi\over 3} \al_j - \ti\gamma_{j} \) H(x,0)+ {1\over 2\pi} \ti\gamma_{j} \log|x | +O(\rho^{\ti\sigma}) \bigg] \\
&= \( {8\pi\over 3} \al_j - \ti\gamma_{j} \) \int_{\Om_{j,n } }  {2\al_j^2|y|^{\al_j-2}\over (1+|y|^{\al_j})^2 }\Psi_{l,j,n} H(\de_jy,0)\, dy + {\ti\gamma_{j}\over 2\pi}\log\de_j \int_{\Om_{j,n } }  {2\al_j^2|y|^{\al_j-2}\over (1+|y|^{\al_j})^2 }\Psi_{l,j,n}  \, dy \\
&\, + {\ti\gamma_{j}\over 2\pi} \int_{\Om_{j,n } }  {2\al_j^2|y|^{\al_j-2}\over (1+|y|^{\al_j})^2 }\Psi_{l,j,n} \log |y|  dy +O(\rho^{\ti\sigma}) = -\frac{4(\al_1-2)\be_j}{3(\be_!+1)} a_j \int_{\R^2 }  {2\al_j^2|y|^{\al_j-2}\over (1+|y|^{\al_j})^2 }Y_{0j} \log |y| dy \! + \! o(1),
\end{split}
\end{equation*}
in view of \eqref{mujmff}. Furthermore, using \eqref{K12} we have that

Notice that
$$\int_{\Om_{j,n} } {2\al_j^2|y|^{\al_j-2}\over (1+|y|^{\al_j})^2 }  \Psi_{i,j,n}(y)\, dy=a_j\int_{\R^2 } {2\al_j^2|y|^{\al_j-2}\over (1+|y|^{\al_j})^2 }  Y_{0j}(y)\, dy+o(1)=o(1),$$
since
$$\int_{\R^2 } {2\al_j^2|y|^{\al_j-2}\over (1+|y|^{\al_j})^2 }  Y_{0j}(y)\, dy=\int_{\R^2 } {2\al_j^2|y|^{\al_j-2}\over (1+|y|^{\al_j})^2 } \cdot {1-|y|^{\al_j} \over 1+|y|^{\al_j} }\, dy=0.$$
If $i<j$ then
\begin{equation*}
\begin{split}
P_\e\eta_j(\de_i y)=&\, \bigg[\frac43 \al_j\log\de_j +\frac43 \log\Big(1+\Big(\frac{\de_i|y|}{\de_j}\Big)^{\al_j}\Big)\bigg] \frac{1-\big(\frac{\de_i|y|}{\de_j}\big)^{\al_j} }{1 +\big(\frac{\de_i|y|}{\de_j}\big)^{\al_j} } + \frac83\cdot \frac{1}{1+\big(\frac{\de_i|y|}{\de_j}\big)^{\al_j} }\\
&\, +\frac{\ti\gamma_j}{2\pi } \log(\de_i |y|) + \Big(\frac{8\pi}{3}\al_j - \ti\gamma_j\Big)H(\de_i y ,0) + O(\rho^{\ti\sigma})
\end{split}
\end{equation*}
and
\begin{equation*}
\begin{split}
\int_{\Om_{\e} } |x|^{\al_i-2} e^{w_i} \psi_l P_\e\eta_j=&\,\int_{\Om_{i,n} }\Pi_i \Psi_{l,i,n} P_\e\eta_j(\de_i y)= \frac{\ti\gamma_j}{2\pi}\int_{\Om_{i,n}}\Pi_i\Psi_{l,i,n}(y)\log|y| + o(1)\\
=&\,\frac{16\pi(\al_1-2)\be_j}{3(\be_1+1)} a_i + o(1)
\end{split}
\end{equation*}
in view of $\frac{\de_i|y|}{\de_j}=o(1)$ uniformly for $y$ on compact subsets, \eqref{mujmff} and dominated convergence. Similarly, if $i>j$ then
\begin{equation*}
\begin{split}
P_\e\eta_j(\de_i y)=&\, \bigg[\frac43 \al_j\log\de_j +\frac43 \log\Big(\Big(\frac{\de_j}{\de_i}\Big)^{\al_j} + |y|^{\al_j}\Big)\bigg] \frac{\big(\frac{\de_j}{\de_i}\big)^{\al_j} - |y|^{\al_j} }{\big(\frac{\de_j}{\de_i}\big)^{\al_j} + |y|^{\al_j} } + \frac83\Big(\frac{\de_j}{\de_i}\Big)^{\al_j} \cdot \frac{1}{\big(\frac{\de_j}{\de_i}\big)^{\al_j}+ |y|^{\al_j} }\\
&\, +\frac{\ti\gamma_j}{2\pi } \log(\de_i |y|) + \Big(\frac{8\pi}{3}\al_j - \ti\gamma_j\Big)H(\de_i y ,0) + O(\rho^{\ti\sigma})
\end{split}
\end{equation*}
and
{\small
\begin{equation*}
\begin{split}
&\int_{\Om_{\e} } |x|^{\al_i-2} e^{w_i} \psi_l P_\e\eta_j= \int_{\Om_{i,n} }\Pi_i \Psi_{l,i,n} P_\e\eta_j(\de_i y)
=  \frac43 \int_{\Om_{i,n}}\Pi_i \Psi_{l,i,n}  \log\Big(\Big(\frac{\de_j}{\de_i}\Big)^{\al_j} + |y|^{\al_j}\Big) \frac{\big(\frac{\de_j}{\de_i}\big)^{\al_j} - |y|^{\al_j} }{\big(\frac{\de_j}{\de_i}\big)^{\al_j} + |y|^{\al_j} } \\
&\, +  \frac{\ti\gamma_j}{2\pi}\int_{\Om_{i,n}}\Pi_i\Psi_{l,i,n}(y)\log|y| + o(1)
= -\frac43 \al_j a_i(-4\pi ) + \frac{16\pi(\al_1-2)\be_j}{3(\be_1+1)} a_i + o(1)
\end{split}
\end{equation*}
}
in view of $\frac{\de_j}{\de_i}=o(1)$,
{\small
\begin{equation*}
\begin{split}
\frac43 \al_j\log\de_j \int_{\Om_{i,n} }\Pi_i \Psi_{l,i,n}  \frac{\big(\frac{\de_j}{\de_i}\big)^{\al_j} - |y|^{\al_j} }{\big(\frac{\de_j}{\de_i}\big)^{\al_j} + |y|^{\al_j} }=&\, \frac43\log d_j \int_{\Om_{i,n} }\Pi_i \Psi_{l,i,n}  \frac{\big(\frac{\de_j}{\de_i}\big)^{\al_j} - |y|^{\al_j} }{\big(\frac{\de_j}{\de_i}\big)^{\al_j} + |y|^{\al_j} } - \frac43 \be_j\log\rho \int_{\Om_{i,n} }\Pi_i \Psi_{l,i,n}  \\
&\, +\frac83 \be_j \Big(\frac{\de_j}{\de_i}\Big)^{\al_j} \log\rho \int_{\Om_{i,n} }\Pi_i \Psi_{l,i,n}  \frac{ 1 }{\big(\frac{\de_j}{\de_i}\big)^{\al_j} + |y|^{\al_j} }  = o(1),
\end{split}
\end{equation*}
}
\begin{equation*}
\begin{split}
\int_{\Om_{i,n} }\Pi_i \Psi_{l,i,n} \log\Big(\Big(\frac{\de_j}{\de_i}\Big)^{\al_j} + |y|^{\al_j}\Big) \frac{\big(\frac{\de_j}{\de_i}\big)^{\al_j} - |y|^{\al_j} }{\big(\frac{\de_j}{\de_i}\big)^{\al_j} + |y|^{\al_j} }=&\, -\al_j a_i\int_{\R^2 }\Pi_i Y_{0i} \log |y| +o(1),
\end{split}
\end{equation*}
\eqref{mujmff} and dominated convergence. If $l=0$ for $j$ odd and $l=1$ for $j$ even, we get that
{\small
\begin{equation*}
\begin{split}
&\int_{\Om_\e} \left[K_0 \psi_0+ K_1\psi_1 - |x|^{\al_j-2}e^{w_j}\psi_l\right] P_\e \eta_{j}=  \sum_{i<j}  \frac{16\pi(\al_1-2)\be_j}{3(\be_1+1)} a_i +\frac{16\pi(\al_1-2)\be_j}{3(\be_1+1)} a_j \\
&\,+ \sum_{i>j} \left[\frac{16\pi(\al_1-2)\be_j}{3(\be_1+1)} a_i  + \frac{16\pi }{3} \al_j a_i\right] + o(1) = \frac{16\pi(\al_1-2)\be_j}{3(\be_1+1)}  \sum_{i=1\atop i\ne j}^m   a_i  + \frac{16\pi }{3} \al_j  \sum_{i>j}a_i + o(1) .
\end{split}
\end{equation*}
}
Besides, similarly as above we obtain that
{\small
\begin{align*}
&\int_{\Om_\e} |x|^{\al_j-2} e^{w_j} \( Z_{0j}-\eta_j \) 
= \int_{\Om_\e} |x|^{\al_j-2} e^{w_j}  Z_{0j} - \int_{\Om_\e} |x|^{\al_j-2} e^{w_j} \eta_j=  \int_{B_{r\over \de_j}(0)\sm B_{\e_j\over\de_j}(0)} {2\al_j^2|y|^{\al_j-2}\over(1+|y|^{\al_j})^2}  {1-|y|^{\al_j}\over 1+|y|^{\al_j}}  \\
&- \int_{\Om_{j,n } }  {2\al_j^2|y|^{\al_j-2}\over (1+|y|^{\al_j})^2 } \lf[ {4\over 3}  \log\(\de_j^{\al_j} + \de_j^{\al_j}|y|^{\al_j}\)  Y_{0j}(y) + {8\over 3}{1\over 1+|y|^{\al_j} }\rg]dy + O(\de_j^{\al_j}) \\
&= O(\rho^{\ti\sigma} |\log\rho|)- {4\over 3} \al_j \log \de_j  \int_{\Om_{j,n } }  {2\al_j^2|y|^{\al_j-2}\over (1+|y|^{\al_j})^2 } Y_{0j}(y) \, dy - {4\over 3}  \int_{\Om_{j,n } }  {2\al_j^2|y|^{\al_j-2}\over (1+|y|^{\al_j})^2 }  Y_{0j}(y) \log\(1 +|y|^{\al_j}\) \, dy \\
&\, -  {8\over 3} \int_{\Om_{j,n } }  {2\al_j^2|y|^{\al_j-2}\over (1+|y|^{\al_j})^2 } {1\over 1+|y|^{\al_j} }\, dy = - {8\pi\over 3}\al_j +o(1),
\end{align*}
}
in view of
$$\int_{\R^2}  {2\al_j^2|y|^{\al_j-2}\over (1+|y|^{\al_j})^2 }  Y_{0j}(y) \log\(1 +|y|^{\al_j}\) \, dy  =-2\pi\al_j\qquad \text{and}\qquad\int_{\R^2 }  {2\al_j^2|y|^{\al_j-2}\over (1+|y|^{\al_j})^2 } {1\over 1+|y|^{\al_j} }\, dy =2\pi\al_j.$$
Therefore, we conclude that
{\small
$$o(1)= a_j\lf({4\pi\over 3}\al_j+o(1)\rg) + \frac{16\pi(\al_1-2)\be_j}{3(\be_1+1)} a_j + \frac{16\pi(\al_1-2)\be_j}{3(\be_1+1)}  \sum_{i=1\atop i\ne j}^m   a_i  + \frac{16\pi }{3} \al_j  \sum_{i>j}a_i - \ti c_{l,n} \lf( - {8\pi\over 3}\al_j+o(1)\rg),$$}
and the conclusion follows. 
\end{proof}

Now, by using Claims \ref{5.4}-\ref{claim32} and arguing similarly to the proof of Proposition \ref{p2}, we deduce the a-priori estimate \eqref{estphi}. This finishes the proof.
\end{proof}

\begin{appendix}
\section{\hspace{-0.5cm} }

\begin{lem}\label{estint}
There exist $p_0>1$ close to 1 such that all the following integrals are of order $O\lf(\rho^{p\eta_p}\rg)$ for any $1<p\le p_0$ and for some $ \eta_p>0 $: $(i)\   \de_j^{2-p} \int_{A_j\over \de_j } \big | \frac{|y|^{\al_j-1} }{ (1 +|y|^{\al_j})^{2}}\big|^pdy;$  $(ii)\ 
\rho^{\eta}\de_j^{2-2p}\int_{A_j\over \de_j}\big | \frac{|y|^{\al_j-2} }{(1 +|y|^{\al_j})^{2}}\big|^pdy; $  $(iii) \  \de_j^{2-2p}\big({\de_i\over \de_j}\big)^{\al_i p} \int_{A_j\over \de_j}\big | \frac{ 1  }{ |y|^{\al_j+2} }\big|^pdy$, for $ i<j;$ $\ (iv)\  \de_j^{2-2p}\big({\de_j\over \de_i}\big)^{\al_i p} \int_{A_j\over \de_j}\big |  |y|^{\al_j-2} \big|^pdy$, for $\ j<i;$

\noindent $(v)\   \rho^{(1+\tau)p} \de_j^{2+2\tau p}\int_{A_j\over \de_j}\big | \frac{(1 +|y|^{\al_j})^{2\tau}}{ |y|^{(\al_j-2)\tau} }\big|^pdy$, for $j$ odd and $(vi)\  \rho^{(1+1/\tau)p} \de_j^{2+2p/\tau }\int_{A_j\over \de_j}\big | \frac{(1 +|y|^{\al_j})^{2/\tau}}{ |y|^{(\al_j-2)/\tau} }\big|^pdy$ for $j$ even.

\end{lem}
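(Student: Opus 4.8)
The plan is to pass to polar coordinates in each scaled annulus $\frac{A_j}{\de_j}$ and reduce every item to a constant times an explicit power of $\rho$. First I would record the shape of the rescaled annuli: since $A_j=\{\sqrt{\de_{j-1}\de_j}<|x|\le\sqrt{\de_j\de_{j+1}}\}$ with the convention $\de_0=\e^2/\de_1$, $\de_{m+1}=M_0^2/\de_m$, the set $\frac{A_j}{\de_j}$ is exactly the annulus $\{r_j^-<|y|\le r_j^+\}$ with $r_j^-=(\de_{j-1}/\de_j)^{1/2}$ and $r_j^+=(\de_{j+1}/\de_j)^{1/2}$, and the estimates of Section~\ref{sec2} give $r_j^-\to0$ and $r_j^+\to+\infty$ as $\rho\to0$. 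For each integral I would split the radial variable at $|y|=1$ and use $(1+|y|^{\al_j})^{-1}\le1$ for $|y|\le1$ and $(1+|y|^{\al_j})^{-1}\le|y|^{-\al_j}$ for $|y|\ge1$, so that on each of the two pieces the integrand is dominated by a pure power $|y|^{c}$.

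Two regimes then occur. When the associated radial integrand $r^{c+1}$ is integrable on all of $(0,+\infty)$ — which, using $\al_j>2$ and $p$ close to $1$, covers items $(i)$, $(ii)$ and (for suitable exponents) the $|y|\le1$ part of $(v)$, $(vi)$ — I would bound $\int_{\frac{A_j}{\de_j}}$ by $\int_{\R^2}$, contributing only a finite constant, so that the whole item reduces to a product of powers of the $\de_k$'s, of $\e$ and of $\rho$. When instead the integrand blows up at an endpoint, the integral is $O\big((r_j^-)^{c+2}\big)$ at the endpoint $0$ (items of type $(iii)$, and the $|y|\le1$ part of $(v)$, $(vi)$ when non-integrable there) or $O\big((r_j^+)^{c+2}\big)$ at the endpoint $+\infty$ (items of type $(iv)$, and the $|y|\ge1$ part of $(v)$, $(vi)$), which produces an extra factor that is a power of $\de_{j-1}/\de_j$ or of $\de_{j+1}/\de_j$.

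Next I would substitute $\de_k=d_k^{1/\al_k}\rho^{\be_k/\al_k}$ and $\e^{\al_1-2}\sim\rho^{\be_1+1}$, so that every item becomes $C\,\rho^{E_p}$ with $E_p$ affine in $p$, and it remains to check $E_p>0$ for $p$ near $1$. Evaluating at $p=1$ and invoking the recursions \eqref{ali1} and \eqref{bell1}, the identities \eqref{belm1o2} and \eqref{delmalo2}, the strict monotonicity $\be_l/\al_l<\be_{l-1}/\al_{l-1}$ (equivalently $\de_i/\de_j\to0$ for $i<j$), the normalization $\be_m=1$, the bounds $\al_j>2$, and $\e/\de_1\to0$ (i.e. $(\be_1+1)/(\al_1-2)>\be_1/\al_1$), one finds $E_1>0$ in every case: for instance the leading contribution of $(v)$ gives $E_1=(1+\tau)\big(\tfrac12+\be_j/\al_j\big)$, and in $(iii)$, $(iv)$ the fast decay $(\de_i/\de_j)^{\al_i p}$, resp. $(\de_j/\de_i)^{\al_i p}$, strictly beats the endpoint factor because of the gap between consecutive ratios $\be_k/\al_k$. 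Since there are finitely many items and finitely many indices $j=1,\dots,m$ and $E_p$ depends continuously on $p$, I would finally fix $p_0>1$ so small that $\min E_p>0$ for $1<p\le p_0$ and set $\eta_p=\tfrac1p\min E_p>0$; for item $(ii)$ I would add that the weight $\rho^{\eta}$ carries a fixed positive power ($\eta>0$ from Lemma~\ref{expaU}), while the only negative contribution, coming from $\de_j^{2-2p}$, is of order $p-1$ and hence negligible near $p=1$.

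The hard part is precisely this last bookkeeping for the non-integrable pieces $(iii)$–$(vi)$: one must check that the divergent annulus-endpoint factor is always dominated by the algebraic prefactors, which is exactly the computation that dictated the choices of $\al_i$, $\be_i$, $d_i$ and $\e$ in Section~\ref{sec2}; the integrable items $(i)$, $(ii)$ are then routine by comparison with the corresponding integrals over $\R^2$.
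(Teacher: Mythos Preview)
Your strategy matches the paper's proof almost verbatim: pass to polar coordinates on each rescaled annulus, bound the integrand by pure powers via the split at $|y|=1$, pick up endpoint contributions $(r_j^\pm)^{c+2}$ when the radial integral diverges, substitute $\de_k\sim\rho^{\be_k/\al_k}$ and $\e^{\al_1-2}\sim\rho^{\be_1+1}$, and then check that the resulting exponent is positive at $p=1$ using the structural relations of Section~\ref{sec2}.

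One caveat, though: your illustrative computation for $(v)$ is not right, and this is exactly where the delicate part lives. The integrand in $(v)$ grows like $|y|^{(\al_j+2)\tau p}$ at infinity, so the integral is \emph{never} bounded on $\frac{A_j}{\de_j}$; the upper endpoint always contributes a factor $(\de_{j+1}/\de_j)^{1+p\tau+\al_jp\tau/2}$, and when $(\al_j-2)\tau p>2$ the lower endpoint contributes $(\de_{j-1}/\de_j)^{1+p\tau-\al_jp\tau/2}$. Hence the exponent at $p=1$ is not $(1+\tau)(\tfrac12+\be_j/\al_j)$ but a three-term combination involving $\be_{j\pm1}/\al_{j\pm1}$ as well, and its positivity genuinely requires the pairwise recursions \eqref{ali1} and \eqref{bell1} linking $(\al_{j-1},\be_{j-1})$ or $(\al_{j+1},\be_{j+1})$ to $(\al_j,\be_j)$ --- the paper carries these out explicitly (and the same for $(vi)$ with $\tau$ replaced by $1/\tau$). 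Your outline names the right tools but skips this step; be aware that the mere monotonicity of $\be_k/\al_k$ is not enough here.
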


\begin{proof}[\dem]
It is readily checked that for any $1<p$ and any $j=1,\dots,m$
$$\de_j^{2-p} \int_{A_j\over \de_j } \lf | \frac{|y|^{\al_j-1} }{ (1 +|y|^{\al_j})^{2}}\rg|^p\,dy=O(\de_j^{2-p})=O(\rho^{(2-p)\be_j/\al_j})$$
and
$$\rho^\eta\de_j^{2-2p}\int_{A_j\over \de_j}\lf | \frac{|y|^{\al_j-2} }{(1 +|y|^{\al_j})^{2}}\rg|^p\,dy=O(\rho^\eta \de_j^{2-2p})=O(\rho^{\eta+(2-2p)\be_j/\al_j}).$$
Furthermore, fixing $j=1,\dots,m$ for any $i<j$ we have that
$$ \int_{A_j\over \de_j}\lf | \frac{ 1  }{ |y|^{\al_j+2} }\rg|^p\,dy=O\bigg( \Big({\de_{j-1}\over \de_j}\Big)^{1-p-{\al_ip\over 2}}\bigg).$$
Hence, there exist $\eta_p>0$ such that $ \de_j^{2-2p}\big({\de_i\over \de_j}\big)^{\al_ip}\big({\de_{j-1}\over \de_j}\big)^{1-p-{\al_i\over 2}}=O(\rho^{p\eta_p}),$
since for $p=1$ we have that
$$ \Big({\de_i\over \de_j}\Big)^{\al_i}\Big({\de_{j-1}\over \de_j}\Big)^{-{\al_i\over 2}}=\Big({\de_i\over \de_{j-1}}\Big)^{\al_i}\Big({\de_{j-1}\over \de_j}\Big)^{{\al_i\over 2}}=O\big(\rho^{\al_i({\be_i\over \al_i}-{\be_{j-1}\over \al_{j-1}}) + {\al_i\over 2}({\be_{j-1}\over \al_{j-1}} - {\be_j\over \al_j})}\big)$$
and $ \al_i\Big({\be_i\over \al_i}-{\be_{j-1}\over \al_{j-1}}\Big) + {\al_i\over 2}\Big({\be_{j-1}\over \al_{j-1}} - {\be_j\over \al_j}\Big)>0$, in view of $i\le j-1<j$ and ${\be_l\over \al_l}$ is decreasing in $l$. Similarly, for $j<i$ we have that
\begin{equation*}
\begin{split}
\de_j^{2-2p}\Big({\de_j\over \de_i}\Big)^{\al_i p} \int_{A_j\over \de_j}\lf |  |y|^{\al_j-2} \rg|^p\,dy&=O\bigg(\de_j^{2-2p}\Big({\de_j\over \de_i}\Big)^{\al_ip}\Big({\de_{j+1}\over \de_j}\Big)^{1-p+{\al_i\over 2}}\bigg)
\end{split}
\end{equation*}
and for $p=1$ we find that $ \big({\de_j\over \de_i}\big)^{\al_i}\big({\de_{j+1}\over \de_j}\big)^{{\al_i\over 2}}=\big({\de_{j+1}\over \de_i}\big)^{\al_i}\big({\de_{j}\over \de_{j+1}}\big)^{{\al_i\over 2}}=O\big(\rho^{\al_i({\be_{j+1}\over \al_{j+1}}-{\be_{i}\over \al_{i}}) + {\al_i\over 2}({\be_{j}\over \al_{j}} - {\be_{j+1}\over \al_{j+1}})}\big).$ Now, if $j$ odd we have that
$$\int_{A_j\over \de_j}\lf | \frac{(1 +|y|^{\al_j})^{2\tau}}{ |y|^{(\al_j-2)\tau} }\rg|^p\,dy=O\bigg(  \Big({\de_{j-1}\over \de_j}\Big)^{1+{p \tau} -{\al_jp \tau\over 2}} + \Big({\de_{j+1}\over \de_j}\Big)^{1+{p \tau}+{\al_jp\tau\over 2}} \bigg).$$
Then, we get that $\rho^{(1+\tau)p} \de_j^{2+2\tau p} \Big({\de_{j-1}\over \de_j}\Big)^{1+{p \tau} -{\al_jp \tau\over 2}} = O\big( \rho^{(1+\tau)p + {\be_j\over \al_j}(1+\tau p+{\al_jp\tau \over 2} ) + {\be_{j-1}\over \al_{j-1}} (1+p\tau - {\al_jp\tau \over 2})}\big),$
and for $p=1$ we find that
\begin{equation}\label{ap1}
1+\tau + {\be_j\over \al_j}\Big(1+\tau +{\al_j\tau \over 2} \Big) + {\be_{j-1}\over \al_{j-1}} \Big(1+\tau - {\al_j\tau \over 2}\Big)>0.
\end{equation}
Indeed, $\al_j={\al_{j-1}+2\over \tau}+2$ implies ${\al_j\be_{j-1}\tau\over 2\al_{j-1}}={\be_{j-1}\over 2} + {\be_{j-1}\over \al_{j-1}}(1+\tau).$ Also, $\be_{j-1}={\tau\be_j+\tau+1\over \tau}$ and \linebreak${1\over 2}(1+\tau) + {\be_j\over \al_j}(1+\tau)>0$ implies that
\begin{equation*}
\begin{split}
1+\tau+{\be_j\over\al_j}\Big(1+\tau+{\al_j\tau\over 2}\Big)&=1+\tau+{\be_j\over\al_j}(1+\tau)+{\be_j\tau\over 2}>{\be_j\tau\over 2} + {1\over 2}(1+\tau)\\
&\ge{\be_{j-1}\over 2}= {\al_j\be_{j-1}\tau\over 2 \al_{j-1}}-{\be_{j-1}\over \al_{j-1}}(1+\tau)=-{\be_{j-1}\over \al_{j-1}}\Big(1+\tau - {\al_j\tau \over 2}\Big)
\end{split}
\end{equation*}
and \eqref{ap1} follows. Similarly, we get that
$$\rho^{(1+\tau)p} \de_j^{2+2\tau p} \Big({\de_{j+1}\over \de_j}\Big)^{1+{p \tau} +{\al_jp \tau\over 2}} = O\big( \rho^{(1+\tau)p + {\be_j\over \al_j}(1+\tau p-{\al_jp\tau \over 2} ) + {\be_{j-1}\over \al_{j-1}} (1+p\tau + {\al_jp\tau \over 2})}\big),$$
and for $p=1$,  $ 1+\tau + {\be_j\over \al_j}\big(1+\tau -{\al_j\tau \over 2} \big) + {\be_{j-1}\over \al_{j-1}} \big(1+\tau +{\al_j\tau \over 2}\big)>0,$ by using that $ \al_j={\al_{j+1}-2\over \tau} -2$ and $\be_{j+1}=\tau \be_j -\tau -1$. Therefore, there exist $\eta_p>0$ such that
$$\rho^{(1+\tau)p} \de_j^{2+2p\tau }\int_{A_j\over \de_j}\lf | \frac{(1 +|y|^{\al_j})^{2\tau}}{ |y|^{(\al_j-2)\tau} }\rg|^p\,dy=O\big(\rho^{p\eta_p} \big).$$
Next, similar arguments as above lead us to obtain that for $j$ even, 
$$\ds 1+{1\over \tau} + {\be_j\over \al_j}\Big(1+{1\over \tau} +{\al_j\over 2\tau } \Big) + {\be_{j-1}\over \al_{j-1}} \Big(1+{1\over \tau}- {\al_j\over 2\tau}\Big)>0,$$ 
by using $\al_j=(\al_{j-1}+2)\tau +2$ and $\be_{j-1}={\be_j+\tau+1\over \tau}$. Also, by using that $\al_j=(\al_{j+1}-2) \tau -2$ and $\be_{j}=\tau \be_{j+1} +\tau +1$ it follows that
$ 1+{1\over \tau} + {\be_j\over \al_j}\big(1+{1\over \tau} - {\al_j\over 2\tau } \big) + {\be_{j+1}\over \al_{j+1}} \big(1+{1\over \tau}+ {\al_j\over 2\tau}\big)>0.$
Thus, there exist $\eta_p>0$ such that
$$\rho^{(1+1/\tau)p} \de_j^{2+2p/\tau }\int_{A_j\over \de_j}\lf | \frac{(1 +|y|^{\al_j})^{2/\tau}}{ |y|^{(\al_j-2)/\tau} }\rg|^p\,dy=O\big(\rho^{p\eta_p} \big)$$
in view of
$$ \int_{A_j\over \de_j}\lf | \frac{(1 +|y|^{\al_j})^{2/\tau}}{ |y|^{(\al_j-2)/\tau} }\rg|^p\,dy=O\bigg(\Big({\de_{j-1}\over \de_j}\Big)^{1+{p\over \tau} -{\al_jp\over 2\tau}} + \Big({\de_{j+1}\over \de_j}\Big)^{1+{p\over\tau}+{\al_jp\over 2\tau}} \bigg).$$
This completes the proof.
\end{proof}

\end{appendix}

\ms
\begin{center}
{\bf Acknowledgements}
\end{center}

\noindent The author would like to thank to Professor Angela Pistoia (U. Roma ``La Sapienza'', Italy) for pointing out him problem \eqref{mfsh}. Moreover, the author would like to express his gratitude to Professors Angela Pistoia and Pierpaolo Esposito (U. Roma Tre, Italy) for many stimulating discussions about this problem and related ones. This work has been supported by grant Fondecyt Regular N$^\text{o}$ 1201884, Chile.


\small

\end{document}